\documentclass{amsart}
\usepackage[fontsize=12pt]{scrextend}
\usepackage{m9macros}
\usepackage{mymacros}
\usepackage{tikz}
\usetikzlibrary{spy}
\usepackage{tkz-euclide}
\usepackage{tikz-3dplot}
\usepackage{graphicx}
\usepackage{float}
\usepackage{enumerate}
\usepackage[english]{babel}
\usepackage{longtable}
\usepackage{tabularx}
\newcolumntype{C}[1]{>{\centering\arraybackslash}p{#1}}
\newcolumntype{L}[1]{>{\raggedright\arraybackslash}p{#1}}
\graphicspath{ {./images/} }
\usepackage[ruled,vlined]{algorithm2e}
\usepackage{cleveref}
\usepackage[utf8]{inputenc}
\usepackage{csquotes}
\usepackage[backend=biber,style=numeric,firstinits=true]{biblatex}
\usepackage{fancyhdr}
\usepackage{amsthm}
\usepackage{nameref}

\usepackage{placeins}
\graphicspath{ {figures/} }
\title[Minimization of the inradius in Minkowski spaces]{Minimization of the inradius of convex bodies for prescribed diameter and circumradius in Minkowski spaces}
\author[R. Brandenberg]{René Brandenberg}
\author[B. González]{Bernardo González Merino}
\author[M. Runge]{Mia Runge}
\begin{document}
\keywords{Convex sets, Blaschke–Santaló diagram, Geometric inequalities, Complete
 system of inequalities, Inradius, Circumradius, Diameter}
 \thanks{2020 Mathematics Subject Classification. 	52A40  ;	52A10, 52A15, 52A20 .
This research has been funded by the PID2022-136320NB-I00 project / AEI/10.13039/501100011033/ FEDER, UE}

\maketitle

\newtheorem{theorem}{Theorem}[section]
\newtheorem{lemma}[theorem]{Lemma}
\newtheorem{proposition}[theorem]{Proposition}
\newtheorem{corollary}[theorem]{Corollary}
\newtheorem{example}[theorem]{Example}
\newtheorem{problem}[theorem]{Problem}
\newtheorem{question}[theorem]{Question}
\theoremstyle{definition}
\newtheorem{definition}[theorem]{Definition}
\newtheorem{remark}[theorem]{Remark}
\newtheorem*{definition*}{Definition}
\newtheorem*{proposition*}{Proposition}
\section*{Abstract}
We study Blaschke-Santaló diagrams for the inradius, circumradius, and diameter in arbitrary-dimensional Minkowski spaces. Their boundary parts are regularly described by four linear inequalities and one more complex inequality, and we analyze which bodies fill the
boundaries and for which (types of) norms that boundaries collapse to a single point. In this context, the diagram with respect to the $1$-norm in 3-space plays an exceptional role. We complete the description of the according diagram by tightening Bohnenblust's inequality through involvement of the inradius. 

\section{Introduction}
\label{sec:intro}

For $n \ge 2$ let $\CK^n$ be the set of \cemph{red}{convex bodies} (\ie~non-empty, convex, and compact sets) in $\R^n$ and $\M^n=(\R^n,\norm{\cdot})$ an \cemph{red}{$n$-dimensional Minkowski space} with the according unit ball $\B$. The  \cemph{red}{Euclidean} space is denoted by $\E^n:=(\R^n,\norm{\cdot}_2)$. For any $K\in\CK^n$, let $R(K)$ be the \cemph{red}{circumradius}, 
$r(K)$ the \cemph{red}{inradius}, and  
$D(K)$ 
the  \cemph{red}{diameter} of $K$. 

The mapping $f_{\M^n}: \CK^n\to [0,1]^2$ is defined by 
\begin{equation*}
     f_{\M^n}(K)=\left(\frac{r(K)}{R(K)}, \frac{D(K)}{2R(K)}\right).
\end{equation*}
Its image $f_{\M^n}(\CK^n)$ is called the \cemph{red}{Blaschke-Santaló diagram} for the inradius, diameter, and circumradius or simply the \cemph{red}{($r,D,R$)-diagram} (\wrt~the Minkowski space $\M^n$). For the sake of brevity, we 
write $f$ instead of $f_{\M^n}$ if we talk about a general Minkowski space $\M^n$ or if $\M^n$ is clear from the context.

Our goal is to understand the general structure of $f(\mathcal K^n)$. It is well known that $f(\mathcal K^n)$ does not have holes \cite[Lemma 2.3]{ngons}, thus it suffices to describe its boundaries. 
This is obtained by providing a \cemph{red}{complete system of inequalities}. Here, we say that a system of valid inequalities is complete if, for every tuple $(r,D,R)$ that fulfills all inequalities, the existence of a $K\in\CK^n$ with $r(K) = r$,  $D(K) = D$, and $R(K) = R$ is guaranteed. A complete system of inequalities for the inradius, diameter, and circumradius for Euclidean 2-space is presented in \cite{santalo} and for Euclidean 3-space in \cite{BGR_euclidean}. General (even non-symmetric) norms for $(r,D,R)$-diagrams are considered (mostly for the planar case) in \cite{ngons}. 

It turns out that the boundary of $f(\mathcal K^n)$ can regularly be described by five geometric inequalities (\cf~\Cref{fig:generalshape}).
The first boundary is the \cemph{red}{symmetry boundary} (upper boundary), induced by the inequality
\begin{equation}\label{eq:upper}
    D(K) \leq 2R(K).
\end{equation}
All symmetric $K$ (and all those who possess an antipodal pair of points in their circumsphere) reach equality in \eqref{eq:upper}. 
Furthermore, it has been shown in \cite[Theorem 1.5]{ngons} that all $K\in\CK^n$ fulfill $D(K)=2R(K)$ if and only if $\B$ is a parallelotope. 

The second part of the boundary is the \cemph{red}{completeness boundary} (right boundary), induced by the \cemph{red}{concentricity inequality} \cite{CaspaniPapini}
\begin{equation}\label{eq:right}
    r(K) + R(K) \leq D(K).
\end{equation}
All (diametrically) complete sets reach equality in \eqref{eq:right}. 

The lower bound on the diameter-circumradius ratio depends on the chosen Minkowski space. The \cemph{red}{Jung constant} 
is defined as $j(\M^n):=\max\{R(K)/D(K), K\in\CK^n\}$.
Thus, the diameter-circumradius ratio lies in the interval $[1/j(\M^n),2]$ and the \cemph{red}{Jung inequality}
\begin{equation}
\label{eq:generalJung}
    R(K)\leq j(\M^n) D(K)
\end{equation}
induces the \cemph{red}{Jung boundary} (lower boundary).

The left-hand side of the diagram is usually split into two parts. 
The first is induced by the trivial inequality $r(K)\geq 0$. It is called the \cemph{red}{lower dimensional sets inequality}, or $LDSI$ for brevity, since it is fulfilled with equality if and only if $K$ is subdimensional. The other part is again described by an inequality that depends on the chosen Minkowski space and an exact expression cannot be given in general. It is called the \cemph{red}{isosceles simplices inequality} or $ISI$ since in Euclidean 2- and 3-space exactly isosceles simplices reach equality \cite{BGR_euclidean,santalo}. Here, we call a simplex \cemph{red}{isosceles} if at most one edge is not of diametrical length and if all edges are of diametrical length, the simplex is called \cemph{red}{diametric}. Since needed in our proves, even for general Minkowski spaces, diametric simplices in Euclidean spaces are called \cemph{red}{regular} and regular $n$-simplices \cemph{red}{centered}, if the Euclidean distance of the vertices to the origin is constant. 

In many cases, such as in $\E^n$ for $n\geq 4$, the $ISI$ is the only unknown 
inequality. Therefore, the main focus of the paper lies in the understanding of the left boundary of these diagrams, or, in other words in the behavior of the \emph{minimization} of the inradius for prescribed values of the circumradius and the diameter.

 Obviously, segments are always mapped to $(0,1)$. The first of our main results concerns in which Minkowski spaces all subdimensional bodies are mapped to $(0,1)$.
\begin{theorem}
\label{thm:rbound}
    Let $\M^n$ be a Minkowski space. 
    Every lower dimensional set $K\in \CK^n$ fulfills $D(K)=2R(K)$ 
    if and only if 
    \begin{enumerate}[i)]
        \item $n=2$ or $\B$ is a parallelotope, or
        \item $n=3$ and $\B$ is a cross-polytope. 
    \end{enumerate}
\end{theorem}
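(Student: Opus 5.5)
The plan is to prove the two directions separately. The ``if'' direction reduces to a linear feasibility problem for the cross-polytope. The ``only if'' direction needs an explicit subdimensional counterexample. Throughout, write $\rho:=D(K)/2$. By \eqref{eq:upper} we always have $R(K)\ge\rho$, so the task in each case is to decide whether $K\subset c+\rho\B$ for some $c\in\R^n$.

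\emph{Sufficiency.} If $n=2$, a lower dimensional $K$ is a point or a segment $[x,y]$. In the latter case $K\subset \frac{x+y}{2}+\frac{\norm{x-y}}{2}\B$, which gives $D(K)=2R(K)$. If $\B$ is a parallelotope, the claim is the already known statement \cite[Theorem 1.5]{ngons}.

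The essential case is $n=3$ with $\B=\{x:\abs{\langle u,x\rangle}\le 1,\ u\in\{\pm1\}^3\}$ a cross-polytope. Choose four representatives $u_1,\dots,u_4$ of the facet normals, one from each antipodal pair, such that $u_1+u_2+u_3+u_4=0$, for instance $(1,1,1),(1,-1,-1),(-1,1,-1),(-1,-1,1)$. Then $K\subset c+\rho\B$ is equivalent to
\[
h_K(u_i)-\rho\le\langle c,u_i\rangle\le \rho-h_K(-u_i),\qquad i=1,\dots,4.
\]
Each $u_i$ has dual norm $1$, so the width condition $h_K(u_i)+h_K(-u_i)\le D(K)$ holds and every interval is nonempty. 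Any three of the $u_i$ are linearly independent and their only relation is $\sum u_i=0$. Hence the system is solvable if and only if $0$ lies in the Minkowski sum of the four intervals, that is, if and only if
\[
\sum_{i=1}^4 h_K(u_i)\le 2D(K)\quad\text{and}\quad\sum_{i=1}^4 h_K(-u_i)\le 2D(K).
\]
For full-dimensional $K$ this may fail (for example for a suitable regular-like tetrahedron), so planarity of $K$ must be used here.

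I expect this step to be the main obstacle. For $K$ contained in a plane $H$, only the orthogonal projections $\bar u_i$ of the $u_i$ onto $H-H$ matter. These four vectors still sum to $0$ in a $2$-dimensional space. My first attempt is to choose $x_i\in K$ with $\langle x_i,u_i\rangle=h_K(u_i)$. I would then pair the indices as $\{i,j\},\{k,l\}$ according to the cyclic angular order of the $\bar u_i$, which is possible because four vectors summing to zero in the plane can be ordered cyclically. With this pairing I would try to bound
\[
\sum_{i=1}^4\langle x_i,u_i\rangle
\]
by $\norm{x_a-x_b}+\norm{x_c-x_d}\le 2D(K)$ for a suitable matching. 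This rests on $\norm{x}=\max_i\abs{\langle x,u_i\rangle}$ and on $\bar u_i+\bar u_j=-(\bar u_k+\bar u_l)$.

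\emph{Necessity.} Assume that neither i) nor ii) holds. Then $n\ge3$ and $\B$ is not a parallelotope, and if $n=3$ also not a cross-polytope. I would construct a hyperplanar $K$ with $D(K)<2R(K)$, aiming for a triangle (or a simplex of dimension $\le n-1$) whose edges all have length $D$ but which lies in no translate of $\frac D2\B$. By \cite[Theorem 1.5]{ngons}, non-parallelotopes admit bodies with $D<2R$. The point is to push such an example into a hyperplane, using that $\B$ has ``too many'' facet directions relative to $n$.

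For $n=3$, I would use the same support-function reformulation as above. If $\B$ is not a parallelotope, some central section direction yields at least three pairs of non-coplanar constraints, which a planar equilateral triangle cannot satisfy simultaneously. The cross-polytope is exactly the case in which the dependency $\sum u_i=0$ saves feasibility, and this is where I would use that $\B$ is not a cross-polytope. For $n\ge4$, I would embed the $3$-dimensional obstruction, for example a planar triangle or a $3$-simplex, inside a hyperplane, using a $3$-dimensional section or projection of $\B$ that is neither a parallelepiped nor something that rescues feasibility.
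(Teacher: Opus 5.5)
\textbf{Necessity has no workable argument.} Your ``only if'' plan relies on ``too many facet directions'' and a ``central section direction yielding three pairs of non-coplanar constraints''. Both presuppose a polytopal $\B$, which is not given: a smooth, strictly convex $\B$ has no facets. The claim that the cross-polytope is ``exactly the case in which the dependency saves feasibility'' does not follow from your sketch; it is the deep content of the statement. The reduction via $3$-dimensional sections or projections also fails. For a set in a subspace $L$, the circumradius in $\M^n$ can be strictly smaller than the circumradius with respect to $\B\cap L$, because the center may leave $L$. The octahedron shows this: its section orthogonal to $(1,1,1)$ is a hexagon, in which planar triangles with $D<2R$ exist, yet in $\M^3$ every planar set has $D=2R$. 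So a lower-dimensional obstruction does not transfer.

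The missing idea is the one the paper uses. For $m$ points, $D(S)=2R(S)$ for every $S=\conv\{x^1,\dots,x^m\}$ is equivalent to the $(m.2)$-intersection property of $\B$. Indeed, $D\le 2$ means the translates $x^i+\B$ intersect pairwise, and $R\le1$ means they have a common point. For $n\ge4$ every $4$-point set is subdimensional, which gives the $(4.2)$-IP and hence a parallelotope. For $n=3$, triangles give the $(3.2)$-IP, and Hanner's classification leaves only the parallelepiped and the octahedron. Without these characterizations, or a genuine replacement, necessity is unproved.

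\textbf{Sufficiency: the central step is missing, and the bound you propose fails as stated.} Your reduction to $\sum_i h_K(\pm u_i)\le 2D(K)$ is correct. It is a nice alternative to the paper's route, which takes touching points from optimal containment and then shows by a half-space case analysis that four touching points on pairwise non-opposite facets cannot be coplanar. However, you leave the planar inequality unproved. Moreover, $\sum_i\langle x_i,u_i\rangle\le\norm{x_a-x_b}+\norm{x_c-x_d}$ can fail for admissible maximizers. Take
\[
x_1=(1,1,1),\quad x_2=(2,\tfrac12,\tfrac12),\quad x_3=(\tfrac12,2,\tfrac12),\quad x_4=(\tfrac12,\tfrac12,2).
\]
These lie in the plane $\{y:\langle y,u_1\rangle=3\}$, and $x_i$ maximizes $u_i$ over $K=\conv\{x_1,\dots,x_4\}$. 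They give $\sum_i\langle x_i,u_i\rangle=6=2D(K)$, yet every matching yields only $2+3=5$.

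One way to close this step is as follows. Choose the $x_i$ as extreme points of $K$. Consider each of the three pairings $\{p,p'\}\,|\,\{q,q'\}$; for each, $u_p+u_{p'}=-(u_q+u_{q'})$ is $\pm2$ times a coordinate vector. Use the identity
\[
\sum_i\langle x_i,u_i\rangle=\langle x_p-x_{p'},u_p\rangle+\langle x_q-x_{q'},u_q\rangle+\langle x_{p'}-x_{q'},u_p+u_{p'}\rangle .
\]
The first two terms are at most $D(K)$. So the inequality can fail only if, for every pairing and every choice of roles within the pairs, $\langle y-z,u_p+u_{p'}\rangle>0$ for $y\in\{x_p,x_{p'}\}$ and $z\in\{x_q,x_{q'}\}$. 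Then three coordinate functionals would strictly separate all three $2+2$ splittings of four distinct points. This is impossible for four extreme points of a planar convex set, because the diagonals of a convex quadrilateral cross. Note that the matching is dictated by the position of the maximizers, not by the angular order of the $\bar u_i$. The inequality for $-u_i$ then follows by applying the argument to $-K$.
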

While $i)$ is obvious or well-known \cite{ngons}, $ii)$ states that the 3-dimensional cross-polytope plays a very unique role in this question. Even more, the proof of the theorem reveals an interesting connection to Hanner-polytopes \cite{hanner, hansenlima}. In order to prove $ii)$, we show in \Cref{lem:32ipsymmetric} that $\B$ is a Hanner-polytope (which is always symmetric) if and only if all planar triangles have diameter-circumradius ratio 2 within $\M^n$. 

The 3-dimensional cross-polytope is special not only because of \Cref{thm:rbound}. It also belongs to the exceptional cases of possible unit balls of a Minkowski space where the corresponding Jung inequality becomes redundant (\cf~\Cref{prop:jungboundcollaps}). 
This implies that only three inequalities suffice to completely describe the diagram $f(\CK^3)$ in this setting (c.f.~Figure \ref{fig:c3diagram}). Moreover, the $ISI$ turns out to be linear, which seems to be quite unique, too. 

 \begin{figure}[H]
    \centering
    \begin{tikzpicture}[scale=4]

     \draw[->] (-0.1,0) -- (1.1,0) node[right] {$\frac{r}{R}$};
     \draw[->] (0,-0.1) -- (0,1.2) node[above] {$\frac{D}{2R}$}
     ;

     \draw[ domain=0:1, smooth, variable=\x, color=black] plot ({\x}, {1})
     ;
     \draw[ domain=0.333:1, smooth, variable=\x] plot ({\x}, {0.5*\x+0.5})
     ;
     \draw[name path=F2, domain=0:0.333, smooth, variable=\x] plot ({\x},{-\x+1} ); 
     \draw[ domain=0:0.333, dotted, variable=\x] plot ({\x}, {0.666})
     ;
       \draw[ domain=0:1, dotted, variable=\y] plot ({1}, {\y})
     ;
      \node[label=below:$1$] (one) at (1,0) {.};
     \draw[ domain=0:0.666, dotted, variable=\y] plot ({0.333}, {\y})
     ;
      \node[label=below:$\frac{1}{3}$] (max) at (0.333,0) {.};
      \node[label=left:$1$] (delta) at (0,1) {.};
      \node[label=left:$\frac{2}{3}$] (rho) at (0,0.666) {.};

     \draw[fill=black] (1,1) circle[radius=0.4pt];

     \draw[fill=black] (0,1) circle[radius=0.4pt];
     \draw[fill=black] (0.333,0.666) circle[radius=0.4pt];

 \end{tikzpicture}
    \caption{The $(r,D,R)$-diagram with a cross-polytopal unit ball 
    in 3-space.}
    \label{fig:c3diagram}
\end{figure}

\begin{theorem}
\label{thm:c3diagram}
    Let $\M^3$ be a Minkowski space whose unit ball $\B$ is a $3$-dimensional cross-polytope and let $K\in \CK^3$. Then, 
    \begin{align*}
 D(K) \leq 2R(K),\quad 
    r(K)+R(K) \leq D(K), \quad\text{and}\quad  2(R(K)-r(K)) \leq D(K).
    \end{align*}
    Moreover, the three inequalities build a complete system for $f(\CK^3)$.
\end{theorem}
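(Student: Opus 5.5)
The first two inequalities are the general symmetry inequality \eqref{eq:upper} and the concentricity inequality \eqref{eq:right}. Both hold in every Minkowski space, so the real content is the third inequality $2(R(K)-r(K))\le D(K)$ together with completeness.

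For the third inequality, write $\B=\{x:\norm{x}_1\le 1\}$. Since the unit ball is a polytope with 8 facets, the inradius has a facet description: $r(K)\ge \rho$ iff some translate of $\rho\B$ fits in $K$. The circumradius of $K$ in the $1$-norm has a dual description via the octahedron $\B$. Indeed, $x+R\B\supseteq K$ iff for every sign vector $s\in\{\pm1\}^3$ we have $\max_{y\in K}s^T(y-x)\le R$. Put $h_s:=h_K(s)$, the support function. Then
\[
R(K)=\min_x\max_s \big(h_s-s^Tx\big),
\]
and $D(K)=\max_s(h_s+h_{-s})$ when the width is measured in the dual ($\infty$) norm directions. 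The diameter is harder: in $\ell_1$ the diameter equals $\max_{y,z}\norm{y-z}_1=\max_s(h_s+h_{-s})$, since $\norm{v}_1=\max_s s^Tv$. So both $R$ and $D$ are determined by the 8 numbers $h_s$, i.e.\ by the octant-type polytope $P=\{x: s^Tx\le h_s\}$, which is an octahedron-like polytope containing $K$. For the inradius I would use the trivial bound $r(K)\le r(P)$, because $K\subseteq P$ and $P$ has the same $R$ and $D$ as $K$ by the support-function description. So it suffices to prove $2(R(P)-r(P))\le D(P)$ for polytopes $P$ whose facet normals lie among the 8 sign vectors. This reduces the problem to inequalities in the 8 parameters $h_s$: $r(P)$ is the largest $\rho$ with $x+\rho\B\subseteq P$, where $\rho\B$ has vertices $\pm\rho e_i$.

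This reduction leads to a linear program. I would pair antipodal sign vectors into 4 pairs with widths $w_s=h_s+h_{-s}$, so $D=\max w_s$. For the optimal center $x$, the value $R$ is attained on several $s$ (minimax/Helly). I expect $R$ to be determined by a few of the $h_s$, in a simplex-like pattern where four mutually "non-antipodal" constraints are active. Then I would exhibit an explicit inscribed cross-polytope of radius $R-D/2$, i.e.\ show that a suitable center admits $\rho=R-D/2$. The key step is to choose the center as the midpoint-type point balancing the pairs, and to check each vertex $x\pm\rho e_i$ against each constraint $s$, using $h_s\ge R-\ldots$ and $w_s\le D$. This vertex-versus-facet case check, organized via the minimax optimality conditions for $R$, is the main obstacle. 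The case where $K$ is lower-dimensional must be consistent with it: then $r=0$ and \Cref{thm:rbound} gives $D=2R$, so the inequality holds.

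For completeness, the region cut out by the three inequalities is the triangle with vertices $(0,1)$, $(1,1)$, $(1/3,2/3)$, as in \Cref{fig:c3diagram}. Since $f(\CK^3)$ has no holes, it suffices to show the three edges are attained. The edge on $D=2R$ is attained by the symmetric bodies $\B+\lambda[-e_1,e_1]$ or similar, whose ratio $r/R$ varies continuously from $0$ to $1$. The edge $r+R=D$ is attained by complete bodies: I would take intersections of $\B$ with translates filling the segment from $(1,1)$ to $(1/3,2/3)$, e.g.\ by a continuous deformation from $\B$ to the simplex $\mathrm{conv}\{e_1,e_2,e_3,-(e_1+e_2+e_3)/?\}$-type body. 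For the third edge, I would take a body realizing $(1/3,2/3)$, namely the simplex $T=\mathrm{conv}\{\pm e_1... \}$ with three of the four sign-facets, and interpolate it with a segment via $\mathrm{conv}(T\cup\text{segment})$ or by shrinking $T$ toward a facet. The last step is to verify by direct computation, using the same support-function formulas, that the equality $2(R-r)=D$ is preserved along the family.
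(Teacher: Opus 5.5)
\textbf{There is a genuine gap: your reduction for the third inequality runs in the wrong direction.}

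The claim $2(R(K)-r(K))\le D(K)$ is a \emph{lower} bound $r(K)\ge R(K)-\tfrac{D(K)}{2}$. The polytope $P=\{x: s^{\top}x\le h_K(s),\ s\in\{\pm1\}^3\}$ does have the same $R$ and $D$ as $K$. But $K\subseteq P$ only yields $r(K)\le r(P)$, so even a full proof of $r(P)\ge R(P)-\tfrac{D(P)}{2}$ tells you nothing about $K$. The eight numbers $h_K(s)$ determine $R$ and $D$ but not $r$. The inradius depends on the facets of $K$ itself, whose normals are in general not sign vectors.

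Take the extremal simplices $S_D$ of Configuration~1, $D\in[\tfrac43,2]$. There $h_{S_D}(s)=1$ for $s\in\{(1,1,1),(-1,-1,1),(-1,1,-1),(1,-1,-1)\}$ and $h_{S_D}(s)=D-1$ for their negatives. Hence $r(P)=D-1$, while $r(S_D)=1-\tfrac D2<D-1$ for $D>\tfrac43$. So your surrogate is slack exactly where the statement is tight.

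What you need is a reduction to a \emph{subset} of $K$ with the same circumradius and no larger diameter or inradius. The paper gets this from \Cref{prop:opt} and \Cref{thm:rbound}: if $D(K)<2R(K)$, four touching points span a simplex $\tilde S\subseteq K$ that is still optimally contained in $\B_1$. The real work is then on such simplices:
\begin{itemize}
\item \Cref{lem:areavertices} confines each vertex to a small triangle $1-\tfrac D2\le|p^i_j|\le D-1$ on one of four pairwise non-opposite facets.
\item Every simplex with vertices in these triangles stays optimally contained, with diameter at most $D$.
\item \Cref{prop:quasiconc} pushes the vertices to corners of the triangles without increasing the inradius.
\item The six resulting configurations are computed explicitly, with minimum inradius $1-\tfrac D2$.
\end{itemize}
None of this appears in your plan. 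Even for $P$ you leave the decisive vertex-versus-facet check open.

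\textbf{Completeness.} Your structure is fine: star-shapedness reduces everything to the three edges, and $\B_1+\lambda[-u^1,u^1]$ does fill the top edge. However, you do not pin down the body at $(\tfrac13,\tfrac23)$. Your family for the edge $2(R-r)=D$ is only a guess, and equality along it is unverified. The paper proceeds as follows. It takes $T$ to be the simplex whose vertices are the centres of four pairwise non-opposite facets of $\B_1$. It fills the completeness edge by $(1-\lambda)T+\lambda\B_1$ via \Cref{prop:starshaped}. It fills the third edge by the simplices $S_D$ above, which have incenter $0$, $R=1$, diameter $D$ and $r=1-\tfrac D2$, and run from $S_{4/3}=T$ to a segment at $D=2$.
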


For every $D\in[1/j(\M^n),2]$ let $r_D:=\min\{r(K): K \in \CK^n\text{ with }R(K)=1, D(K)=D\}$. We call a point $(r_D,D/2)$, $D\in[1/j(\M^n),2]$, \cemph{red}{left-most} and we say that $K\in\CK^n$ is mapped to a left-most point if $(r(K)/R(K),D(K)/2R(K))=(r_D,D/2)$ for some $D\in[1/j(\M^n),2]$. The left-most points might not completely describe the $ISI$ but since every point in the diagram can be horizontally connected to the completeness boundary, the $ISI$ is fully characterized by the left-most points (\cf~\Cref{rem:horizontalisi}).

One may notice that the set of points $(r(K)/R(K),D(K)/2R(K))$ given by the convex bodies attaining equality in the $ISI$ 
describes a non-increasing curve and the induced boundary is filled by isosceles triangles or simplices in all known $(r,D,R)$-diagrams \cite{ngons,BGR_euclidean, santalo}. Our third result shows that this part of the boundary is always non-increasing and that every left-most point on the $ISI$ is, at the very least, the image of a simplex.

\begin{theorem}
    \label{thm:left-mostpoint-ndim}
    Let $\M^n$ be a Minkowski space.
    \begin{enumerate}[i)]
        \item For every left-most $(r_D,D/2)$, $D\in[1/j(\M^n),2]$, there exists a subdimensional set or a simplex $S$ such that $R(S)=1$, $D(S)=D$, and $r(S)=r_D$. 
        \item  The set of left-most points $\{(r_D,D/2):D\in[1/j(\M^n),2]\}$ is non-increasing, \ie, if $1/j(\M^n)\leq D_1<D_2\leq 2$, then $r_{D_2} \leq r_{D_1}$.
    \end{enumerate}
\end{theorem}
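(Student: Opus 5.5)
For part i), take a minimizer $K$ of $r_D$; it exists by Blaschke selection, since $r$, $D$, $R$ are continuous and the constraint set is compact after translation. We may assume $K$ is circumscribed by $\B$, i.e.\ $K\subset \B$ and $R(K)=1$. We then use the standard characterization of the circumradius: $K\subset c+R\B$ is optimal iff there are points $p_1,\dots,p_k\in K\cap \mathrm{bd}(\B)$, $k\le n+1$, with outer normals $a_i$ of $\B$ at $p_i$ satisfying $0\in\mathrm{conv}\{a_i\}$. Likewise, the diameter $D(K)=D$ is attained by a pair $q,q'\in K$ with $\|q-q'\|=D$.

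Consider $S:=\mathrm{conv}\{p_1,\dots,p_k,q,q'\}$. Then $S\subset K$, so $r(S)\le r(K)$ and $D(S)\le D(K)=D$. Since $q,q'\in S$ we get $D(S)=D$, and since the touching points $p_i$ still certify optimality of $\B$ we get $R(S)=1$. Minimality of $r_D$ then forces $r(S)=r_D$. However, $S$ has up to $n+3$ vertices and need not be a simplex.

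The main obstacle is reducing to at most $n+1$ points. First I try to merge $q,q'$ with the $p_i$: by Carathéodory in the normal cone we may take $k\le n+1$ touching points. If $k+2>n+1$, I would argue that one can choose a subset. Consider the point sets $P$ of size $\le n+1$ with $\mathrm{conv}P\subset K$, $R(\mathrm{conv}P)=1$ and $D(\mathrm{conv} P)=D$. An alternative route first selects a minimal set $T\subseteq\{p_i\}\cup\{q,q'\}$ still certifying $R=1$ and $D=D$. If $|T|>n+1$, the points are affinely dependent, so by Radon one point lies in the hull of the others. Removing it does not change the hull, and $q$ or $q'$ or some $p_i$ being interior to the hull of the rest keeps both certificates valid since the hull is unchanged. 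Hence $|T|\le n+1$, and the hull of $T$ is a simplex, possibly degenerate, in which case it is subdimensional. This gives i).

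For ii), given $D_1<D_2$, take the simplex or subdimensional set $S_1$ from i) for $D_1$ with $R(S_1)=1$ and $S_1\subset\B$. Let $x\in S_1$ and $y$ be a point of $\B$ with $\|x-y\|$ large. Since $\B$ has diameter $2$, and $D_2\le 2$, we can move continuously: take $K_t=\mathrm{conv}(S_1\cup\{z_t\})$ with $z_t$ moving from a vertex of $S_1$ to a point $-u\in\mathrm{bd}\B$ antipodal to some boundary point $u\in S_1$. Along this path $K_t\subset\B$ and it contains the touching points, so $R(K_t)=1$ throughout. $D(K_t)$ varies continuously from $D_1$ up to $2$, so some $t$ gives $D(K_t)=D_2$. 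The inradius may increase, though. To avoid this, instead of adding a point I replace a vertex: the rescaled set $\lambda S_1$ changes $R$ and $D$ equally, so scaling is useless. I therefore fall back on a diagram argument: the point $(r_{D_1},D_1/2)$ and the point $(0,1)$ (a segment) are both in $f(\CK^n)$. Taking Minkowski combinations $(1-\lambda)S_1+\lambda[-u,u]$ with $u\in S_1\cap\mathrm{bd}\B$ attaining normals, $R$ stays $1$, $D$ moves continuously to $2$, and $r$ is at most $(1-\lambda)r(S_1)+\lambda r([-u,u])$? That last inequality goes the wrong way (inradius is superadditive), so I expect the final argument to use the explicit vertex-pulling construction. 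I would estimate the inradius through the normals of the facets, showing that pulling a vertex outward along a touching direction only shrinks the inradius relative to $R$. This monotonicity estimate is the step I expect to be hardest.
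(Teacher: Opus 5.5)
Both parts have a genuine gap. In i) the reduction step is false: Radon's theorem does not say that one of $n+2$ affinely dependent points lies in the convex hull of the others. The four vertices of a square are affinely dependent, yet none is redundant. So you cannot discard points while keeping the hull unchanged.

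You do not use minimality of $K$ at this step, so the argument would have to work for arbitrary certificate sets, and it does not. In the Euclidean plane take the touching points $p_1=(1,0)$ and $p_{2,3}=(-\tfrac12,\pm\tfrac{\sqrt3}{2})$, all three of which are needed for $R=1$. Add $q=(0,0.99)$ and $q'=(0,-0.99)$. Then the hull of these five points is a pentagon with $R=1$ whose diameter $1.98$ is attained only by $[q,q']$. Each triangle $\conv\{q,q',p_j\}$ has circumradius about $0.99<1$, so no triangle of these points certifies both values.

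In general all $n+1$ touching points may be needed. The simplex they span then has the right circumradius but possibly strictly smaller diameter than $K$. No selection of vertices repairs this: one has to deform a simplex while controlling its inradius, which is the real content of the theorem. For ii) you give no argument at all. The ``monotonicity estimate'' is only postulated, and your attempts (adding a point, Minkowski combination with a segment) fail for the reasons you note yourself.

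The missing ingredient is the quasiconcavity of the inradius of simplices sharing a facet (\Cref{prop:quasiconc}, \Cref{cor:bdarea}), which the paper exploits in \Cref{lem:connectsimplices}.

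\begin{enumerate}
\item Start from the simplex $\tilde S$ of $n+1$ touching points. It satisfies $\tilde S\optc\B$, $r(\tilde S)\le r(K)$ and $D(\tilde S)\le D(K)$.
\item Move only the vertex $p^1$ along $\bd(\B)$, within the set $Q$ of boundary points having an outer normal in $\pos(\{-a^2,\dots,-a^{n+1}\})$. This keeps optimal containment automatically.
\item One shows there is a path from $p^1$ inside $\{p\in Q: r(\conv\{p,p^2,\dots,p^{n+1}\})\le r(\tilde S)\}$ to a point with a nonzero normal on the boundary of that cone. There optimal containment is already certified by at most $n$ vertices, so the diameter is at least $1/j_{n-1}$.
\item Existence of the path is by contradiction. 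Otherwise the path component of $p^1$ would be enclosed by points of strictly larger inradius. Two such points $q^1,q^2$ with $p^1\in\conv_{\B,p^i}(\{q^1,q^2\})$ then contradict \Cref{cor:bdarea}.
\item By continuity of $D$, every value in $[D(\tilde S),1/j_{n-1})$ is attained by a simplex with $R=1$ and inradius at most $r(\tilde S)$.
\end{enumerate}

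This gives i) by taking $D=D(K)$; for $D\ge 1/j_{n-1}$ one has $r_D=0$ and a subdimensional set works. It gives ii) by applying the same lemma to the simplex for $D_1$ to reach $D_2$. Note that no monotonicity of $r$ along a prescribed motion of the vertex is used. Quasiconcavity only guarantees that some path keeping $r\le r(\tilde S)$ exists, which is exactly what replaces the estimate you were hoping for.
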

In the planar setting, we additionally show in \Cref{thm:isiplanar} that every left-most point is in fact already the image of an isosceles triangle, explaining the naming $ISI$.

The paper is organized as follows. In Section 2 we collect technical and preliminary results. In Sections 3 and 4, we consider the $LDSI$ and $ISI$ in general, and prove \Cref{thm:rbound} and \Cref{thm:left-mostpoint-ndim}. We conclude the paper by proving \Cref{thm:c3diagram}. 
\section{Preliminaries}

Let $\set{u^1,\dots,u^n}$ denote the 
canonical basis 
of $\R^n$.
For any $X\subset \R^n$, let $\lin(X)$, $\aff(X)$, and $\conv(X)$ be the \cemph{red}{linear}, \cemph{red}{affine}, and \cemph{red}{convex hull} of $X$, respectively. The convex hull of two points $x$ and $y$ is called a \cemph{red}{segment} and is abbreviated by $[x,y]$. The \cemph{red}{boundary} of $X$ is denoted by $\bd(X)$, the \cemph{red}{interior} by $\inte(X)$, the \cemph{red}{closure} by $\cl(X)$, the \cemph{red}{relative boundary} by $\relbd(X)$, and the \cemph{red}{relative interior} by $\relint(X)$.  
For any $X,Y\subset\R^n$ and $\rho\in\R$, let $X+Y:=\{x+y: x\in X, y\in Y\}$ be the \cemph{red}{Minkowski sum} of $X$ and $Y$ and $\rho X:=\{\rho x: x\in X\}$ be the \cemph{red}{$\rho$-dilatation} of $X$. We abbreviate $x+Y:=\set{x}+Y$ and $-X:=(-1)X$.
If $X=-X$, then $X$ is called \cemph{red}{0-symmetric}. 

Let $K \in \CK^n$.
A point $p$ in the boundary of $K$ is \cemph{red}{extreme} if $p\notin \conv(K \setminus \set{p})$. The set of extreme points is denoted by $\ext(K)$.
The \cemph{red}{support function} of $K \in \CK^n$ is defined as $h_K(\cdot): \R^n \to \R $, $h_K(a):=\max_{x\in K}a^{\top}s$ and the  
\cemph{red}{outer normal cone} of $K$ in $v \in \bd(K)$ is denoted by $N_K(v):=\set{a\in\R^n:a^{\top}v = h_K(a) 
}$. 
The \cemph{red}{polar} of $K$ is 
the set $K\pol:=\set{a\in\R^n: h_K(a) \leq1 
}$.

The \cemph{red}{circumradius} of $K \in \CK^n$ is defined as
\[
  R(K):=\inf_{c\in\R^n}\sup_{x\in K} \norm{x-c}=\inf\set{\rho:~K\subset c+\rho\B,~c\in\R^n}
\]
and the \cemph{red}{inradius} as
\[
   r(K):=\sup\set{\rho:~c+\rho\B\subset K,~c\in\R^n}. 
\]
 
The \cemph{red}{diameter} is the length of the maximal distance of two points in $K$:
\begin{equation*}
    D(K):=\max_{x,y\in K}\norm{x-y}=2 \max_{x,y\in K} R([x,y]). 
\end{equation*}
For $s\in\R^n\setminus\set{0}$, the \cemph{red}{$s$-breadth} $b_{s}$ is defined as
                \begin{equation*}
                    b_{s}(K):=  \frac{h_K(s)+h_{K}(-s)}{h_{\B}(s)}.
                \end{equation*}

The following proposition collects well-known properties of the diameter \cite{GritzmannKlee}, including the fact that the diameter can also be described as the maximal breadth of a convex body.
\begin{proposition}
\label{prop:diamproperties}
Let $K\in\CK^n$.
\begin{equation*}
    D(K)=\max_{s\in\R^n\setminus \set{0}} b_{s}(K)=\max_{s\in\ext(\B\pol)} b_{s}(K)=D\left(\frac{K-K}{2}\right)=2R\left(\frac{K-K}{2}\right).
\end{equation*}
\end{proposition}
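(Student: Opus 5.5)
We have to prove the chain
\[
D(K)=\max_{s\neq 0} b_s(K)=\max_{s\in\ext(\B\pol)} b_s(K)=D\Big(\tfrac{K-K}{2}\Big)=2R\Big(\tfrac{K-K}{2}\Big).
\]
We treat the equalities one at a time, working with support functions and polarity throughout.

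\textbf{Step 1: diameter as maximal breadth.} We use the duality $\norm{z}=h_{\B\pol}(z)=\max_{a\in\B\pol}a^\top z$. Since $\B\pol$ is a convex body containing $0$ in its interior, this maximum is attained at an extreme point of $\B\pol$. For fixed $x,y\in K$ and $s\neq 0$ we have
\[
\frac{s^\top(x-y)}{h_\B(s)}\le \frac{h_K(s)+h_K(-s)}{h_\B(s)}=b_s(K).
\]
Taking the maximum over $s$ with $h_\B(s)=1$ gives $\norm{x-y}\le\max_s b_s(K)$, hence $D(K)\le\max_s b_s(K)$.

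Conversely, fix $s$. Choose $x\in K$ attaining $h_K(s)$ and $y\in K$ attaining $h_K(-s)$. Then
\[
b_s(K)=\frac{s^\top(x-y)}{h_\B(s)}\le \norm{x-y}\le D(K).
\]
The middle inequality holds because $s/h_\B(s)\in\B\pol$. This proves the first equality.

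\textbf{Step 2: restriction to extreme points.} The quantity $b_s$ is positively homogeneous of degree $0$ in $s$, so we may normalize $h_\B(s)=1$, i.e.\ $s\in\bd(\B\pol)$. Then $b_s(K)=h_{K-K}(s)$. This is a convex function of $s$, so its maximum over the compact convex set $\B\pol$ is attained at an extreme point. Such a point lies on $\bd(\B\pol)$, where the normalization matches. Hence the maximum over $\ext(\B\pol)$ equals the maximum over all $s$.

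\textbf{Step 3: the difference body.} Apply Step 1 to $\frac{K-K}{2}$. Its support function is $\tfrac12(h_K(s)+h_K(-s))$, which is even. Therefore
\[
b_s\Big(\tfrac{K-K}{2}\Big)=b_s(K),
\]
and so $D\big(\frac{K-K}{2}\big)=D(K)$.

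\textbf{Step 4: symmetric bodies.} It remains to show $D(C)=2R(C)$ for $C=\frac{K-K}{2}$, which is $0$-symmetric. Since $C\subset \rho\B$ with $\rho=\max_{x\in C}\norm{x}$, we get $R(C)\le\rho$. Take $x\in C$ with $\norm{x}=\rho$; then $-x\in C$ as well, so $D(C)\ge\norm{2x}=2\rho\ge 2R(C)$. On the other hand, the general bound $D\le 2R$ follows from the triangle inequality: if $C\subset c+R\B$, any two points of $C$ are at distance at most $2R$. Together these give $D(C)=2R(C)$.

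The last expression in the statement, $2\max_{x,y}R([x,y])$, is handled the same way. A segment is symmetric about its midpoint, so $R([x,y])=\norm{x-y}/2$.

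The only subtle point is Step 2, namely the convexity and normalization argument that lets us restrict to extreme points of $\B\pol$. Everything else is routine.
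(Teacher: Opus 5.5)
The paper gives no proof of this proposition. It records these facts as well known and cites Gritzmann and Klee, so there is no argument of the paper's to compare against. Your derivation is a correct, self-contained proof.

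\begin{itemize}
\item Step~1 correctly uses $\norm{z}=h_{\B\pol}(z)$, which is valid since $0\in\inte(\B)$, and the two-sided estimate.
\item Step~2 correctly combines three facts: the $0$-homogeneity of $b_s$, the convexity of $s\mapsto h_{K-K}(s)$ on the compact convex set $\B\pol$, and $\ext(\B\pol)\subset\bd(\B\pol)=\{s: h_\B(s)=1\}$.
\item Step~3 is just the evenness of $h_{(K-K)/2}$.
\item Step~4 is the standard argument for $0$-symmetric bodies.
\end{itemize}

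Two small remarks. First, the triangle inequality in Step~4 uses $\norm{c-y}=\norm{y-c}$, i.e.\ the $0$-symmetry of $\B$. This is fine because it is part of the standing Minkowski-space assumption, but it is worth saying explicitly. Second, Step~2 can be shortened. Since $\norm{z}=\max_{a\in\ext(\B\pol)}a^\top z$, you get directly $D(K)=\max_{a\in\ext(\B\pol)}h_{K-K}(a)=\max_{a\in\ext(\B\pol)}b_a(K)$, using $h_\B(a)=1$ on $\ext(\B\pol)$.

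The identity $D(K)=2\max_{x,y\in K}R([x,y])$, which you treat at the end, belongs to the paper's definition of $D$ rather than to the proposition. Your argument for it, via Step~4 applied to a segment, is nonetheless correct.
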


The three functionals $r,D,R$ are all monotonically increasing with respect to set inclusion and homogeneous of degree $1$. 
Moreover, they fulfill a certain affine invariance, namely: let $A:\R^n\to\R^n$ be a non-singular affine transformation and $L_A$ be the linear transformation corresponding to $A$. Then, denoting with $R',r',D'$ the functionals with respect to the Minkowski space with  unit ball $L_A(\B)$, we have
\begin{equation*}
R'(A(K))=R(K),\quad r'(A(K))=r(K),\quad \text{ and }\quad D'(A(K))=D(K).
\end{equation*}

We use the following notation for \cemph{red}{hyperplanes}: for $a\in\R^n\setminus\set{0}$ and $\beta\in \R$, we define $H^{ =}_{(a,\beta)}:=\set{x\in\R^n:a^{\top}x = \beta}$. (Closed) \cemph{red}{half-spaces} are denoted analogously using "$\leq$" and "$\geq$". 

For $K,C\in \CK^n$ we say that $K$ is \cemph{red}{optimally contained} in $C$, and denote it by $K\optc C$, if $K\subset C$ and there exist no $c\in\R^n$ and $\rho\in(0,1)$ such that $K\subset c+\rho C$. The following proposition characterizes optimal containment  \cite[Theorem 2.3]{NoDimIndep}.
    \begin{proposition}
    \label{prop:opt}
    Let $K,C\in \CK^n$. Then $K\optc C$ if and only if
    \begin{itemize}
        \item[i)] $K\subset C$ and
        \item[ii)] for some $k\in\{2,\hdots,dim(C)+1\}$, there exist $p^1,\hdots,p^k\in \bd(K)\cap \bd(C)$ and half-spaces $H^{\leq}_{(a^{i},(a^i)^{\top}p^i)}$ supporting $C$ at $p^{i}$ with $a^1,\hdots,a^k\in \ext(C^{\circ})\setminus\{0\}$, affinely independent, such that $0\in \relint(\conv(\{a^1,\hdots,a^k\}))$.
    \end{itemize}
    \end{proposition}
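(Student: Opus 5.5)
The plan is to reduce everything to support-function inequalities, after normalizing so that $0\in\inte(C)$. Optimal containment is invariant under translating $K$ and $C$ together. If $C$ is lower dimensional we work inside $\aff(C)$ and take polars there, which explains why $k\le\dim(C)+1$. With this normalization, $C=\{x: a^\top x\le 1 \text{ for all } a\in C\pol\}$, and every nonzero extreme point $a$ of $C\pol$ lies in $\bd(C\pol)$, so $h_C(a)=1$.

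\emph{Sufficiency.} Assume i) and ii) hold but $K\subset c+\rho C$ for some $c$ and $\rho\in(0,1)$. Since $p^i\in K$ and $a^i$ is an outer normal of $C$ at $p^i$, we get
\[
1=(a^i)^\top p^i\le h_{c+\rho C}(a^i)=(a^i)^\top c+\rho .
\]
Hence $(1-\rho)\le (a^i)^\top c$ for every $i$. Now write $0=\sum_i\lambda_i a^i$ with all $\lambda_i>0$ and $\sum_i\lambda_i=1$, which is possible because $0\in\relint\conv\{a^1,\dots,a^k\}$. Summing the inequalities with these weights gives $0<1-\rho\le 0$, a contradiction.

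\emph{Necessity.} Let $A:=\{a\in C\pol: a^\top p=1 \text{ for some } p\in K\}$ be the set of touching normals. $A$ is closed, because $K$ and $C\pol$ are compact, and $0\notin A$. The first claim is that $0\in\conv(A)$. Suppose not. Then there is a $u$ with $a^\top u<0$ for all $a\in A$, and by compactness $a^\top u\le -\delta$ on $A$ for some $\delta>0$. Consider the neighbourhood $U$ of $A$ in $C\pol$ on which $a^\top u<-\delta/2$. On the compact set $C\pol\setminus U$ we have $\max_{p\in K}a^\top p\le 1-\eta$ for some $\eta>0$. Choosing $\varepsilon>0$ small, we obtain $\max_{p\in K+\varepsilon u}a^\top p<1$ uniformly for all $a\in C\pol$. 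Therefore $K+\varepsilon u\subset\inte(C)$, so $K+\varepsilon u\subset\rho C$ for some $\rho<1$, since $0\in\inte(C)$. This contradicts $K\optc C$.

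Next, $A$ is a union of faces $F_p=\{a\in C\pol: a^\top p=1\}$ of $C\pol$. Each $a\in A$ is therefore a convex combination of extreme points of the same face, and these lie in $\ext(C\pol)\setminus\{0\}$ and touch the same $p$. Consequently $0\in\conv\bigl(A\cap\ext(C\pol)\bigr)$. Carathéodory's theorem then yields affinely independent $a^1,\dots,a^k$ with $0$ in the relative interior of their convex hull and $k\le\dim(C)+1$. Finally $k\ge 2$ holds since each $a^i\ne 0$, and the corresponding touching points $p^i\in\bd(K)\cap\bd(C)$ complete ii).

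The main obstacle is the uniform-slack step in the necessity part. It requires the compactness and neighbourhood argument to produce a single $\varepsilon$ that works for all normals simultaneously, and in the lower-dimensional case it requires carrying the argument out within $\aff(C)$.
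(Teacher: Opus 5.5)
Your proof is correct, but it is not the paper's route, because the paper gives no argument of its own. It quotes Theorem 2.3 of \cite{NoDimIndep}, where the condition reads $0\in\conv(\{a^1,\hdots,a^k\})$. It then only remarks that $0\in\relbd(\conv(\{a^1,\hdots,a^k\}))$ would place $0$ in the convex hull of a proper subset, which justifies the $\relint$ version.

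You prove the statement from scratch. Sufficiency follows by averaging the inequalities $(a^i)^{\top}c\ge 1-\rho$ with the positive weights of $0=\sum_i\lambda_i a^i$. Necessity uses a separation-and-compactness perturbation to show $0\in\conv(A)$, followed by two reductions: replacing $A$ by $A\cap\ext(C^{\circ})$ through the exposed faces $F_p$ of $C^{\circ}$, and passing to a minimal Carath\'eodory subset to get affine independence and $0\in\relint$.

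The citation buys brevity. Your route buys a self-contained justification of exactly the two refinements (extreme normals and relative interior) that the paper inherits or settles in one line. It also makes explicit the tacit normalization $0\in\inte(C)$ (respectively $0\in\relint(C)$, with polars taken in $\lin(C)$, for lower-dimensional $C$). Without it, ii) can fail for trivially optimal pairs: for $K=C=[0,1]^2$ we get $\ext(C^{\circ})=\{u^1,u^2\}$, whose convex hull misses $0$.

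One detail you should add. Since $C^{\circ}$ depends on the origin, translation invariance of $\optc$ alone does not show that ii) survives your normalization. It does survive: for $t\in\inte(C)$, the map $a\mapsto a/(1-t^{\top}a)$ is a projective bijection of $C^{\circ}$ onto $(C-t)^{\circ}$. It preserves extreme points and rescales each $a^i$ only by a positive factor, so affine independence and $0\in\relint$ are kept. Alternatively, simply read $0\in\inte(C)$ as part of the hypothesis.
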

To be precise, in \cite{NoDimIndep}, the condition on the outer normals is written as $0\in \conv(\{a^1,\hdots,a^k\})$. However, the case $0\in\relbd(\conv(\{a^1,\hdots,a^k\}))$ would imply that $0$ is already in the convex hull of a proper subset of $\{a^1,\hdots,a^k\}$. It is therefore appropriate to state this here in this form.

In \cite[Lemma 2.3]{ngons} it is shown that $f(\CK^n)$ is star-shaped with respect to $f(\B)=(1,1)$:
\begin{proposition}
\label{prop:starshaped}
Let $K\in \CK^n$ with $K\optc \B$. Then, 
\begin{equation*}
    f((1-\lambda)K+\lambda \B) =(1-\lambda)f(K) +\lambda f(\B),
\end{equation*}
for every $\lambda\in[0,1]$.
\end{proposition}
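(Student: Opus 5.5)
The approach is to compute the three functionals explicitly for the convex combination. Set $K_\lambda:=(1-\lambda)K+\lambda\B$. Since $f(\B)=(1,1)$, it suffices to show
\[
R(K_\lambda)=(1-\lambda)R(K)+\lambda,\qquad r(K_\lambda)=(1-\lambda)r(K)+\lambda,\qquad D(K_\lambda)=(1-\lambda)D(K)+2\lambda .
\]
The optimal containment $K\optc\B$ gives $R(K)=1$: the inclusion $K\subset\B$ gives $R(K)\le1$, and a smaller translate of $\B$ containing $K$ would contradict optimality. Hence $R(K_\lambda)=1$. The claimed identity then reads
\[
f(K_\lambda)=\bigl((1-\lambda)r(K)+\lambda,\ ((1-\lambda)D(K)+2\lambda)/2\bigr),
\]
which is exactly $(1-\lambda)f(K)+\lambda(1,1)$.

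\textbf{Circumradius.} The bound $R(K_\lambda)\le(1-\lambda)+\lambda=1$ is immediate from $K_\lambda\subset\B$. For the reverse bound, apply \Cref{prop:opt} to $K\optc\B$. This yields touching points $p^i\in\bd(K)\cap\bd(\B)$ with outer normals $a^i\in\ext(\B\pol)$ and $0\in\relint\conv\{a^i\}$, where $h_\B(a^i)=(a^i)^\top p^i=1$. The point $(1-\lambda)p^i+\lambda p^i=p^i$ lies in $K_\lambda$, so the same $p^i$ and $a^i$ witness $K_\lambda\optc\B$. Therefore $R(K_\lambda)=1$.

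\textbf{Inradius.} If $c+r(K)\B\subset K$, then $(1-\lambda)c+((1-\lambda)r(K)+\lambda)\B\subset K_\lambda$, which gives the lower bound. The upper bound is the step I expect to be the main obstacle. The plan is to use support functions. Pick an optimal inner ball $c+r\B\subset K$. In the inball situation of $c+r\B$ inside $K$, there are supporting halfspaces of $K$ with normals $a^1,\dots,a^k$ satisfying $0\in\conv\{a^i\}$ and $h_K(a^i)=h_{c+r\B}(a^i)$. This is the polar or inner version of optimal containment, obtained by approximating $K$ by polytopes if needed.

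Now suppose $x+\rho\B\subset K_\lambda$. Then for every $i$,
\[
\rho\,h_\B(a^i)+(a^i)^\top x\le h_{K_\lambda}(a^i)=(1-\lambda)\bigl((a^i)^\top c+r\,h_\B(a^i)\bigr)+\lambda h_\B(a^i).
\]
Taking a convex combination with weights $\mu_i$ satisfying $\sum\mu_ia^i=0$ eliminates both $x$ and $c$. This leaves $(\rho-(1-\lambda)r-\lambda)\sum\mu_ih_\B(a^i)\le0$. Since $\sum\mu_i h_\B(a^i)>0$, we get $\rho\le(1-\lambda)r+\lambda$.

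\textbf{Diameter.} By \Cref{prop:diamproperties} the diameter is the maximal breadth. Breadth is Minkowski additive, and $b_s(\B)=2$ for every $s$, so $b_s(K_\lambda)=(1-\lambda)b_s(K)+2\lambda$. Maximizing over $s$ gives $D(K_\lambda)=(1-\lambda)D(K)+2\lambda$.

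Combining the three identities with $R(K_\lambda)=1$ yields the claim.
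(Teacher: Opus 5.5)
Your proof is correct. The paper does not prove this proposition itself; it imports it from \cite[Lemma 2.3]{ngons}, so there is no in-paper argument to compare with. Your three computations together give a complete, self-contained proof using only tools available here:
\begin{itemize}
\item the touching points and normals from \Cref{prop:opt} survive in $(1-\lambda)K+\lambda\B$, so $R$ stays $1$;
\item breadths are Minkowski additive with $b_s(\B)=2$, so $D$ is affine in $\lambda$;
\item the support-function averaging makes $r$ affine in $\lambda$.
\end{itemize}

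The one step worth tightening is the existence of normals $a^i$ with $h_K(a^i)=h_{c+r(K)\B}(a^i)$ and $0\in\conv\{a^i\}$. No polytope approximation is needed for this, but the two cases $r(K)>0$ and $r(K)=0$ should be handled separately.
\begin{itemize}
\item If $r(K)>0$, then $c+r(K)\B\optc K$, because $c+r(K)\B\subset c'+\rho K$ with $\rho<1$ would place a ball of radius $r(K)/\rho$ inside $K$. So the normals come directly from \Cref{prop:opt} applied to the pair $(c+r(K)\B,K)$. Equality of the support values follows because the touching points lie in $c+r(K)\B\subset K$.
\item If $r(K)=0$, this route fails, since a single point is never optimally contained in a convex body. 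In that case $K$ lies in a hyperplane $\{x: a^\top x=\alpha\}$, and the pair $\pm a$ together with any $c\in K$ does the job.
\end{itemize}

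Finally, your inradius identity $r((1-\lambda)K+\lambda\B)=(1-\lambda)r(K)+\lambda$ holds for every $K\in\CK^n$. The hypothesis $K\optc\B$ is used only for the circumradius part and to get $R(K)=1$.
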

By \Cref{prop:starshaped}, it is sufficient to describe the boundaries of $f(\CK^n)$ entirely in order to obtain the complete diagram.

There is a special case in which the diagram 
collapses into the single segment $[(0,1),(1,1)]$ \cite{ngons}. 

\begin{proposition}
\label{prop:parallelotopes}
$D(K)=2R(K)$ for all $K\in\CK^n$ if and only if $\B$ is a parallelotope. 
\end{proposition}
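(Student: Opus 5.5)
We prove the two implications separately. Throughout we use $D(K)=2R\bigl(\tfrac{K-K}{2}\bigr)$ (\Cref{prop:diamproperties}), so the claim $D(K)=2R(K)$ for all $K$ is equivalent to $R(K)\le R\bigl(\tfrac{K-K}{2}\bigr)$ for all $K$. The inequality $D\le 2R$ always holds, so only this reverse inequality has content.

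\emph{Parallelotope $\Rightarrow$ equality.} By affine invariance we may assume $\B=[-1,1]^n$. Then $\norm{\cdot}=\norm{\cdot}_\infty$, and $\ext(\B\pol)=\{\pm u^i\}$. By \Cref{prop:diamproperties},
\[
D(K)=\max_i b_{u^i}(K)=\max_i \bigl(h_K(u^i)+h_K(-u^i)\bigr).
\]
Let $c$ be the center of the bounding box $\prod_i[-h_K(-u^i),h_K(u^i)]$. Then
\[
K\subset c+\tfrac12\max_i\bigl(h_K(u^i)+h_K(-u^i)\bigr)\B,
\]
so $R(K)\le D(K)/2$, which gives equality.

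\emph{Equality $\Rightarrow$ parallelotope.} This is the main step, and I would argue by contraposition. Assume $\B$ is not a parallelotope; we look for a $K$ with $R(K)>R\bigl(\tfrac{K-K}{2}\bigr)$. Normalize $K$ so that $\tfrac{K-K}{2}\optc\B$. If $D(K)=2R(K)$ holds for all $K$, then every $K$ with $K-K=2\B$ satisfies $R(K)=1$. Let $c$ be an optimal center, so $K\subset c+\B$.

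I would then exploit the structure of $\B$ through \Cref{prop:opt}, applied to $K-c\optc\B$. This gives touching points $p^i\in\bd(K-c)$ with outer normals $a^i\in\ext(\B\pol)$ and $0\in\relint\conv\{a^i\}$. Since $K-K=2\B$, for every $a\in\ext(\B\pol)$ we have $h_K(a)+h_K(-a)=2h_\B(a)$. Combined with $K-c\subset\B$, this forces $h_{K-c}(a)=h_\B(a)$ and $h_{K-c}(-a)=h_\B(-a)$ for all $a\in\ext(\B\pol)$. Hence $K-c$ touches every facet-type supporting hyperplane of $\B$ from both sides.

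The natural candidate for $K$ is a simplex-like or \enquote{half} body whose difference body is $2\B$ but which does not touch $\B$ at antipodal normal pairs. For example, pick a vertex $v$ of $\B\pol$ (equivalently, a facet of $\B$) and take $K=\conv(\ldots)$ built from a triangle-type configuration in a $2$-dimensional section. This uses the known planar fact that in a non-parallelogram norm some triangle $T$ has $D(T)<2R(T)$.

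I expect this construction to be the main obstacle. I would first reduce it to the planar case: show that if $\B$ is not a parallelotope, then some $2$-dimensional section or projection of $\B$ is not a parallelogram (a body all of whose $2$-dimensional projections are parallelograms is a parallelotope). Then I would lift a planar counterexample triangle to $\R^n$ by taking the preimage under the projection, intersected appropriately so that circumradius and diameter are inherited from the plane.

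Alternatively, since this is \cite[Theorem 1.5]{ngons}, it may simply be cited, and only the easy direction above needs to be written out.
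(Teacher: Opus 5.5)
Your proof that a parallelotope gives $D=2R$ is correct: reduce to $\B=[-1,1]^n$ and center at the midpoint of the bounding box. The converse has a genuine gap. The paragraph with $K-K=2\B$ only unpacks the hypothesis $R(K)=1$: $K-c$ touches both supporting hyperplanes with normals $\pm a$ for every $a\in\ext(\B\pol)$. No contradiction comes out of it.

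The planar reduction, which is supposed to do the real work, fails at the lifting step. For a section $\B\cap L$, the diameter of $T\subset L$ is inherited, but the circumcenter in $\R^n$ may leave $L$, so $R$ can drop. For a projection $\B|L$, every lift of $T$ has circumradius at least the planar one, but its diameter is in general strictly larger.

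In fact, the existence of a non-parallelogram $2$-dimensional section or projection tells you nothing. The cube $[-1,1]^3$ meets (and projects onto) the plane $x_1+x_2+x_3=0$ in a regular hexagon. Take the triangle with vertices $(1,-\tfrac12,-\tfrac12)$, $(-\tfrac12,1,-\tfrac12)$, $(-\tfrac12,-\tfrac12,1)$. With respect to that hexagon it has diameter $\tfrac32$ and circumradius $1$. In $\R^3$, however, it lies in $(\tfrac14,\tfrac14,\tfrac14)+\tfrac34[-1,1]^3$, so $D=2R$ there.

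Worse, no triangle-based witness can work in general. The $3$-dimensional cross-polytope is not a parallelotope, yet every triangle satisfies $D=2R$ for it. This follows from \Cref{thm:rbound}, or equivalently from \Cref{lem:32ipsymmetric} with $m=3$, since the octahedron is a Hanner polytope. A witness there must be a tetrahedron, e.g.\ the one spanned by the centers of four pairwise non-opposing facets, which has $D/(2R)=2/3$.

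The paper gives no proof of this statement; it simply cites \cite[Theorem 1.5]{ngons}, which is the fallback you mention at the end. If you take that route, drop the contrapositive sketch rather than presenting it as an argument.

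A self-contained proof with the paper's own tools uses four points instead of three. If $D(K)=2R(K)$ for all $K$, then in particular for all convex hulls of four points. So $\B$ has the $(4.2)$-IP by \Cref{lem:32ipsymmetric}, and \Cref{prop:hanner}~ii) yields that $\B$ is a parallelotope. Alternatively, $D=2R$ for all finite point sets means that pairwise intersecting families of translates of $\B$ always intersect. That is Helly dimension $1$, which characterizes parallelotopes \cite{sznagy1954}.
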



We recall a recently proven property showing quasiconcavity of the inradius of simplices sharing a common facet \cite[Theorem 1.2]{BGR_euclidean}.
\begin{proposition}
\label{prop:quasiconc}
Let $\M^n$ be a Minkowski space, $p^2,\ldots,p^{n+1}\in\R^n$ affinely independent, and $P$ a convex set contained in the open half-space bounded by the affine hull of $p^2,\ldots,p^{n+1}$. Define $S_p$, $p \in P$,
to be the simplex with vertices $\set{p,p^2,\ldots,p^{n+1}}$. Then there exists 
$p^* \in \ext(P)$ such that $S_{p^*}$ has minimal inradius $r(S_{p^*})$ over all simplices $S_{p}, p \in P$.
\end{proposition}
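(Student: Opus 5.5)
The plan is to prove that $p\mapsto r(S_p)$ is \emph{quasiconcave} on the open half-space $H$ bounded by $\aff\{p^2,\dots,p^{n+1}\}$ and containing $P$. Quasiconcave means that every superlevel set $L_\rho:=\{p\in H: r(S_p)\ge\rho\}$ is convex. The conclusion then follows from a standard argument.

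\emph{Reduction.} Suppose $p\in P$ is not extreme. Write $p$ as a convex combination of points of $P$ that are closer to $\ext(P)$, for instance the endpoints of a maximal segment of $P$ through $p$. Quasiconcavity gives $r(S_p)\ge\min\{r(S_q),r(S_{q'})\}$. Iterating this, or applying Krein--Milman (after truncating $P$ to a compact convex set if necessary), yields $\inf_P r(S_p)=\min_{\ext(P)} r(S_p)$. It remains to check that the minimum is attained. Continuity of $p\mapsto r(S_p)$ on $H$ is clear, because $S_p$ depends continuously on $p$ in the Hausdorff metric and $r$ is continuous.

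\emph{Quasiconcavity.} Choose affine coordinates in which $\aff\{p^2,\dots,p^{n+1}\}=\{x_n=0\}$ and $H=\{x_n>0\}$, and let $F=\conv\{p^2,\dots,p^{n+1}\}$. For $x$ with $0\le x_n\le p_n$ we have
\[
x\in S_p \iff x-\tfrac{x_n}{p_n}\,p\in\bigl(1-\tfrac{x_n}{p_n}\bigr)F,
\]
so $S_p$ is a cone over $F$ with apex $p$. Take $q,q'\in L_\rho$ with $c+\rho\B\subset S_q$ and $c'+\rho\B\subset S_{q'}$, and let $p=\lambda q+(1-\lambda)q'$. The naive center $\lambda c+(1-\lambda)c'$ does not work, because points of $S_q$ and $S_{q'}$ are parametrised by apex weights that depend on the height. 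Instead I will use the height-adjusted weights
\[
\mu=\frac{\lambda q_n}{p_n},\qquad 1-\mu=\frac{(1-\lambda)q'_n}{p_n},
\]
and the candidate center $c''=\mu c+(1-\mu)c'$. Take $x=c+\rho b\in S_q$ and $y=c'+\rho b\in S_{q'}$, written as $x=(1-s)f+sq$ and $y=(1-s')f'+s'q'$ with $f,f'\in F$. The goal is to show that
\[
\mu x+(1-\mu)y=\bigl[\mu(1-s)f+(1-\mu)(1-s')f'\bigr]+\bigl[\mu s\,q+(1-\mu)s'\,q'\bigr]
\]
lies in $S_p$. The second bracket must be rewritten as $\sigma p$ plus a remainder in $\aff F$ direction, and the remainder absorbed into the $F$-part. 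The weights $\mu$ are chosen exactly so that heights add correctly, so the problem is to verify the resulting barycentric coefficients are nonnegative. Since $\mu x+(1-\mu)y=c''+\rho b$ for all $b\in\B$, this would give $c''+\rho\B\subset S_p$, that is, $p\in L_\rho$.

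\emph{Main obstacle.} The hard step is this bookkeeping. The graph $\{(x,p):x\in S_p\}$ is not jointly convex, so some perspective-type reparametrisation is essential. If the direct verification fails, I would first try the following. Apply a projective transformation that fixes $\aff F$ pointwise and sends the pencil of apices to a parallel family, which makes the cones $S_p$ translates of one another in a suitable chart. I would then check that the translate-of-the-ball structure survives up to the affine factor $\mu$, which means checking that the perspective map is affine on each horizontal slice. Failing that, I would reduce to the two-dimensional case by restricting to the plane spanned by the segment $[q,q']$ and a normal direction, where the cone condition becomes two linear inequalities in $(x,p)$ that can be checked by hand.
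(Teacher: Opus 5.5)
\textbf{There is a genuine gap.} The paper does not prove this statement; it quotes it from \cite[Theorem 1.2]{BGR_euclidean}. Your proposal therefore has to stand on its own, and its core step, quasiconcavity, is not established.

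\textbf{The inclusion you aim for is false as formulated.} You allow arbitrary admissible centers $c,c'$, and then $c''=\mu c+(1-\mu)c'$ need not even lie in $S_p$. Take the Euclidean plane with $F=[(-1,0),(1,0)]$, $q=(0,H)$, $q'=(0,h)$ with $h\le 1$, $\lambda=\frac12$, and $\rho=r(S_{q'})<\frac12$. For large $H$, a disc of radius $\rho$ fits into $S_q$ with center $c=(0,H-\rho\sqrt{1+H^2})$, touching both long sides. Since $\mu=H/(H+h)\to 1$, the point $c''$ has height $H(1-\rho)+O(1)$. This is far above the apex of $S_p$, which sits at height $(H+h)/2$.

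The cause is visible in your decomposition. It gives the apices total weight $\mu s+(1-\mu)s'=\frac{\lambda x_n+(1-\lambda)y_n}{p_n}$. But $\mu x+(1-\mu)y\in S_p$ forces weight $\frac{\mu x_n+(1-\mu)y_n}{p_n}$ on $p$. For $q_n\neq q'_n$ these agree only if $x_n=y_n$.

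Your fallbacks do not repair this. A projective map does not send homothets of $\B$ to homothets of $\B$. A planar section cannot certify that an $n$-dimensional ball lies in $S_p$.

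\textbf{The missing idea is to equalize the heights of the two balls first.} Note that $S_q=\bigl(q+\operatorname{pos}(F-q)\bigr)\cap\{x_n\ge 0\}$. The cone is invariant under $x\mapsto q+(1+t)(x-q)$ for $t\ge0$, and $\rho\B\subset(1+t)\rho\B$. Hence $c+t(c-q)+\rho\B\subset S_q$ as long as this ball stays in $\{x_n\ge0\}$. Slide it until it rests on $\operatorname{aff}F$, and do the same in $S_{q'}$; then $c_n=c'_n$.

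Now fix $b\in\B$ and write $x=c+\rho b=(1-s)f+sq$ and $y=c'+\rho b=(1-s')f'+s'q'$. These satisfy $x_n=y_n$, so $\mu s=\lambda x_n/p_n$ and $(1-\mu)s'=(1-\lambda)x_n/p_n$. Therefore
\[
\mu x+(1-\mu)y=\frac{x_n}{p_n}\,p+\mu(1-s)f+(1-\mu)(1-s')f',
\]
where the last two weights are nonnegative and sum to $1-x_n/p_n$. Thus $c''+\rho\B\subset S_p$, and your argument closes.

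A center-free alternative also works. For a simplex the inball touches all facets, so $r(S)=n\,\mathrm{vol}(S)/\sum_{G}\mathrm{vol}_{n-1}(G)\,h_\B(u_G)$, summing over facets $G$ with Euclidean unit outer normals $u_G$. For $S_p$ the numerator is affine in $p$. Each vector area $\mathrm{vol}_{n-1}(G)u_G$ is affine in $p$, so the denominator is convex. Hence the superlevel sets $\{\text{numerator}\ge\rho\cdot\text{denominator}\}$ are convex.

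\textbf{Two smaller points in your reduction.} First, the statement needs $P$ compact; in the paper's applications $P$ is a polytope. Truncating a non-compact $P$ creates new extreme points, so that is no remedy. Second, iterating along maximal segments need not terminate. Instead, take a minimizer $p_0\in P$, write it as a finite convex combination of extreme points, and apply quasiconcavity to that combination.
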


We use this property to compare inradii of simplices, whose vertices all belong to the boundary of $\B$. Here, the convex hull $P$ of points on $\bd(\B)$, our choices for the changing vertex, may not belong to $\bd(\B)$ itself. \Cref{cor:bdarea} below shows that we may finally choose the last vertex on a subset of $\bd(\B)$ which belong to the projection of 
$P$ onto $\bd(\B)$ with a center of projection being a vertex of the shared facet $\conv\{p^2,\dots,p^{n+1}\}$ of the simplices.
  
For $\set{q^1,\ldots,q^k}\subset\bd(C)$ and $P:=\set{p^1,\ldots,p^m}\subset\bd(C)$  we say that $q$ belongs to the \cemph{red}{$C,P$-convex hull} of $\set{q^1,\ldots,q^k}$, which we denote by $\conv_{C,P}\left(\set{q^1,\ldots,q^k}\right)$, if 
\begin{equation*}
    q \in \conv \left( \bigcup_{j=1}^{m} \set{ p^j+\sum_{i=1}^{k} \alpha_i(q^i-p^j ):\alpha_i\geq 0, i\in[k], \sum_{i=1}^k \alpha_i\geq 1 }\right) \cap \bd(C). 
\end{equation*}
If $P$ consists of a single point $p$, we abbreviate $\conv_{C,\set{p}} =: \conv_{C,p}$.

Using this notation, the quasiconcavity property can be applied to points on the boundary of a general convex body \cite[Corollary 3.5]{BGR_euclidean}. 

\begin{corollary}  \label{cor:bdarea}
Let $\M^n$ be a Minkowski space and $\set{p^2,\ldots,p^{n+1}}\subset \bd(\B)$ be affinely independent. Furthermore, let $\set{q^1,\ldots,q^k}\subset \bd(\B)$ be contained in the same open half-space bounded by the hyperplane $\aff\left(\set{p^2,\ldots,p^{n+1}}\right)$. Then, for $S:=\conv\left(\set{p^1,p^2,\ldots,p^{n+1}}\right)$ with $p^1 \in \conv_{\B,\set{p^2,\hdots,p^{n+1}}}$
and $S_i:=\conv\left(\set{q^i,p^2,\ldots,p^{n+1}}\right)$
we have 
\begin{equation*}
   r(S)\geq \min_{i\in[k]} r(S_i).
\end{equation*}
\end{corollary}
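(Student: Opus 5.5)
The plan is to reduce everything to two applications of \Cref{prop:quasiconc}, plus monotonicity of the inradius under inclusion. The boundary condition $p^1\in\bd(\B)$ plays no role in the inequality itself; only the cone description in the definition of $\conv_{\B,\set{p^2,\dots,p^{n+1}}}$ matters. Throughout, $H$ denotes the open half-space bounded by $\aff\left(\set{p^2,\ldots,p^{n+1}}\right)$ that contains all $q^i$.

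\emph{Step 1: unpack the definition.} By definition of the $\B,P$-convex hull with $P=\set{p^2,\dots,p^{n+1}}$, I write $p^1=\sum_{j=2}^{n+1}\lambda_j y^j$ with $\lambda_j\ge 0$ and $\sum_j\lambda_j=1$. Each $y^j$ has the form $y^j=p^j+\sum_i\alpha^{(j)}_i(q^i-p^j)$ with $\alpha^{(j)}_i\ge0$ and $t_j:=\sum_i\alpha^{(j)}_i\ge1$. Setting $z^j:=\frac1{t_j}\sum_i\alpha^{(j)}_i q^i\in\conv\set{q^1,\dots,q^k}$, this becomes
\[
y^j=p^j+t_j(z^j-p^j), \qquad t_j\ge1 .
\]
Hence $z^j\in[p^j,y^j]$.

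\emph{Step 2: the points $y^j$ lie in $H$.} Since $H$ is convex, $z^j\in H$. Since $p^j$ lies on the bounding hyperplane and $t_j\ge1$, also $y^j\in H$. In particular $S_{y^j}:=\conv\set{y^j,p^2,\dots,p^{n+1}}$ is a full-dimensional simplex, and so is $S_{z^j}$.

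\emph{Step 3: compare $S_{y^j}$ with $S_{z^j}$.} Both $p^j$ and $y^j$ are vertices of $S_{y^j}$, so $z^j\in[p^j,y^j]\subset S_{y^j}$. Therefore $S_{z^j}\subset S_{y^j}$, and monotonicity of $r$ gives $r(S_{y^j})\ge r(S_{z^j})$.

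\emph{Step 4: bound $r(S_{z^j})$.} I apply \Cref{prop:quasiconc} with $P=\conv\set{q^1,\dots,q^k}$, which is contained in $H$. Its extreme points are among the $q^i$, so
\[
r(S_{z^j})\ge \min_{p\in P} r(S_p)=\min_i r(S_i).
\]

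\emph{Step 5: conclude for $S$.} I apply \Cref{prop:quasiconc} a second time, now with $P'=\conv\set{y^2,\dots,y^{n+1}}\subset H$. Since $p^1\in P'$ and $\ext(P')\subset\set{y^j}$,
\[
r(S)\ge \min_j r(S_{y^j})\ge\min_j r(S_{z^j})\ge\min_i r(S_i).
\]

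The only step needing care is Step 3: the point is that pushing the apex outward along a ray emanating from one of the fixed vertices $p^j$ enlarges the simplex. This explains why the center of projection in the definition must be a vertex of the shared facet. The remaining concern is degenerate bookkeeping, namely that $z^j\neq p^j$, which holds because $z^j\in H$ while $p^j$ is not.
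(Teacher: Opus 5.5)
Your proof is correct: writing each generating point as $y^j=p^j+t_j(z^j-p^j)$ with $z^j\in\conv(\{q^1,\dots,q^k\})$ and $t_j\ge 1$, you get the inclusion $S_{z^j}\subset S_{y^j}$, and two applications of \Cref{prop:quasiconc} (over $\conv(\{q^1,\dots,q^k\})$ and over $\conv(\{y^2,\dots,y^{n+1}\})$) give the required chain of inequalities. The paper does not reprove this statement; it imports it from \cite[Corollary 3.5]{BGR_euclidean} as a consequence of that same quasiconcavity property, and your reduction (monotonicity of $r$ along rays from a vertex of the common facet, then quasiconcavity) is the intended derivation.
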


\section{Well-known parts of the diagram}
\label{sec:knownparts}
We say that one of the five boundaries of the diagram $f(\CK^n)$ (symmetry boundary, completeness boundary, Jung boundary, $LDSI$, and  $ISI$) \cemph{red}{collapses} if 
the system of inequalities is complete even without this (type of) inequality.

The upper bound \eqref{eq:upper} never collapses since segments are mapped to $(0,1)$, the unit ball to $(1,1)$ and the connecting line can be filled by convex combinations of segments and $\B$ by \Cref{prop:starshaped}. 

The right boundary, induced by \eqref{eq:right}, is a linear boundary of $f(\CK^n)$, too. A convex body $K$ is called (diametrically) \cemph{red}{complete} if $D(K\cup \set{p})> D(K)$ for every $p\notin K$. 
A complete set $K^*$ with $K^*\supset K$ and $D(K^*)=D(K)$ is called a \cemph{red}{completion} of $K$. A completion of $K$ with $R(K^*)=R(K)$ is called a \cemph{red}{Scott-completion}. There exists a Scott-completion for every $K\in\CK^n$ independently of the chosen Minkowski space \cite{scott1981sets, vrecica1981note}. In \cite{sallee1986} it is shown that every complete set is mapped to the completeness boundary (which is the reason for our naming), and obviously, for every  $K\in\CK^n$, every one of its Scott-completions is mapped to the rightmost point of the diagram with the same $y$-coordinate as $K$. Using \Cref{prop:parallelotopes}, we easily conclude that this boundary collapses if and only if $\B$ is a parallelotope.

Since the left-hand side of the diagram consists of the parts induced by the $LDSI$ and the $ISI$, for all Minkowski spaces, $f(\CK^n)$ has a shape as shown in \Cref{fig:generalshape}. 

\begin{figure}[ht]
    \centering
      \begin{tikzpicture}[scale=4]

     \draw[->] (-0.1,0) -- (1.1,0) node[right] {$\frac{r}{R}$};
     \draw[->] (0,-0.1) -- (0,1.2) node[above] {$\frac{D}{2R}$}
     ;

     \draw[ domain=0:1, smooth, variable=\x, color=black] plot ({\x}, {1})
     ;
     \draw[ domain=0.5:1, smooth, variable=\x] plot ({\x}, {0.5*\x+0.5})
     ;
     \draw[ domain=0.25:0.5, smooth, variable=\x] plot ({\x}, {0.75})
     ;
     ;
     \draw[name path=F2, domain=0.75:0.9, smooth, variable=\y] plot ({(-100/9) *(\y-0.9)*(\y-0.6)},{\y} ) 
     ;
     ;
      \draw[ domain=0:1, dotted, variable=\y] plot ({1}, {\y})
     ;
      \node[label=below:$1$] (one) at (1,0) {.};
     ;
      \node[label=left:$1$] (delta) at (0,1) {.};

     \node[label=above:$\B$] ($T$) at (1,1) {.};
     \draw[fill=black] (1,1) circle[radius=0.4pt];

     \node[label=above right:$L$] ($L_D$) at (0,1) {.};
     \draw[fill=black] (0,1) circle[radius=0.4pt];


    \node[label=below right:$S_J^{*}$] (Tmax) at (0.5,0.75) {.};
     \draw[fill=black] (0.5,0.75) circle[radius=0.4pt];

     \node[label=below left:$S_J$] (T) at (0.25,0.75) {.};

     \draw[fill=black] (0.5,0.75) circle[radius=0.4pt];
     \draw[fill=black] (0,0.9) circle[radius=0.4pt];
     \node[label=below left:$S_0$] ($T$) at (0,0.9) {.};
     \draw[fill=black] (0.25,0.75) circle[radius=0.4pt];
 \end{tikzpicture}

    \caption{The general shape of $f_{\M^n}(\CK^n)$. The unit ball is mapped to $(1,1)$, segments $L$ to $(0,1)$, and the smallest diameter-circumradius ratio for subdimensional sets is attained by $S_0$. The leftmost point on the Jung-boundary is attained by $S_J$ and its completion, which also lies on the completeness boundary, is denoted by $S_J^{*}$. 
    }
    \label{fig:generalshape}
\end{figure}

The Jung boundary \eqref{eq:generalJung} is also linear. It is named after Jung \cite{Jung1901}, who proved that $j(\E^n)=\sqrt{n/2(n+1)}$. In \cite[Lemma 2.10 \& Theorem 1.1]{AsymmetryJung} it is proven, as a consequence of Proposition \ref{prop:opt}, that there exists an $n$-simplex or a subdimensional set $S_J$ with $R(S_J)/D(S_J)=j(\M^n)$
such that it has minimum inradius amongst all sets attaining equality in \eqref{eq:generalJung}. If we let $S_J^*$ be a Scott-completion of $S_J$, then $S_J^*$ clearly fulfills \eqref{eq:right} and  \eqref{eq:generalJung} with equality and therefore is of maximal inradius among the Jung-extreme sets. The family of sets $(1-t)S_J+t S_J^*$, $t\in[0,1]$ fills the Jung boundary of $f(\CK^n)$.

Bohnenblust \cite{bohnenblust1938convex} showed that in general
\begin{equation} \label{eq:bohnenblust}
    j(\M^n)\leq \frac{n}{n+1} 
\end{equation}
and the 
the Minkowski spaces in which this inequality is attained with equality are exactly the cases in which the Jung boundary collapses. This collapse is characterised by the following equivalent conditions  \cite[Corollary 2.9]{AsymmetryJung}. 
\begin{proposition}
 \label{prop:jungboundcollaps}
 Let $\M^n$ be a Minkowski space. Then, the following are equivalent.
 \begin{enumerate}[i)]
    \item The Jung-boundary collapses.
     \item $j(\M^n)=\frac{n}{n+1}$, \ie$\M^n$ is Bohnenblust-extreme. 
     \item There exists a complete simplex in $\M^n$.
     \item $S-S\subset D(S)\B \subset (n+1)(S\cap (-S))$,
         where $S$ is a non-singular linear transformation of a centered regular 
         $n$-simplex.
 \end{enumerate}
\end{proposition}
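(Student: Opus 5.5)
The plan is to prove the cycle ii)$\Rightarrow$iv)$\Rightarrow$iii)$\Rightarrow$ii), and to tie i) to iii) through the body $S_J$ of \Cref{sec:knownparts}. Throughout I use translation and affine invariance. By these I may assume that every simplex considered has its centroid at the origin. Whenever it helps, I also apply a linear map sending it to a centered regular simplex, replacing $\B$ by its image. The basic tool is Bohnenblust's argument behind \eqref{eq:bohnenblust}. For an $n$-simplex $S$ with centroid $0$ one has
\begin{equation*}
-S\subset nS \quad\text{and}\quad S\subset \tfrac{n}{n+1}(S-S)\subset \tfrac{n}{n+1}D(S)\B.
\end{equation*}
The second chain gives $R(S)\le \frac{n}{n+1}D(S)$.

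For ii)$\Rightarrow$iv), take $S_J$ with $R(S_J)=\frac{n}{n+1}D(S_J)$. By \cite{AsymmetryJung} it is a simplex or subdimensional. The subdimensional case is excluded: applying Bohnenblust in $\aff(S_J)$ gives a ratio at most $\frac{n-1}{n}<\frac{n}{n+1}$, and the ambient circumradius is no larger than the relative one. So $S:=S_J$ is a simplex, and equality in the Bohnenblust chain says that $S$ is optimally contained in $\frac{n}{n+1}D\B$, where $D:=D(S)$. I would apply \Cref{prop:opt} to this containment. It yields touching points of $S$ with outer normals $a^i\in\ext(\B^\circ)$ having $0$ in the relative interior of their convex hull. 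Comparing with $S\subset\frac{n}{n+1}(S-S)$, the touching points must be vertices, and the normals must be (up to scaling) the facet normals of $S$. This should force $\frac{n}{n+1}D\B$ to be supported by the hyperplanes through the vertices of $S$ parallel to the opposite facets. These hyperplanes bound exactly $-nS=n(n+1)\cdot\frac{-S}{n+1}$, which gives $D\B\subset (n+1)(S\cap(-S))$. Together with the trivial inclusion $S-S\subset D\B$ this is iv). I expect this step to be the main obstacle: showing that the optimal containment forces all $n+1$ facet directions to be used, and not only a subset of normals spanning $0$, needs a careful use of the equality case in $S\subset\frac{n}{n+1}(S-S)$.

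For iv)$\Rightarrow$iii), I show $S$ is complete using $D(S)=D$ and the inclusion $D\B\subset(n+1)(S\cap(-S))$. Let $p\notin S$. Then $p$ violates some facet inequality of $S$, say the one opposite the vertex $v$. The vertex $v$ and the facet hyperplane are at the extreme positions in that facet direction. Scaled by $n+1$, this direction is exactly where $D\B$ is touched, since $(n+1)(S\cap -S)$ is cut out by $\pm$ facet inequalities. Hence $\norm{p-v}>D$, so $D(S\cup\{p\})>D(S)$.

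For iii)$\Rightarrow$ii), let $S$ be a complete simplex. Completeness gives $S=\bigcap_{x\in S}(x+D\B)$, and it suffices to intersect over the vertices. Each facet must be cut out by a ball centred at the opposite vertex, so $D\B$ supports the facet direction at distance equal to the vertex–facet height. This yields $D\B\subset(n+1)(S\cap -S)$, hence the circumball of $S$ has radius $\frac{n}{n+1}D$ and $j(\M^n)=\frac{n}{n+1}$. Finally, for i)$\Leftrightarrow$iii): the Jung boundary is the segment $f\bigl((1-t)S_J+tS_J^*\bigr)$, $t\in[0,1]$, lying on the line $D/(2R)=1/(2j)$. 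It collapses exactly when this segment is the single point where the Jung line meets the completeness line $r+R=D$, i.e.\ when $S_J$ is itself complete. Since $S_J$ is a simplex or subdimensional, and a subdimensional set is never complete, this is iii). For the converse direction, a complete simplex is Jung-extreme by iii)$\Rightarrow$ii), so it can be taken as $S_J$.
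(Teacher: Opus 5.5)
The paper does not prove this proposition; it quotes it from \cite[Corollary 2.9]{AsymmetryJung}, so your self-contained argument has to stand on its own. The cycle ii)$\Rightarrow$iv)$\Rightarrow$iii)$\Rightarrow$ii) is sound in outline, but the link to i) has a genuine gap. You assert that the Jung boundary collapses exactly when $f(S_J)$ lies on the line $r+R=D$, ``i.e.'' when $S_J$ is complete. Equality in \eqref{eq:right} does not imply completeness. Already in the Euclidean plane, take the convex hull of the three vertices of a Reuleaux triangle together with its inball. This set has the same inradius, circumradius and diameter as the Reuleaux triangle, so $r+R=D$. Yet it is a proper subset of a set of equal diameter, hence not complete. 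So i)$\Rightarrow$iii) is not established as written.

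What is missing is an argument specific to simplices. If $c+r\B\subset S\subset c'+R\B$, then $\B=-\B$ gives $S\subset c'+\frac{R}{r}(c-S)$. A simplex lies in no translate of $-\lambda S$ with $\lambda<n$, so every simplex satisfies $R(S)\ge n\,r(S)$ in every Minkowski space. Now suppose $f(S_J)$ lies on the completeness line. A subdimensional $S_J$ is excluded, since $r=0$ would force $R=D$, contradicting Bohnenblust. Then $D(S_J)=r+R\le\frac{n+1}{n}R(S_J)$, which together with Bohnenblust gives ii). Your chain ii)$\Rightarrow$iv)$\Rightarrow$iii), applied to $S_J$ itself, then shows $S_J$ is complete.

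The converse needs the same repair. $S_J$ is by definition a Jung-extreme set of \emph{minimal} inradius, so you cannot simply declare a given complete simplex to be $S_J$. Instead, once ii) holds, your ii)$\Rightarrow$iv) argument applies to every Jung-extreme simplex, in particular to $S_J$, which is therefore complete and collapses the segment.

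Two smaller points. First, the step you flag as the main obstacle in ii)$\Rightarrow$iv) is immediate. You correctly get that touching points are vertices $v_i$ with normals in the direction $-a_i$. Any $n$ of the $n+1$ facet normals of an $n$-simplex are linearly independent, so $0$ can lie in the relative interior of the convex hull of a subset of them only if all $n+1$ are used. Second, in iii)$\Rightarrow$ii) you should justify that the facet $F_i$ is cut out by $v_i+D\B$. The centroid $y$ of $F_i$ satisfies $\norm{y-v_j}\le\frac{n-1}{n}D<D$ for $j\neq i$, so $S$ coincides with $v_i+D\B$ near $y$; normalizing $S=\{x: a_i^{\top}x\le 1\}$ with centroid $0$, this gives $D\,h_{\B}(a_i)=n+1$. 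You should also justify that the circumradius is then $\frac{n}{n+1}D$. All vertices touch $\frac{n}{n+1}D\B$ with normals $-a_i$, whose convex hull contains $0$ in its interior, so \Cref{prop:opt} applies.
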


Examples of unit balls where the Jung boundary collapses are 
regular hexagons in the planar case and cross-polytopes or cube-octahedrons in 3-space. 

In the following, we study the types of triangles mapped to the Jung boundary in the planar setting. We call those triangles \cemph{red}{Jung-triangles}. In $\E^n$, the regular $n$-simplex is always mapped to the Jung boundary. Let us mention that it is not known if diametric simplices always exist in all Minkowski spaces for $n\geq 5$. Existence is shown in Minkowski spaces with a unit ball sufficiently close to the Euclidean ball \cite{dekster2000} or the cube 
 \cite{swanepoelvilla2008}.  
Diametric triangles always have a unique completion \cite{brru2023}. Again, this is different in higher dimensions. In $\E^3$, \eg, the regular tetrahedron is obviously diametric, but has (not only) the two famous Meissner bodies \cite{meissner1912} as different completions. 

\begin{theorem}\label{thm:SmoothStrictlydiametric}
 Let $\M^2$ be a Minkowski plane. If the unit ball $\B$ is strictly convex and smooth, every Jung-triangle is diametric.
\end{theorem}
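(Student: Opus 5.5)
A Jung-triangle $T$ is a triangle with $R(T)=j(\M^2)D(T)$. The plan is to argue by contradiction via a perturbation. Suppose $T=\conv\{p^1,p^2,p^3\}$ satisfies $R(T)=j(\M^2)D(T)$ but is not diametric. After rescaling and translating we may assume $D(T)=1$ and $T\optc R(T)\B$, with circumcenter $0$. We then construct a triangle $T'$ with $D(T')<D(T)$ and $R(T')=R(T)$. This gives $R(T')/D(T')>j(\M^2)$, contradicting the definition of the Jung constant.

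\textbf{Step 1: the circumball is touched at all three vertices.} Apply \Cref{prop:opt} to $T\optc R(T)\B$. It gives touching points $q^1,\dots,q^k\in\bd(T)\cap\bd(R(T)\B)$, $k\in\{2,3\}$, with outer normals $a^i\in\ext(\B\pol)$ satisfying $0\in\relint\conv\{a^i\}$.

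First we exclude $k=2$. In that case $a^2=-\lambda a^1$, and strict convexity forces $q^2=-q^1$ (the normals at antipodal points of a symmetric strictly convex body determine unique points). Then $\norm{q^1-q^2}=2R(T)$, so $D(T)\ge 2R(T)$, i.e.\ $j(\M^2)\ge 1/2$. But by \eqref{eq:bohnenblust} we have $j(\M^2)\le 2/3$, and for a strictly convex norm I would argue $j>1/2$ (a parallelogram is not strictly convex, so \Cref{prop:parallelotopes} yields a body with $D<2R$, which gives $j>1/2$). This contradicts $D(T)=2R(T)$, which would mean $R/D=1/2<j$.

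Hence $k=3$. By strict convexity the touching points lie in $\ext(R(T)\B)$, and they must be vertices of $T$, since a segment of $\bd T$ inside $\bd(R(T)\B)$ is impossible. So all $p^i\in\bd(R(T)\B)$. Smoothness gives a unique normal $a^i$ at each $p^i$, and $0\in\inte\conv\{a^1,a^2,a^3\}$.

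\textbf{Step 2: perturb a vertex.} Since $T$ is not diametric, some edge is short. Assume the Jung extremality is strict, so at most two edges have length $1$. Say $\norm{p^2-p^3}<1$ and, if needed, $\norm{p^1-p^2}=\norm{p^1-p^3}=1$ (or fewer long edges). Move $p^1$ along $\bd(R(T)\B)$ to a nearby point $p^1(t)$ chosen to decrease both $\norm{p^1-p^2}$ and $\norm{p^1-p^3}$ to first order. Smoothness of $\B$ makes $\norm{\cdot}$ differentiable away from $0$, so these directional derivatives are given by the unique dual normals $b^{12},b^{13}\in\bd\B\pol$ of the edge directions. A direction $v$ tangent to $\bd\B$ at $p^1$, i.e.\ with $(a^1)^\top v=0$, satisfies $(b^{12})^\top v<0$ and $(b^{13})^\top v<0$ unless the tangent line separates them badly. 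If no such $v$ exists, instead move $p^2$ or $p^3$ (the endpoints of the short edge), which only have to shorten one long edge; strict convexity ensures the tangent direction is not parallel to that edge, so a sign choice works. If all three vertices stay on $\bd(R(T)\B)$ and their normals still contain $0$ in the interior of their convex hull for small $t$ (an open condition), then $T(t)\optc R(T)\B$ by \Cref{prop:opt}. Hence $R(T(t))=R(T)$ while $D(T(t))<1$, which is the contradiction.

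\textbf{Main obstacle.} The hard step is Step 2's case analysis: guaranteeing a tangential move that strictly decreases all edges of length $1$ simultaneously. The delicate configuration has two long edges meeting at $p^1$. There I would first try moving $p^1$ along the tangent. If the two dual directions $b^{12},b^{13}$ lie on opposite sides of $a^1$'s orthogonal direction, I would instead move $p^2$ toward $p^3$ along the boundary, which shortens only $\norm{p^1-p^2}$ and keeps $\norm{p^2-p^3}<1$, and then reduce to the case of one long edge. That case is always handled by a single tangential move, because strict convexity prevents the edge direction from being tangent to $\bd\B$ at its endpoint.
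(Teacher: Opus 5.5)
Your overall plan is the paper's. All three vertices lie on the circumsphere. The condition $0\in\inte(\conv\{a^1,a^2,a^3\})$ is open, so small moves along $\bd(R(T)\B)$ preserve optimal containment. You then shorten every edge of length $D(T)$ by moving the endpoints of a short edge, contradicting Jung-extremality.

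The gap is in the one step that carries the content: why moving, say, $p^2$ along $\bd(R(T)\B)$ decreases $\norm{p^2-p^1}$ to first order. You attribute this to strict convexity preventing the tangent at $p^2$ from being parallel to the edge $p^2-p^1$. That is true, since a chord of a strictly convex body is never tangent, but it is the wrong condition. The derivative is $g^{\top}v$, where $v$ spans the tangent line of $\bd(R(T)\B)$ at $p^2$ and $g$ is the outer normal of $\B$ at $(p^2-p^1)/\norm{p^2-p^1}$. It vanishes exactly when the supporting line of $R(T)\B$ at $p^2$ also supports $p^1+D(T)\B$ at $p^2$. Strict convexity and smoothness alone do not exclude this. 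In $\E^2$ with $p^1=-p^2$ on the circle, the tangent at $p^2$ is perpendicular to the edge, yet $\frac{d}{dt}\norm{p^2(t)-p^1}=0$ at $t=0$ when $p^2$ moves along the circle. Consistently with this, your Step 2 never uses $D(T)<2R(T)$, which is precisely what is needed. The same wrong criterion reappears in your ``main obstacle'' paragraph.

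The missing argument is the core of the paper's proof. Normalize $R(T)=1$ and write $D=D(T)$. Suppose the supporting line at $p^2$ with normal $a^2$ also supported $p^1+D\B$ at $p^2$. Then $g$ and $a^2$ are parallel. Since $\B$ is strictly convex and $0$-symmetric, the only boundary points of $\B$ with outer normal $\pm a^2$ are $\pm p^2$, so $(p^2-p^1)/D=\pm p^2$, i.e.\ $p^1=(1\mp D)p^2$. Taking norms gives $D=2$ or $D=0$, which contradicts $0<D<2$ from your Step 1. Only with this transversality does your ``sign choice'' give a strict first-order decrease. That covers both the one-long-edge case and each endpoint of the short edge in the two-long-edge case. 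Your vague attempt to move the common vertex $p^1$ can then be dropped.
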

\begin{proof}
Let $S:=\conv(\set{p^1,p^2,p^3})\optc \B$ be a Jung-triangle. Since $\B$ is not a parallelogram we have $D:=D(S) < 2$ and, by \Cref{prop:opt}, there exist outer normals $a^i$ supporting $S$ at $p^i$, $i=1,2,3$,  with $0\in\inte (\conv(\set{a^1,a^2,a^3}))$.  
Hence, moving one vertex slightly does not change the optimal containment. 

The diameter of a triangle is attained by one of its edges. Assume that $D=D([p^1,p^2])>D([p^2,p^3])$. Now, consider $p^1+D\B$. Assume the boundary of $p^1+D\B$ does not cross the boundary of $\B$ at $p^2$. Then, the hyperplane with outer normal $a_2$ also supports $p^1+D\B$ at $p^2$. But since $\B$ is strictly convex and smooth, it needs to support $p^1+D\B$ at $p^1+Dp^2$, which implies $p^1=(1-D)p^2$ and therefore $D=2$, a contradiction.  
Thus, the boundary of $p^1+D\B$ crosses 
    the boundary of $\B$ at $p^2$ and we can move $p^2$ slightly such that $p^2\in \inte (p^1+D\B)$ but still $D>D([p^2,p^3])$. Then the diameter of $[p^1,p^2]$ becomes smaller.  
    If we still have $D=D([p^1,p^3])$, we can apply the same idea and move $p^3$ slightly as well and decrease the diameter. Unless $S$ was diametric, we can decrease the diameter while maintaining optimal containment, which contradicts $S$ being a Jung-triangle. 
\end{proof}

\begin{corollary}
   In 
   every Minkowski plane 
   there exists a Jung-extreme diametric triangle.
\end{corollary}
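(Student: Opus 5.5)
The plan is to derive the corollary from \Cref{thm:SmoothStrictlydiametric} by approximating the unit ball with smooth strictly convex norms and passing to the limit, using compactness. If $\B$ is a parallelogram, then every triangle satisfies $D=2R$, so $j(\M^2)=1/2$ and every triangle is Jung-extreme. In this case it suffices to exhibit one diametric triangle. Up to a linear map we may take $\B=[-1,1]^2$. With the maximum norm, the triangle with vertices $(0,0),(1,0),(0,1)$ is diametric, since all three edges have length $1$. So we may assume from now on that $\B$ is not a parallelogram.

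\textbf{Approximation step.} I will choose a sequence of smooth, strictly convex, $0$-symmetric bodies $\B_k\to\B$ in the Hausdorff metric. For example, take $\B_k$ to be the smooth strictly convex symmetric body obtained by adding $\frac1k$ times the Euclidean disc to $\B$ and then intersecting suitably, or apply any standard regularization. Next I will use the continuity of the Jung constant: $j(\M^2_k)\to j(\M^2)$. This holds because $R$ and $D$ depend continuously on the norm, uniformly on bounded sets of triangles after normalization. By \cite[Lemma 2.10 \& Theorem 1.1]{AsymmetryJung} there is a Jung-extreme triangle (or segment) $S_k$ in $\M^2_k$, normalized so that $S_k\optc\B_k$. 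A segment cannot be Jung-extreme here, because a segment has ratio $R/D=1/2$, while $j(\M^2_k)>1/2$ once $\B_k$ is close to a non-parallelogram. This last point uses \Cref{prop:parallelotopes} together with continuity. So $S_k$ is a triangle, and by \Cref{thm:SmoothStrictlydiametric} it is diametric in $\M^2_k$.

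\textbf{Limit step.} The triangles $S_k$ lie in the bounded sets $\B_k$, so by Blaschke selection (applied to vertex triples) a subsequence of vertices converges to $p^1,p^2,p^3$, giving a limit $S$. Equality of the three edge lengths passes to the limit, since $\|\cdot\|_k\to\|\cdot\|$ uniformly on compact sets. Likewise $R_k(S_k)/D_k(S_k)=j(\M^2_k)$ converges to $R(S)/D(S)=j(\M^2)$, provided $D(S)>0$. Thus $S$ is a diametric Jung-extreme triangle, provided it is nondegenerate.

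\textbf{Main obstacle: nondegeneracy of the limit.} I expect the main difficulty to be excluding collapse of $S$. If $D(S)=0$, this contradicts $R(S)=1$: optimal containment passes to the limit, so $S\optc\B$. If $S$ is a segment, then $R(S)/D(S)=1/2<j(\M^2)$, which contradicts Jung-extremality. If $S$ is degenerate with three collinear vertices and all pairwise distances equal and positive, this is impossible, because in a collinear triple one distance is the sum of the other two. Hence $S$ is a genuine triangle and the corollary follows.
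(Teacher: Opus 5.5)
Your proposal is correct and is essentially the paper's argument. You approximate $\B$ by smooth strictly convex unit balls, take diametric Jung-triangles there via \Cref{thm:SmoothStrictlydiametric}, and pass to a limit using Blaschke selection and $j_k\to j(\M^2)$; for the approximation the paper simply cites Klee, which is safer than your ``add a disc and intersect suitably'', since adding a disc alone does not give strict convexity. Your extra checks make explicit what the paper's short proof takes for granted: the parallelogram case, why the Jung-extreme sets in the approximating planes are triangles rather than segments, and why the limit cannot degenerate (collinear points cannot have equal positive pairwise distances).
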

\begin{proof}
Let $\B$ denote the unit ball of the considered Minkowski plane $\M^2$. Then there exists a sequence $(B_k)_{k\in\N}$ of smooth and strictly convex sets $B_k$ being unit balls of Minkowski planes such that 
$B_k\to \B$ \cite{klee1959some}. By \Cref{thm:SmoothStrictlydiametric} for every $B_k$ there exists a diametric Jung-triangle $S_k\optc B_k$. $(B_k)_{k\in\N}$ is bounded as it is converging and therefore $(S_k)_{k\in\N}$ is bounded as well. By the Blaschke-Selection theorem, there exists a converging subsequence of $(S_k)_{k\in\N}$ , so we may assume $S_k \to S$ for some triangle $S$ which is diametric \wrt $\B$. Let $j_k$ be the Jung constant of the Minkowski plane with unit ball $B_k$. Then, 
$j_k\to j(\M^2)$ and $j_k=\frac{R(S_k)}{D(S_k)} \to \frac{R(S)}{D(S)}$. Thus, $j(\M^2) =  \frac{R(S)}{D(S)}$, which shows that $S$ is a diametric Jung-triangle for $\M^2$. 
\end{proof}

\section{The collapse of the $LDSI$}
\label{subsec:upperleft}

To understand when the $LDSI$ collapses, we need to study intersection properties of the gauge. The \cemph{red}{Helly dimension} $\him(K)$ of $K\in\CK^n$ is the smallest integer $k \in [n]$, with the property that
whenever $(K_i)_{i\in I}$ is a family of translations of $K$, \st~$\bigcap_{i\in I'} K_i=\emptyset$ for all $I'\subset I$ with $|I'|\leq k+1$, then $\bigcap_{i\in I} K_i=\emptyset$. It is well-known that $\him(K)\in\set{1,\hdots,\dim(K)}$ with $\him(K)=1$ if and only if $K$ is a parallelotope \cite{sznagy1954}.

A set $\set{a^1,\hdots,a^{k+1}}$ is \cemph{red}{minimally dependent}, if $S:=\conv\left(\set{a^1,\hdots,a^{k+1}}\right)$ is a $k$-simplex with $0\in\relint(S)$. For a convex body $K$, let $\md(K)$ denote the largest integer $k$ such that there exists a minimally dependent set $\set{a^1,\hdots,a^{k+1}}\subset \ext(K\pol)$. The following proposition is given in \cite{boltyanski1976}.
\begin{proposition}
\label{prop:himmd}
For every convex body $K\in\CK^n$, we have $\him(K)=\md(K)$. 
\end{proposition}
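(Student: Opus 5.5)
The plan is to prove the two inequalities $\him(K)\le\md(K)$ and $\him(K)\ge\md(K)$ separately. Both directions rest on one translation of the problem into linear inequalities. We may assume $0\in\inte(K)$ (if $K$ is lower dimensional, work in $\aff(K)$), so that $K=\set{x: a^\top x\le 1 \text{ for all } a\in \ext(K\pol)}$; the closure of $\ext(K\pol)$ may be needed here. Then $x\in c+K$ if and only if $a^\top x\le 1+a^\top c$ for all $a\in\ext(K\pol)$. Hence a family of translates $c_i+K$ has empty intersection exactly when the linear system $\set{a^\top x\le 1+a^\top c_i:\ a\in\ext(K\pol),\ i\in I}$ is infeasible.

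\emph{Upper bound.} Let $(c_i+K)_{i\in I}$ have empty intersection. By compactness of the translates, a finite subfamily already has empty intersection, so the relevant system is infeasible and can be reduced to finitely many inequalities. By Farkas' lemma together with Carathéodory's theorem, there is a minimal infeasible subsystem $a^{j\top}x\le \beta_j$, $j=1,\dots,m$, whose normals satisfy $\sum\lambda_j a^j=0$ with all $\lambda_j>0$ and $\sum\lambda_j\beta_j<0$. By minimality the $a^j$ are affinely independent, so they are minimally dependent. Each normal comes from one translate, and two inequalities with the same normal are never both needed. The normals are (limits of) points in $\ext(K\pol)$, which gives $m\le\md(K)+1$. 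Closedness has to be handled by a perturbation or limiting argument. Each inequality belongs to one translate, so at most $\md(K)+1$ translates already have empty intersection, and $\him(K)\le\md(K)$.

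\emph{Lower bound.} Let $\set{a^1,\dots,a^{k+1}}\subset\ext(K\pol)$ be minimally dependent with $\sum\lambda_ja^j=0$, $\lambda_j>0$, and assume first that each $a^j$ is an exposed point of $K\pol$; the general case follows by Straszewicz's theorem and approximation. Choose $p^j\in K$ with $a^{j\top}p^j=1$, so that $0\in\bd(K-p^j)$ with outer normal $a^j$. Set $K_j:=K-p^j-\delta_j v^j$ for small shifts, arranged so that $\sum\lambda_j(a^{j\top}(-p^j-\delta_jv^j)+1)<0$; this makes the whole family empty, by the infeasibility criterion above. For any $k$ of these sets the remaining normals are linearly independent, so we can solve $a^{j\top}x=-\eta$ (with $\eta>0$ small) for $j\neq i$. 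This gives a point $x$ near $0$ lying strictly inside the supporting halfspaces of the chosen $K_j$.

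\emph{Main obstacle.} The hardest step is showing that this $x$ actually lies in each $K_j$, and not merely in its supporting halfspace: near $p^j$, $K$ need not look like a halfspace. I would handle this by first rescaling the configuration by a small factor $\varepsilon$ (equivalently, translating by $\varepsilon$-multiples), so that only the local structure of $K$ at $p^j$ matters. Exposedness of $a^j$ is then used to guarantee that $K-p^j$ contains a cone around the direction $-x$ compatible with the chosen inequalities. This shows $\him(K)\ge k$, hence $\him(K)\ge\md(K)$.
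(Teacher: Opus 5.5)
The paper gives no proof of this statement; it simply quotes it from Boltyanski. So I judge your argument on its own.

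\textbf{Minor corrections.} The upper bound $\him(K)\le\md(K)$ is fine after one correction. No limiting argument is needed: first pick finitely many $a\in\ext(K\pol)$ whose half-spaces already bound one translate, then use compactness. The minimal infeasible subsystem then consists of genuine extreme normals. (If you only had limits of extreme points, you would bound $\him(K)$ by the $\md$ of $\cl\ext(K\pol)$, which is not the claim.) In the exposed case of the lower bound, $p^j$ must be a boundary point at which $a^j$ is the \emph{only} outer normal direction. This is exactly what exposedness of $a^j$ in $K\pol$ provides, and it makes $\cl\pos(K-p^j)$ a half-space, so your scaling argument works.

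\textbf{The genuine gap} is the sentence ``the general case follows by Straszewicz's theorem and approximation''. For $k<n$, minimal dependence is not an open condition. The vectors $a^1,\dots,a^{k+1}$ span a proper subspace and satisfy exactly one linear relation, while nearby exposed points are generically linearly independent. So density of exposed points gives nothing; only for $k=n$ is $0\in\inte\conv\set{a^1,\dots,a^{n+1}}$ stable under perturbation. Approximating $K$ by bodies in which the $a^j$ become exposed does not help either, because lower bounds on $\him$ are lost in the limit: parallelograms ($\him=1$) are limits of hexagons ($\him=2$). As written, you have only shown that $\him(K)$ equals the $\md$ taken over \emph{exposed} points of $K\pol$.

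Exposedness also cannot simply be dropped from your construction. Let $K\pol$ be the convex hull of the unit discs centred at $(\pm1,0)$, and take $a^1=(1,1)$, $a^3=(1,-1)$ (extreme, not exposed) and $a^2=(-2,0)$. The only points of $K$ with $(a^1)^\top p=1$, resp.\ $(a^3)^\top p=1$, are $p^1=(0,1)$ and $p^3=(0,-1)$. Since $(1,1)\in N_K(p^1)$ and $(-1,-1)\in N_K(p^3)$, we get $K-p^1-\delta a^1\subset\set{y: y_1+y_2\le-2\delta}$ and $K-p^3-\delta a^3\subset\set{y: y_1+y_2\ge 0}$. These are disjoint, so your shifts fail.

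\textbf{How to close the gap.} Choose the shifts more cleverly, not the normals. For $a\in\ext(K\pol)$ and any $p\in K$ with $a^\top p=1$, $\pos(a)$ is an extreme ray of the pointed cone $N_K(p)$. Indeed, if $a=u+v$ with $u,v\in N_K(p)\setminus\set{0}$, then $a$ is a convex combination of $u/u^\top p$ and $v/v^\top p$, both of which lie in $K\pol$. Now set
$N_j:=N_K(p^j)$, $T_j:=\cl\pos(K-p^j)$, $\Lambda:=(\lambda_j a^j)_j$, and $Z_i:=\set{(b^j)_j:\ b^i=0,\ b^j\in N_j,\ \sum_j b^j=0}$.
\begin{enumerate}
\item The extreme-ray property and the linear independence of $\set{a^j: j\neq i}$ give $\Lambda\notin Z_1+\dots+Z_{k+1}$.
\item Since the $N_j$ are closed and pointed, $Z_1+\dots+Z_{k+1}-\R_{\ge0}\Lambda$ is a closed pointed cone.
\item Take a functional strictly negative on its nonzero elements and write it as $(b^j)_j\mapsto\sum_j (b^j)^\top s^j$.
\item Then $\sum_j\lambda_j(a^j)^\top s^j>0$, so $\bigcap_j (T_j-s^j)=\emptyset$.
\item Also $\sum_j (b^j)^\top s^j<0$ on $Z_i\setminus\set{0}$, so a Gordan-type alternative yields $y$ with $y+s^j\in\inte T_j$ for all $j\neq i$.
\item Since $K-p^j\subset T_j$ and small multiples of points of $\inte T_j$ lie in $K-p^j$, the translates $K-p^j-\varepsilon s^j$ with small $\varepsilon>0$ show $\him(K)\ge k$.
\end{enumerate}
In the example above, $s^1=0$, $s^3=(0,1)$, $s^2=(-M,0)$ with $M>1/2$ works.
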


A generalization to the Helly dimension is that of the $(m.k)$-intersection property. 
\begin{definition}
Let $m,k\in\N$ with $2\leq k < m$. A convex body $C\in\CK^n$ has the \cemph{red}{$(m.k)$-intersection property} (\cemph{red}{$(m.k)$-IP}) if the following applies: For every family of $m$ translations of $C$ such that every $k$ of them intersect, all $m$ translations intersect. Bodies that possess the $(3.2)$-IP are called \cemph{red}{Hanner-polytopes} (\Cref{prop:hanner} shows why "polytopes"). 
\end{definition}
Notice that $K\in\CK^n$ always has the $(\infty,\him(K))$-IP. It is well-known that the $(m.2)$-IP for $m>4$ implies the $(4.2)$-IP. 
            

The following lemma shows a connection between intersection properties and the diameter-circumradius ratio.

\begin{lemma}
         \label{lem:32ipsymmetric}
   Let $\M^n$ be a Minkowski space. Then, the following are equivalent. 
   \begin{enumerate}[i)]
       \item $\B$ has the $(m.2)$-IP. 
       \item Any convex hull $S$ of $m$ points fulfills $D(S)=2R(S)$.   
  \end{enumerate}
\end{lemma}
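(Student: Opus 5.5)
Both directions rest on one translation. For points $p^1,\dots,p^m$ and $\rho>0$, the balls $p^i+\rho\B$ have a common point $c$ if and only if $\norm{p^i-c}\le\rho$ for all $i$ (using $\B=-\B$). Hence $R(S)\le\rho$ for $S=\conv\{p^1,\dots,p^m\}$ if and only if $\bigcap_i (p^i+\rho\B)\neq\emptyset$. Likewise, $p^i+\rho\B$ and $p^j+\rho\B$ intersect if and only if $\norm{p^i-p^j}\le 2\rho$. Since $S$ is the convex hull of the $p^i$, we have $D(S)=\max_{i,j}\norm{p^i-p^j}$. So pairwise intersection of the translates $p^i+\rho\B$ is equivalent to $D(S)\le 2\rho$.

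For i)$\Rightarrow$ii), take $S=\conv\{p^1,\dots,p^m\}$ and set $\rho:=D(S)/2$. Every two of the translates $p^i+\rho\B$ intersect. By the $(m.2)$-IP, applied to the scaled body $\rho\B$ (the property is invariant under dilatation), all $m$ of them intersect. Therefore $R(S)\le D(S)/2$. Together with \eqref{eq:upper} this gives $D(S)=2R(S)$. Degenerate cases such as repeated points or $\rho=0$ are harmless.

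For ii)$\Rightarrow$i), start with any family of $m$ translates $x^i+\B$ that intersect pairwise. Then $\norm{x^i-x^j}\le 2$, so $S:=\conv\{x^1,\dots,x^m\}$ has $D(S)\le 2$. By ii), $R(S)=D(S)/2\le 1$. The infimum in the definition of $R$ is attained by compactness, so there is a $c$ with $\norm{x^i-c}\le 1$ for all $i$. This $c$ lies in every $x^i+\B$.

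The only point needing care is the symmetry of $\B$, which is what turns $c\in p^i+\rho\B$ into $\norm{p^i-c}\le\rho$. This holds throughout, since Minkowski spaces in this paper have $0$-symmetric unit balls. There is also the convention that ``convex hull of $m$ points'' allows coincident points, which makes the statement cover families of fewer than $m$ distinct translates. I expect no genuine obstacle here: the lemma is essentially a reformulation of the definitions.
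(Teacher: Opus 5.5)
Your proposal is correct and follows essentially the same approach as the paper: both directions use the equivalence between $D(S)\le 2\rho$ and pairwise intersection of the translates $p^i+\rho\B$, and between $R(S)\le\rho$ and a common point $c$ of all of them. The only cosmetic difference is that the paper normalizes to $D(S)=2$ in i)$\Rightarrow$ii), while you work with $\rho=D(S)/2$ and use the dilatation invariance of the $(m.2)$-IP, which amounts to the same thing.
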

\begin{proof}
     $ii) \Rightarrow i)$: Let $x^1,\hdots,x^m\in \R^n$ and  $B_i:= x^i + \B$, $i\in[m]$, such that $B_i \cap B_j \neq \emptyset$, $1\leq i < j\leq m$. Then, $\norm{x^i-x^j}\leq 2$, $1\leq i < j\leq m$, and for  $S:=\conv(\set{x^1,\hdots,x^m})$ it follows that $D(S)\leq 2$. Now, $ii)$ implies $R(S)\leq 1$ , \ie~there exists a center $c\in\R^n$ s.t. $ S\subset c+ \B$. Hence, $ \norm{x^i -c}\leq 1$, and therefore $c \in  B_i$, $i\in[m]$. Thus, $ B_1 \cap \hdots \cap B_m \neq \emptyset$, which shows that $\B$ has the $(m.2)$-IP.

      $i) \Rightarrow ii)$: Let $S=\conv(\set{x^1,\hdots,x^m})$ fulfill $D(S)=2$, \ie~$\norm{x^i- x^j}\leq 2$ and $(x^i + \B) \cap (x^j + \B) \neq \emptyset$ , $1\leq i < j\leq m$. By $i)$ there exists $ c\in  (x^1 + \B) \cap \hdots \cap (x^m + \B) $ and therefore $x^i \in c+\B $, $i\in[m]$.  Thus, $S \subset c+\B$, which implies $R(S)\leq 1$ and, because of \eqref{eq:upper}, therefore $R(S)=1$. 
\end{proof}

The following proposition collects results by Hanner \cite{hanner} and Hansen and Lima \cite{hansenlima} regarding intersection properties useful for our purpose. 
\begin{proposition} \label{prop:hanner}
Let $C\in\CK^n$.
\begin{enumerate}[i)]
    \item $C$ has the $(3.2)$-IP if and only if  $C$ is obtained from linear independent centered segments by up to $\dim(C)$ many
    operations of (Minkowski) adding the segments or taking the convex hull (which means that $C$ must be a symmetric polytope). 
    \item $C$ has the $(4.2)$-IP if and only if $C$ is a parallelotope.
\end{enumerate}
\end{proposition}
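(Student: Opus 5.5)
Both parts are classical, so the plan is mainly to reduce them to the tools collected above and to isolate the genuinely hard step, which is Hanner's structure theorem in i).

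\emph{Part ii).} The easy direction: if $C$ is a parallelotope, then $\him(C)=1$ by Sz.-Nagy's characterization. So pairwise intersecting translates always have a common point, and in particular $C$ has the $(4.2)$-IP.

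For the converse, assume the $(4.2)$-IP. The plan has three steps.
\begin{itemize}
\item[(a)] Upgrade to the $(m.2)$-IP for every $m\ge 4$, by induction on $m$. Given $m+1$ pairwise intersecting translates $x^i+C$, I would replace two of them by a suitable translate lying in their intersection, chosen (using the $(4.2)$-IP on every quadruple) so that it still meets all the remaining ones. This reduces to $m$ translates.
\item[(b)] By compactness of translates of $C$, the $(m.2)$-IP for all finite $m$ gives that every pairwise intersecting family of translates has a common point. Hence $\him(C)=1$.
\item[(c)] Sz.-Nagy's theorem then says $C$ is a parallelotope.
\end{itemize}
Alternatively, one can argue by contradiction using \Cref{prop:himmd}. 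If $C$ is not a parallelotope, then $\md(C)\ge 2$, so there are minimally dependent $a^1,a^2,a^3\in\ext(C\pol)$. From the facets of $C$ with these normals one builds three translates forming a "triangle" of pairwise touching sets with no common point. Adding a fourth translate along the plane spanned by the $a^i$ should then give four pairwise intersecting translates without a common point. Via \Cref{lem:32ipsymmetric} (with $m=4$), this amounts to exhibiting a 4-point set $S$ with $D(S)<2R(S)$.

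\emph{Part i).} For the "if" direction I would use induction on the number of operations. Segments clearly have the $(3.2)$-IP. I then check that the $(3.2)$-IP is preserved under the two operations:
\begin{itemize}
\item Minkowski sums along complementary linear subspaces. Translates of $C_1\oplus C_2$ decompose into products of translates, so intersections can be checked coordinatewise.
\item Convex hulls of the two factors (the $\ell_1$-sum). By \Cref{lem:32ipsymmetric}, here I verify that every triangle $S$ satisfies $D(S)=2R(S)$. Write the norm as $\|x_1\|_1+\|x_2\|_2$ and choose a centre componentwise, using the property for each factor.
\end{itemize}

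For the "only if" direction, I would follow Hanner.
\begin{itemize}
\item[(1)] \textbf{Symmetry.} By \Cref{lem:32ipsymmetric}, every triangle has $D=2R$. Applying this to triangles $\conv\{0,x,y\}$ inscribed in $C$ forces $C$ to be centrally symmetric.
\item[(2)] \textbf{Polytopality.} Show that the $(3.2)$-IP forces every exposed face structure to be "box-like" in a suitable sense. In particular, $C$ has finitely many facets.
\item[(3)] \textbf{Splitting.} Show that $C$ splits as a Minkowski sum or as a convex hull of two lower-dimensional symmetric bodies, each again with the $(3.2)$-IP, and then apply induction on the dimension.
\end{itemize}

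\emph{Main obstacle.} Step (3) is where I expect the real difficulty. One has to find a facet $F$ of $C$ such that either $C=F+[-v,v]$, or dually $C\pol$ has this form, which gives the convex-hull case. My first attempt would be to analyse the polar body. The $(3.2)$-IP should translate into the statement that, for every facet $F$ of $C$, either all facets of $C$ meet $F$ or $-F$, or all facets are "parallel" to $F$. Such a dichotomy produces exactly the two Hanner operations. If this analysis becomes too long, I would cite Hanner and Hansen--Lima for this step, as the paper does.
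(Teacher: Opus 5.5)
The paper gives no proof of this proposition; it quotes it from Hanner and Hansen--Lima, so citing the hard parts is legitimate. The problem is that several steps you present as routine do not work.

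\textbf{The convex-hull step in the ``if'' direction of i).} Choosing centres factorwise only gives $R(S)\le (D_1+D_2)/2$, where $D_1,D_2$ are the diameters of the two projections of $S$. In general $D_1+D_2$ exceeds $D(S)$. For instance, in $\ell_1^2$, whose unit ball is $\conv([-u^1,u^1]\cup[-u^2,u^2])$, take the points $(0,0),(1,1),(1,-1)$:
\begin{itemize}
\item they have $D=2$;
\item the factorwise midpoints give the centre $(\tfrac12,0)$ with radius $\tfrac32$;
\item the true centre is $(1,0)$, which splits the radius as $1+0$ for the first point and $0+1$ for the other two.
\end{itemize}
So each factor must handle balls of \emph{different} radii. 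The natural repair is to carry the $(3.2)$-IP for homothets through the induction and to choose per-point splits $s_i+t_i\le 1$ compatible with the pairwise distances in both factors.

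A sanity check shows your sketch cannot be right as stated: it never uses $m=3$. Read verbatim, it would prove that $\ell_1$-sums preserve the $(4.2)$-IP. Then the octahedron $\B_1$, the $\ell_1$-sum of the square $\ell_1^2\cong\ell_\infty^2$ and a segment, would have the $(4.2)$-IP, contradicting ii). Indeed, the simplex with vertices at the centres of four non-opposing facets of $\B_1$ has $D=\tfrac43<2=2R$.

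\textbf{The symmetry step (1).} It applies \Cref{lem:32ipsymmetric}, which presupposes $\B=-\B$. Its proof uses that $x^i+\B$ and $x^j+\B$ meet iff $\norm{x^i-x^j}\le 2$, whereas for a general $C$ the condition is $x^i-x^j\in C-C$. So the lemma cannot be used to derive symmetry. For $n\ge 3$, a three-point condition does not yield $C\subset c-C$ via Helly either.

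\textbf{Step (a) of ii).} A translate of $C$ lying in $C_1\cap C_2$ exists only if $C_1=C_2$, since $C+v\subset C$ forces $v=0$ (compare support functions). If the new translate is only required to meet the remaining ones, a common point of the reduced family need not lie in $C_1\cap C_2$, so the induction does not close. The implication from the $(4.2)$-IP to parallelotopes is the nontrivial half of Hanner's result, not a routine upgrade.

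\textbf{The alternative route for ii).} It fails because $\md(C)\ge 2$ does not yield a minimally dependent triple. For $C=\B_1$ the polar is the cube $[-1,1]^3$, and $\md(\B_1)=3$ is witnessed by four alternating vertices. Yet no three vertices of the cube have $0$ in the relative interior of their triangle. This has to be so: $\B_1$ has the $(3.2)$-IP, so three pairwise intersecting translates without a common point cannot exist there. Had such a triple existed, your fourth translate would be superfluous, since repeating one of the three already violates the $(4.2)$-IP. Detecting the failure of the $(4.2)$-IP for $\B_1$ needs a genuinely four-point configuration, such as the simplex $T$ in the proof of \Cref{thm:c3diagram}.

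\textbf{What survives.} As it stands, your proposal establishes only two things: that parallelotopes have the $(4.2)$-IP (via Sz.-Nagy), and that segments have the $(3.2)$-IP and Minkowski sums preserve it. Everything else has to be cited, as the paper does, or proved by different arguments.
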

Using the intersection properties we may now characterize when the $LDSI$ collapses.

\begin{proof}[Proof of \Cref{thm:rbound}]
In general, $r(K)=0$ if and only if $K$ is subdimensional.
If $n=2$, only segments are subdimensional and 
all segments are symmetric and therefore fulfill \eqref{eq:upper} with equality.  
If $\B$ is a parallelotope, $D(K)=2R(K)$ for all $K$ by \Cref{prop:parallelotopes}. 


Assuming that the $LDSI$ collapses for $n\geq 4$ implies that all lower dimensional sets, and therefore any convex hull of 4 or less points, optimally contained in $\B$ have diameter 2. By \Cref{lem:32ipsymmetric} this implies that $\B$ has the $4.2$- IP. But then, $\B$ is a parallelotope by \Cref{prop:hanner} $ii)$. 

Similarly, if $n=3$ and the $LDSI$ collapses, we need $D(S)=2$ for all triangles $S\optc \B$. Then, $\B$ has the $(3.2)$-IP by \Cref{lem:32ipsymmetric}, and therefore 
\Cref{prop:hanner} $i)$ implies that it is a parallelotope or a cross-polytope.

It therefore remains to be shown that the LDSI actually collapses if $\B$ is a 3-dimensional cross polytope.
By affine invariance, we may assume that $\B$ is the 
regular cross-polytope $\conv(\set{\pm u^1,\hdots,\pm u^4})$. 
Let $K\optc \B$  
be subdimensional. By \Cref{prop:opt}, there are touching points $p^1,\hdots,p^k\in K\cap \bd(\B)$ with $k\in\set{2,3,4}$ and corresponding outer normals. 
If $k\in\set{2,3}$, we know $D(K)=2R(K)$ since there exists a segment or a planar triangle contained in $K$ with the same circumradius.
Assume $k=4$ and that we need all four touching points. 
The cross-polytope has four pairs of facets. If two of the $p^i$, $i=1,2,3,4$, 
are situated on opposing facets, we have $D(K)=2$. Otherwise, the $p^i$ belong to the relative interiors of four facets such that two of those facets intersect in exactly one vertex (\cf~\Cref{fig:crosspoyltope0}).

\begin{figure}
    \centering
\begin{tikzpicture}[scale=2]
\def\pA{(1,0,0)}
\def\pB{(-1,0,0)}
\def\pC{(0,1,0)}
\def\pD{(0,-1,0)}
\def\pE{(0,0,1)}
\def\pF{(0,0,-1)}
\def\pGa{(0.2,0.2,0.6)}
\def\pGb{(0.2,0.6,0.2)}
\def\pGc{(0.6,0.2,0.2)}
\def\pHa{(-0.2,-0.2,0.6)}
\def\pHb{(-0.2,-0.6,0.2)}
\def\pHc{(-0.6,-0.2,0.2)}
\def\pIa{(-0.2,0.2,-0.6)}
\def\pIb{(-0.2,0.6,-0.2)}
\def\pIc{(-0.6,0.2,-0.2)}
\def\pJa{(0.2,-0.2,-0.6)}
\def\pJb{(0.2,-0.6,-0.2)}
\def\pJc{(0.6,-0.2,-0.2)}

\def\k{0.6}
\def\km{0.2}
    \draw[thick] \pA -- \pC;
    \draw[thick] \pA -- \pD;
    \draw[thick] \pA -- \pE;
    \draw[thick,dashed] \pA -- \pF;

    \draw[thick] \pB -- \pC;
    \draw[thick] \pB -- \pD;
    \draw[thick] \pB -- \pE;
    \draw[thick,dashed] \pB -- \pF;

    \draw[thick] \pC -- \pE;
    \draw[thick,dashed] \pC -- \pF;

    \draw[thick] \pD -- \pE;
    \draw[thick,dashed] \pD -- \pF;

    \foreach \p in {\pA, \pB, \pC, \pD, \pE, \pF} {
        \filldraw \p circle (1pt);
    }

    \node at \pA [anchor=west] {};
    \node at \pB [anchor=east] {};
    \node at \pC [anchor=south] {};
    \node at \pD [anchor=north] {};
    \node at \pE [anchor=west] {};
    \node at \pF [anchor=east] {};
    \fill[orange, opacity=0.2] \pB -- \pC -- \pF -- cycle; 
      \fill[orange, opacity=0.2] \pA -- \pF -- \pD -- cycle;
      \fill[orange, opacity=0.3] \pB -- \pD -- \pE -- cycle;
      \fill[orange, opacity=0.3] \pA -- \pE -- \pC -- cycle;
\tkzLabelPoint[above](1.1,0,0){$u^2$}
\tkzLabelPoint[above](-1.1,0,0){$-u^2$}
\tkzLabelPoint[above](0,1,0){$u^3$}
\tkzLabelPoint[below](0,-1,0){$-u^3$}
\tkzLabelPoint[above](0.2,0.1,1){$u^1$}
\tkzLabelPoint[above](0.2,0.1,-1){$-u^1$}
\end{tikzpicture}
    \caption{The cross-polytope $\B_1$. 
    The four touching points are situated on non-opposing facets.}
    \label{fig:crosspoyltope0}
\end{figure}

Let us assume, we have a point on $F_1:=\conv(\set{u^1,u^2,u^3})$, $F_2:=\conv(\set{-u^1,-u^2,u^3})$, $F_3:=\conv(\set{u^1,-u^2,-u^3})$, and $F_4:=\conv(\set{-u^1,u^2,-u^3})$. By $H$ we denote the plane containing the quadrangle of touching points and by $H^-$ and $H^+$ the two closed half-spaces induced by $H$. Then, not all three of the vertices of any of the touched facets can be situated in one of the half-spaces $H^-$ or $H^+$. Furthermore, there has to be a pair of vertices of the cross-polytope which is strictly separated by $H$. W.l.o.g., assume $u^3\in \inte(H^+)$ and $-u^3\in \inte(H^-)$. We show that these conditions lead to a contradiction. 

We consider the facet $F_1$. Since $u^3\in \inte(H^+)$, at least one remaining vertex needs to be contained in $H^-$. We may assume that $u^2\in H^-$. \\
Case 1: $u^1,u^2\in H^-$. Then, it follows that $-u^1,-u^2\in H^+$ because of the facets containing $-u^3$, namely $F_3$ and $F_4$. But then, $F_2$ is contained in $H^+$, a contradiction.\\
Case 2: $u^1\in H^+$ and $u^2\in H^-$. Then, $-u^1\in H^+$ because of facet $F_4$. In this case, the condition on $F_2$ 
implies $-u^2\in H^-$. However, this implies that $H$ is the $x_1,x_2$-plane, which contradicts the fact that the touching points are contained in the relative interior of the facets. 
\end{proof}

\section{Properties of the $ISI$}
\label{subsec:left}
The 
$ISI$ collapses if and only if there exists a subdimensional set $K$ with $R(K)=1$ and $D(K)=1/j(\M^n)$.

\begin{theorem}\label{thm:lowerleftcollaps}
Let $\M^n$ be a Minkowski space. 
    If $\him(\B)\leq n-1$, the $ISI$ collapses.
\end{theorem}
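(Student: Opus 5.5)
By the remark preceding the statement, it suffices to exhibit a subdimensional set $K$ with $R(K)=1$ and $D(K)=1/j(\M^n)$. In other words, some set attaining equality in the Jung inequality \eqref{eq:generalJung} must be lower dimensional. The plan is to start from an arbitrary Jung-extreme set and extract from it such a subdimensional set, using optimal containment and the Helly dimension.

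\textbf{Step 1: a Jung-extreme starting set.} By \cite{AsymmetryJung} there exists $S_J$ with $R(S_J)/D(S_J)=j(\M^n)$, and we may scale and translate so that $S_J\optc\B$, hence $R(S_J)=1$. By \Cref{prop:opt}, for some $k\in\{2,\dots,n+1\}$ there are touching points $p^1,\dots,p^k\in S_J\cap\bd(\B)$. Their outer normals $a^1,\dots,a^k\in\ext(\B\pol)$ are affinely independent and satisfy $0\in\relint(\conv(\{a^1,\dots,a^k\}))$. Thus $\{a^1,\dots,a^k\}$ is a minimally dependent subset of $\ext(\B\pol)$ of size $k$.

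\textbf{Step 2: bounding the number of touching points.} By \Cref{prop:himmd}, $k-1\le \md(\B)=\him(\B)\le n-1$, so $k\le n$. Set $K:=\conv(\{p^1,\dots,p^k\})$. Since $K$ is the convex hull of at most $n$ points, it has dimension at most $n-1$, i.e.\ it is subdimensional.

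\textbf{Step 3: $K$ is still Jung-extreme.} We have $K\subset S_J\subset\B$, and the same points $p^i$ with the same normals $a^i$ witness $K\optc\B$ via \Cref{prop:opt}. Hence $R(K)=1$. Monotonicity of the diameter gives $D(K)\le D(S_J)=1/j(\M^n)$. The Jung inequality gives $R(K)\le j(\M^n)D(K)$, i.e.\ $D(K)\ge 1/j(\M^n)$. Therefore $D(K)=1/j(\M^n)$ and $r(K)=0$, so the Jung boundary contains the image of a subdimensional set. This image lies on the LDSI, and the ISI collapses.

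\textbf{Main obstacle.} The delicate point is Step 2: we need the size $k$ of the minimally dependent normal family from \Cref{prop:opt} to be controlled by $\md(\B)$. This requires that the normals $a^i$ can be taken in $\ext(\B\pol)$, which \Cref{prop:opt} guarantees because $\B$ is the container. It also requires the correct index shift: a minimally dependent set of $k$ vectors spans a $(k-1)$-simplex, so it counts as dimension $k-1$ in the definition of $\md$. With both in place, $\him(\B)\le n-1$ gives $k\le n$, which is exactly what makes $K$ subdimensional.
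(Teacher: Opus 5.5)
Your proposal is correct and is essentially the paper's proof. \Cref{prop:himmd} bounds the minimally dependent normals from \Cref{prop:opt} by $n$, so the convex hull of the touching points is a subdimensional set with circumradius $1$. Where the paper is terse, you make explicit that you apply this to a Jung-extreme body and pin the hull's diameter to $1/j(\M^n)$ using monotonicity together with the Jung inequality.
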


\begin{proof}
    If $\him(\B)\leq n-1$, then by \Cref{prop:himmd}, we have $\md(\B)\leq n-1$. Then, for any $K\optc \B$, we can choose at most $n$ affinely independent outer normals as in \Cref{prop:opt}. Thus, the diameter of $K$ is larger than the one of the convex hull of at most $n$ touching points, which is a subdimensional set. Thus, there exists a subdimensional set $K$ with $R(K)=1$ and $D(K)=1/j(\M^n)$ and the set of points provided by the equality case of the $ISI$ collapses.
\end{proof}

Obviously, there are Minkowski spaces with $\him(\B)=n$ (like the Euclidean spaces) such that the $ISI$ does not collapse. However, one cannot replace the \enquote{if} in Theorem \ref{thm:lowerleftcollaps} by an \enquote{if and only if}.

\begin{example}
\label{thm:hellynexample}
    There exist Minkowski spaces $\M^n$ with $\him(\B)=n$ such that the $ISI$ collapses.
\end{example}
\begin{proof}
    We construct an example using the following claim. Let $\B$ fulfill
    \begin{equation}
         \B' \times [-j,j] \subset \B \subset   \B' \times [-1,1],
    \end{equation}
    with $h_{\B}(u_n)=1$ and $j\in[1/(2j(\M')),1]$
for some 0-symmetric $\B'$ with $\dim(\B')=n-1$ and $\M'$ being the $(n-1)$-dimensional Minkowski space induced by $\B'$.
Then, there exists a subdimensional $K$ with $R(K)=1$ and $D(K)=1/j(\M)$ and the $ISI$ collapses.

If we take the Minkowski sum of a convex body $C$ with $\him(C)=n$, such as the Euclidean ball, with a segment $L$, we know $\him(C+L)=\max\set{\him(C),\him(L)}=n$. If the segment $L$ is long enough, $C+L$ fulfils the condition of \eqref{eq:prismlikecondition}.

Now, we prove the above claim. 
Let $K'\optc \B'$ be a Jung-extreme body in $\M'$. Then, 
$R(K'\times \set{0})=1$ and $D(K'\times \set{0})=1/j(\M')$. 

We now show that $j(\M')=j(\M)$. Let $K\optc \B$. 
First, assume we need outer normals not contained in $\lin(\B')$ to describe optimal containment as in \Cref{prop:opt}. 
Then, $K \cap \cl(\B\setminus (\B' \times [-1,j]))$ and $K \cap \cl(\B\setminus (\B' \times [-j,1]))$ are non-empty. 
Taking the breadth in the direction of the $n$-th unit vector, we obtain  
\begin{equation*}
\label{eq:prismlikecondition}
    D(K)\geq b_{u^n}(K)= h_K(u^n)+h_K(-u^n)\geq 2j \geq  \frac{1}{j(\M')}
    . 
\end{equation*} 

Now assume outer normals from $\lin(\B'))$ suffice. 
Denoting the projection of $X$ onto a linear subspace $L$ by $X|L$ we obtain
$R(K|{\lin(\B')})=1$ and therefore
\begin{align*}
    D(K) \geq \max_{s \in \lin(\B')\setminus \set{0}} b_s(K) 
    = D(K|{\lin(\B')}) 
    \geq \frac{1}{j(\M')}.
\end{align*}
However, since $R(K')/D(K')=1/j(\M')$ we finally conclude $j(\M)=j(\M')$, which proves the claim. 

\end{proof}

\begin{remark} \label{rem:horizontalisi}
    Let us notice that the set of left-most points might not be 
    connected. 
   This, for example, can be seen from \cite[Theorem 1.4]{ngons}. There, a Minkowski plane is considered with a regular hexagon as the unit sphere. In this setting $D(K)<2R(K)$ is only possible if $4r(K)> R(K)$. 
    Thus, in this setting, the left-most point with $D(K)=2R(K)$ is not connected to the left-most points with $D(K)<2R(K)$.  
        However, since one can connect every left-most point horizontally to its Scott-completions, the entire left boundary of the diagram is described by the left-most points and horizontal segments.  
    By considering $k$-gons with increasing $k$ we may create more such examples with discontinuities in the left-most points. For example, one may consider the regular $12$-gon with vertices $v^0,\hdots,v^{11}$. 
    Here, we obtain a horizontal segment on the $ISI$ 
   which is filled by a family of isosceles triangles $T_\lambda:=\conv\left(\left\{\frac{1}{2}(v^0+v^1), \lambda v^6 +(1-\lambda)v^5, \lambda v^7 +(1-\lambda)v^8 \right\}\right)$ for $\lambda\in[0,\tfrac{1}{2}]$, \cf~\Cref{fig:12gon}, all having a diameter $1+\tfrac{\sqrt{3}}{2}$ and decreasing inradius.
    (The computation of the exact inequality would go beyond the scope of this 
   paper and is therefore left out.)

    \begin{figure}[ht]
        \centering
\begin{tikzpicture}[scale=0.8]
    \tkzDefPoint(0,0){O}

\tkzDefPoint(-0.5,-0.6){p0}
\tkzDefPoint( 0.5,-0.6){p1}

\tkzDefRegPolygon[side, sides=12, name=V](p0,p1)

\tkzDrawPolygon(V12,V1,V2,V3,V4,V5,V6,V7,V8,V9,V10,V11)

\tkzLabelPoint[below](p0){$v^0$}
\tkzLabelPoint[below](p1){$v^1$}
\tkzLabelPoint[above](V6){$v^5$}
\tkzLabelPoint[above](V7){$v^6$}
\tkzLabelPoint[above](V8){$v^7$}
\tkzLabelPoint[above](V9){$v^8$}

\tkzDefMidPoint(V6,V7) \tkzGetPoint{V67}
\tkzDefMidPoint(V8,V9) \tkzGetPoint{V89}
\tkzDefMidPoint(V1,V2) \tkzGetPoint{V12a}
\tkzDrawPolygon[dgreen](V12a,V67,V89)
\tkzDrawPolygon[dgreen](V12a,V6,V9)

\tkzDrawPoints[fill=black](V12,V1,V2,V3,V4,V5,V6,V7,V8,V9,V10,V11)
\tkzDrawPoints[fill=black](V12a, V67,V89)

\end{tikzpicture}

        \caption{The family of triangles described in \Cref{rem:horizontalisi}, which are mapped to a horizontal segment on the left boundary.}
        \label{fig:12gon}
    \end{figure}
    
    
    
\end{remark}

\FloatBarrier
Next, we study the bodies mapped to left-most points. We denote the largest circumradius-diameter ratio attained by subdimensional sets by $j_{n-1}$. 
To do so,in this section we simply say that a set $X\subset  \bd(\B)$ is open 
if there is an open subset $V$ of $\R^n$ with $V\cap \bd(\B)=X$ and the interior of $X$ is (re-)defined as $\inte(X):=\set{x\in X: (x+\epsilon \B) \cap \bd(\B)\subset X \text{ for some } \epsilon>0}$. This notion is analogous to the definition of the relative interior in affine subspaces.
The boundary is then defined as $\bd(X)=\cl(X)\setminus\inte(X)$. 
Furthermore, $X$ is \cemph{red}{path-connected}, if for any pair of points $x,y\in X$ there exists a continuous path $\gamma:[0,1]\to X$ with $\gamma(0)=x$ and $\gamma(1)=y$.
The following lemma shows that we may connect any point in the diagram, which is induced by a simplex optimally contained in the unit ball, to a point induced by an optimally contained set 
with diameter at least $\frac{1}{j_{n-1}}$, while not increasing the inradius. This is done by continuously moving one vertex until we reach such a diameter 
while keeping optimal containment. 
\begin{lemma}
    \label{lem:connectsimplices}
    Let $S:=\conv(\set{p^1,p^2,\hdots,p^{n+1}})\optc \B$ be a full-dimensional simplex with 
    $D(S)<1/j_{n-1}$. 
    Then, for every $D\in[D(S),1/j_{n-1})$ there exists a simplex $S_D$ with $D(S_D)=D$, $R(S_D)=1$, and $r(S_D)\leq r(S)$.  
\end{lemma}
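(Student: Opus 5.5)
The plan is to deform $S$ continuously inside $\B$, keeping optimal containment, until the diameter reaches $1/j_{n-1}$, and to control the inradius along the way with \Cref{cor:bdarea}. By \Cref{prop:opt}, $S\optc \B$ is witnessed by touching points $p^{i}\in\bd(\B)$ with outer normals $a^i\in\ext(\B\pol)$ and $0\in\relint\conv\{a^i\}$. Since $D(S)<1/j_{n-1}$, no subdimensional set with circumradius $1$ can have diameter $\leq D(S)$. Hence the optimal containment of $S$ cannot be witnessed by touching points spanning a proper affine subspace: its convex hull would be subdimensional, have circumradius $1$, and diameter at most $D(S)$. So all $n+1$ vertices are needed and lie on $\bd(\B)$. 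Rather than varying a single vertex as a point of $\R^n$, we let one vertex, say $p^1$, vary on $\bd(\B)$ while $p^2,\dots,p^{n+1}$ stay fixed.

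Consider the subset $X\subset \bd(\B)$ of points $q$ lying strictly on the same side of $H:=\aff\{p^2,\dots,p^{n+1}\}$ as $p^1$, such that $\conv\{q,p^2,\dots,p^{n+1}\}\optc\B$. Its diameter is a continuous function $d(q)$ on $X$. Note that $d(q)\ge 1/j_{n-1}$ is forced once $q$ approaches $H$, because the limit set is then subdimensional and optimally contained. More precisely, continuity of $R$ together with the circumradius-$1$ bound forces this. To obtain an intermediate value, we therefore move $p^1$ along a path in $\bd(\B)$ from $p^1$ towards $H\cap\bd(\B)$, for instance along the intersection of $\bd(\B)$ with a 2-plane through $p^1$ and a point of $\relbd$ of the cap cut off by $H$. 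Along this path, $R=1$ holds as long as the optimality witnesses persist, and the intermediate value theorem gives a simplex $S_D$ with $D(S_D)=D$ for each $D\in[D(S),1/j_{n-1})$.

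The inradius comparison comes from \Cref{cor:bdarea}. We want the new vertex $q$ to lie in $\conv_{\B,\{p^2,\dots,p^{n+1}\}}$ of a set of points whose simplices all have inradius at most $r(S)$. A cleaner route is to choose $S_D$ as a minimizer. Among all $q\in\bd(\B)$ on the correct side with $\conv\{q,p^2,\dots,p^{n+1}\}\optc\B$ and diameter exactly $D$, take one minimizing $r$; existence follows by compactness. We then compare with $S$: the original vertex $p^1$ lies in the $\B$-projected convex hull of $q$ and a point $q'$ near $H$. Since $r$ tends to $0$ as $q'\to H$, \Cref{cor:bdarea} yields $r(S)\ge\min\{r(S_q),r(S_{q'})\}$, and hence $r(S)\ge r(S_q)$ when $r(S_{q'})$ is small. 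This heuristic needs to be made rigorous by choosing the path of $q$ as $\bd(\B)\cap$ the cone from a vertex $p^j$ through $p^1$ and $q'$.

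The main obstacle is maintaining optimal containment, i.e.\ $R=1$, along the moving path, and ensuring that the path reaches diameter values up to $1/j_{n-1}$ rather than leaving the region where the normals still positively span $0$. I would handle this by fixing the outer normals $a^2,\dots,a^{n+1}$ at the fixed vertices and letting $q$ range over points of $\bd(\B)$ having an outer normal $a$ with $0\in\conv\{a,a^2,\dots,a^{n+1}\}$; this set is closed and path-connected near $p^1$. If the path exits this set before reaching $H$, then at the exit point $0$ lies in the convex hull of $n$ normals, the corresponding touching points span a subdimensional optimally contained set, and its diameter is already $\ge 1/j_{n-1}$. Continuity of $d$ then again provides every intermediate diameter.
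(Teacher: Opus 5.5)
Your treatment of the diameter is fine and matches the paper's set-up. Let the free vertex range over the set $Q$ of points of $\bd(\B)$ that have a nonzero outer normal in $\pos(\set{-a^2,\dots,-a^{n+1}})$. This keeps $R=1$. Wherever this set is left, the normal lies on the boundary of that cone, so at most $n$ touching points witness optimal containment and the diameter is already $\ge 1/j_{n-1}$. Continuity of the diameter then gives every value in $[D(S),1/j_{n-1})$.

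\textbf{The gap.} The bound $r(S_D)\le r(S)$, which is the real content of the lemma, is not established. You correctly place the original vertex $p^1$ inside the projected hull of $q$ and $q'$, so \Cref{cor:bdarea} gives $r(S)\ge\min\set{r(S_q),r(S_{q'})}$. But you then choose $q'$ near $H$ precisely so that $r(S_{q'})$ is small. In that case the minimum is $r(S_{q'})$, and nothing at all follows about $r(S_q)$. To conclude $r(S_q)\le r(S)$ you would need $r(S_{q'})\ge r(S_q)$, the opposite of what you arrange.

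Your earlier sentence, asking that $q$ lie in the projected hull of points with inradius at most $r(S)$, also has the direction reversed: the corollary only bounds the inradius of the inner simplex from below. Taking $S_D$ as a minimizer of $r$ over the diameter-$D$ level set does not help either. It only works if $p^1$ is bracketed by admissible points, i.e.\ points of $Q$ whose simplices have diameter exactly $D$.

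\textbf{How the paper closes it.} It uses a topological argument on the sublevel set $Q^{\le}=\set{q\in Q: r(\conv(\set{q,p^2,\dots,p^{n+1}}))\le r(S)}$. Suppose the path component $X$ of $p^1$ in $Q^{\le}$ never met $Q_{\bd}$. Then $X$ would lie in the interior of $Q$ and could be enclosed in a small neighbourhood $A\subset Q$ whose boundary lies where the inradius exceeds $r(S)$. Two points $q^1,q^2\in\bd(A)$ with $p^1\in\conv_{\B,p^i}(\set{q^1,q^2})$ would then contradict \Cref{cor:bdarea}. Hence there is a path from $p^1$ into $Q_{\bd}$ along which $r\le r(S)$, and continuity of the diameter finishes.

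\textbf{A more direct repair within your framework.}
\begin{enumerate}
\item Walk along $\bd(\B)\cap\lin(\set{p^1,p^j})$ from $p^1$ in both directions.
\item Note that $p^j\notin Q$. Otherwise the facet opposite $p^1$ would be optimally contained in $\B$ and would have diameter at least $1/j_{n-1}>D(S)$.
\item Hence each walk leaves $Q$ at a point of $Q_{\bd}$. The intermediate value theorem gives first hits $q^1,q^2\in Q$ on opposite sides of $p^1$ whose simplices have diameter exactly $D$. These stay on $p^1$'s side of $H$.
\item Planar convexity gives $p^1\in\conv_{\B,p^j}(\set{q^1,q^2})$. The degenerate case $p^j\in[q^1,q^2]$ would put $a^j$ into $N_{\B}(q^1)$ and thus $q^1\in Q_{\bd}$.
\item \Cref{cor:bdarea} then makes one of the two simplices the required $S_D$.
\end{enumerate}
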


\begin{proof}
   We show that there is a continuous path $\gamma:[0,1]\mapsto \bd(\B)$ with $\gamma(0)=p^1$ such that 
    \begin{align*}
        &S_t\optc \B,\\
        &r(S_t)\leq r(S),\text{ and}\\
        &D(S_1)\geq \frac{1}{j_{n-1}},
    \end{align*}
   where $S_t:=\conv(\set{\gamma(t),p^2,\hdots,p^{n+1}})$, $t\in[0,1]$.
   The claim then follows by continuity of the diameter. 

    By the assumptions on $S$ we have $\set{p^1,p^2,\hdots,p^{n+1}}\subset \bd(\B)$ and outer normals $a^i$ to $\B$ in 
    $p^i$, $i\in\set{2,\hdots,n+1}$, with $0\in \inte(\conv(\set{a^1,a^2,\hdots,a^{n+1}}))$.  
     We define
\begin{align*}
    Q&:= \set{p\in \bd(\B): \pos(\set{-a^2,\hdots,-a^{n+1}})\cap N_{\B}(p)\neq\emptyset },\\
    Q_{\bd}&:= \set{p\in Q: \bd(\pos(\set{-a^2,\hdots,-a^{n+1}}))\cap N_{\B}(p)\neq\emptyset },\\
    Q^>&:= \set{p\in Q: r(\conv(\set{p,p^2,\hdots,p^{n+1}})>r(S) }, \text{ and}\\
    Q^{\leq}&:= \set{p\in Q: r(\conv(\set{p,p^2,\hdots,p^{n+1}})\leq r(S) }.\\
\end{align*}
Then, $Q$ is closed and path-connected and 
for every $p \in Q$ the set $\conv(\set{p,p^2,\hdots,p^{n+1}})$ is optimally contained in $\B$. If $p\in Q_{\bd}$, there exists an outer normal at $p$ contained in $\bd(\pos(\set{-a^2,\hdots,-a^{n+1}}))$. As explained in the remark after \Cref{prop:opt}, this implies that we may remove at least one of the vertices while preserving optimal containment. In that case the diameter is at most the diameter of a subdimensional set, \ie~
\begin{equation*}
D(\conv(\set{p,p^2,\hdots,p^{n+1}}))\geq 1/j_{n-1}.
\end{equation*}
Hence, $p^1\notin Q_{\bd}$, \ie~$N_{\B}(p^1)\setminus\set{0}\subset \inte(\pos(\set{-a^1,\hdots,-a^{n+1}}))$, and obviously $p^1\in Q^{\leq}$.
Thus, it suffices to show that there exists a
continuous path $\gamma:[0,1]\mapsto Q^{\leq}$ with $\gamma(0)=p^1$ and $\gamma(1)
\in Q_{\bd}$. 

Assume this is not the case. Denoting
\begin{equation*}
    X:=\set{p\in Q^{\leq}: p\text{ and } p^1 \text{ are path-connected in $Q^{\leq}$}},
\end{equation*}
the assumption may be rewritten as $X\cap Q_{\bd}=\emptyset$.

Let $q\in \bd(Q)$. 
There is a sequence of points in $\bd(\B)$ converging to $q$ with no outer normals in $\pos(\set{-a^1,\hdots,-a^{n+1}})$. Then, $q$ has to have an outer normal in $\bd(\pos(\set{-a^2,\hdots,-a^{n+1}}))$. 
Hence, $\bd(Q)\subset Q_{\bd}$, which implies $X\subset\inte(Q)$.

The path-connected components of $Q^{\leq}$ are closed because $Q^{\leq}$ is closed. Thus, $X$ and $Q^{\leq}\setminus{X}$ are closed and disjoint. Furthermore, $Q_{\bd}$ is closed. Together, we obtain that $Q_{\bd}\cup (Q^{\leq}\setminus X)$ and $X$ are two disjoint compact sets. Thus, there is an $\epsilon>0$ such that $A := (X+\epsilon\B)\cap \bd(\B)$ and $Q_{\bd}\cup (Q^{\leq}\setminus X)$ are disjoint and since $X\subset\inte(Q)$ we may choose $\epsilon$ small enough such that $A 
\subset Q$. 
Now, any point in $A \setminus X$ obviously does not belong to $Q^{\le}$ either, which implies
$\bd(A)\subset Q^>$ and therefore $p^1\in \inte(A)$. 
However, this means that for any two point $q^1,q^2 \in \bd(A)$ such that $p^1\in\conv_{\B,p^i}(\set{q^1,q^2})$, for some $i \in \set{2,\dots,n+1}$ (which one may obtain, e.g., by choosing $q^1 \in \bd(A)$ arbitrarily and $q^2\in\bd(A)\cap \aff(\set{p^1,p^i, q^1})$) we have
$q^1,q^2\in Q^>$, $p^1\in Q^{\leq}$, and therefore
\begin{equation*}
   \min\set{
   r(\conv(\set{q^1,p^2,\hdots,p^{n+1}})), r(\conv(\set{q^2,p^2,\hdots,p^{n+1}}))
   } >r(S), 
\end{equation*}
contradicting the quasiconcavity property in \Cref{cor:bdarea}.

\end{proof}

Using this lemma, we now prove \Cref{thm:left-mostpoint-ndim}. 

\begin{proof}[Proof of \Cref{thm:left-mostpoint-ndim}]
    \begin{enumerate}[i)]
        \item Let $K\optc\B$ be mapped to a left-most point. For $D(K)\geq \frac{1}{j_{n-1}}$ this immediately implies that $K$ is subdimensional. Thus, let us assume $D(K)< \frac{1}{j_{n-1}}$. By \Cref{prop:opt}, there exist touching points $p^1,\hdots, p^{n+1}\in K\cap\bd(\B)$, %
         such that $S:=\conv(\set{p^1,\hdots, p^{n+1}})\optc \B$,  $D(S)\leq D(K)<\frac{1}{j_{n-1}}$,  and $r(S)\leq r(K)$.
        By \Cref{lem:connectsimplices} there exists a simplex $S_{D(K)}\optc\B$ with $r(S_{D(K)})\leq r(S)\leq r(K)$ and $D(S_{D(K)})=D(K)$. Since $K$ was mapped to a left-most point, we obtain $r(S_{D(K)})=r(K)$ and $S_{D(K)}$ is mapped to the same point as $K$.
        \item Assume there are bodies $K_1,K_2\optc\B$ mapped to left-most points with $\frac{1}{j_{n-1}}>D(K_1)\geq D(K_2)$ and $r(K_1)> r(K_2)$. Then, by 
        $i)$, we may assume that both, $K_1$ and $K_2$, 
        are simplices. 
        Now, we apply \Cref{lem:connectsimplices} to $K_2$, to show that there is a simplex $S_{D(K_1)}\optc\B$ with $D(S_{D(K_1)})=D(K_1)$ and $r(S_{D(K_1)})\leq r(K_2)<r(K_1)$. This contradicts $K_1$ being mapped to a left-most point.
    \end{enumerate}
\end{proof}


Restricting to the planar setting, we prove two additional results. We show that the left-most points are 
attained by segments and triangles only and that we may always choose the triangles to be isosceles.
\begin{theorem}
\label{thm:isiplanar}
Let $\M^2$ be a Minkowski plane. 
\begin{enumerate}[i)]
    \item  If a body $K\in\CK^2$ is mapped to a left-most point, it is a segment or a triangle.
    \item For each left-most point, there is a segment or an isosceles triangle mapped onto it. 
\end{enumerate}
\end{theorem}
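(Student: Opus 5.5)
Throughout, normalize $K\optc\B$ with $R(K)=1$. In the plane $j_{n-1}=j_1=1/2$, since every segment $L$ satisfies $D(L)=2R(L)$. Hence the case $D(K)=2$ and the case $D(K)<2$ can be handled separately.

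\emph{Part i).} First let $D(K)=2$. The left-most value is then $r_2=0$, which segments attain. So any $K$ mapped to this point has $r(K)=0$, i.e.\ $K$ is a segment. Now let $D(K)<2$. By \Cref{prop:opt} there are three touching points $p^1,p^2,p^3\in K\cap\bd(\B)$ whose outer normals satisfy $0\in\inte(\conv\{a^1,a^2,a^3\})$. (Two points with opposite normals would force $D(K)=2$.) Put $S:=\conv\{p^1,p^2,p^3\}$. Then $S\optc\B$, $S\subset K$, $D(S)\le D(K)$ and $r(S)\le r(K)$. By \Cref{lem:connectsimplices}, applied to $S$ and the diameter $D(K)\in[D(S),2)$, there is a triangle $S'$ with $D(S')=D(K)$, $R(S')=1$ and $r(S')\le r(S)$. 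Left-mostness of $K$ then forces $r(S)=r(K)$. It remains to show $K=S$, and this is the main obstacle, because inradius is not strictly monotone under inclusion.

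For $K=S$ I would argue by contradiction. Suppose $K\supsetneq S$, so there is a point $x\in K\setminus S$, lying beyond some edge, say $[p^2,p^3]$. The triangle $T:=\conv\{x,p^2,p^3\}$ or $\conv(S\cup\{x\})$ has inradius $\ge r(S)$. The idea is to perturb the vertex $p^1$ of $S$ along $\bd(\B)$ inside the open set $Q$ from the proof of \Cref{lem:connectsimplices}. Such a perturbation keeps optimal containment and keeps the diameter at most $D(K)$ for small moves, and the aim is to strictly decrease $r$. Quasiconcavity (\Cref{cor:bdarea}) says the minimum of $r$ over a boundary arc is attained at an endpoint. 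So unless $r$ is locally constant along that arc, some nearby triangle $S''$ has $r(S'')<r(S)=r(K)$ and $D(S'')\le D(K)$. Using \Cref{lem:connectsimplices} to raise the diameter back to $D(K)$ then contradicts left-mostness. In the locally constant case I would instead use the extra point $x$: the largest inscribed disc $c+r\B$ of $S$ touches all three edges, and adding $x$ beyond an edge lets $c+r\B$ be translated off that edge. This should show that the optimal configuration of $K$ is not rigid and that $r(K)$ could be matched by a triangle of strictly smaller inradius. I expect this non-strictness issue (flat pieces of $\bd(\B)$) to need the most care.

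\emph{Part ii).} By i) and \Cref{thm:left-mostpoint-ndim} it suffices to start from a triangle $S=\conv\{p^1,p^2,p^3\}\optc\B$ attaining $(r_D,D/2)$ with $D<2$, say with $D=\norm{p^1-p^2}$. If at most one edge is shorter than $D$, $S$ is already isosceles. Otherwise two edges, say $[p^1,p^3]$ and $[p^2,p^3]$, are shorter than $D$. Move $p^3$ along $\bd(\B)$ within $Q$ using the path construction of \Cref{lem:connectsimplices}, keeping the diametrical edge $[p^1,p^2]$ fixed and staying in the set of points where the inradius is at most $r(S)$. By continuity the diameter stays $D$ until one of $\norm{p^3-p^1}$, $\norm{p^3-p^2}$ reaches $D$. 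This must happen before the path reaches $Q_{\bd}$, because at $Q_{\bd}$ the triangle degenerates to a segment realizing circumradius $1$, whose diameter is $2>D$. At that moment we have an isosceles triangle with diameter $D$, $R=1$ and inradius at most $r_D$, hence exactly $r_D$.
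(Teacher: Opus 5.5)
There is a genuine gap in part i), at exactly the step you flag, $K=S$.

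Your first branch cannot succeed. Once $r(S)=r(K)$ is known, $S$ is itself mapped to a left-most point, because $r(S)\ge r_{D(S)}\ge r_{D(K)}=r(K)=r(S)$ by \Cref{thm:left-mostpoint-ndim} ii). Hence every triangle $S''\optc\B$ with $D(S'')\le D(K)$ satisfies $r(S'')\ge r(S)$, whatever perturbation produced it. So this branch is vacuous, and since it never uses the point $x$, it could not exclude $K\supsetneq S$ anyway (left-most triangles do exist). The inference is also faulty on its own terms. A quasiconcave function along an arc may be constant on one side of $p^1$ and increasing on the other. Moreover, moving an endpoint of a diametral edge may push the diameter above $D(K)$. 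For the same reason, the target of your second branch, ``a triangle of strictly smaller inradius'', is the wrong one. What must be shown is strict monotonicity in this configuration: $K\supsetneq S$ forces $r(K)>r(S)$.

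Translating the inball off an edge proves this in only one case, the paper's Case 1. If an edge line $H_i$ of $S$ does not support $K$ and neither endpoint of that edge lies in the inball of $S$, then all contact points of the inball with $\bd(K)$ have outer normals in the pointed cone spanned by the other two edge normals. By \Cref{prop:opt} the inball is then not optimally contained in $K$. If instead an endpoint, say $p^1$, lies in the inball, the inball is wedged into the corner at $p^1$ and can stay optimally contained in $K$. A different argument is needed there, and your proposal has none.

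The paper's argument for that case runs as follows. Normalize the inball of $S$ to $\B$; by central symmetry at most one vertex lies in it. So, outside Case 1, the line $H_1$ through $p^2,p^3$ supports $K$ while $H_2$ through $p^1,p^3$ does not. The circumball normal $a^1$ at $p^1$ lies in $\pos\set{b^2,b^3}\subset N_{\B}(p^1)$.
\begin{itemize}
\item If $a^1$ were interior to this cone, $p^1$ would be the unique $a^1$-maximizer on $\B$. This forces $N_{c+R(K)\B}(p^1)=N_{\B}(p^1)\ni b^2$, so $H_2$ would support $K$.
\item If $a^1$ were a positive multiple of $b^2$, $H_2$ would again support $K$.
\item Hence $a^1$ is a positive multiple of $b^3$. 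Then $a^1,a^2$ are both circumball normals at $p^2$, and $0\in\conv\set{a^1,a^2,a^3}$ makes $-a^3$ one as well. This gives $D(K)\ge b_{a^3}(K)=2R(K)$, a contradiction.
\end{itemize}
The decisive input is the contact of $p^1$ with the circumball together with the normal condition of \Cref{prop:opt}, not any perturbation of $S$.

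Your part ii) is correct and takes a different route from the paper. You follow the path in $Q^{\le}$ from the proof of \Cref{lem:connectsimplices}, with $p^3$ as the moving vertex. You stop when $\max\set{\norm{\gamma(t)-p^1},\norm{\gamma(t)-p^2}}$ first equals $D$, so the resulting isosceles triangle comes with $r\le r(S)$ automatically. This stopping time exists because at $Q_{\bd}$ a vertex can be dropped while keeping optimal containment, so the maximum is $2$ there ("degenerates to a segment" is not quite accurate). The paper instead moves $p^3$ in both directions along $\bd(\B)\cap\pos\set{-p^1,-p^2}$ and gets two isosceles triangles from the intermediate value theorem. It then uses \Cref{cor:bdarea} to select one with inradius at most $r(S)$. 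That avoids the topological path argument of the lemma.
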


\begin{proof}
    \begin{enumerate}[i)]
        \item Let $K\in\CK^2$ be a body that is mapped to a left-most point. If $D(K)=2R(K)$, then $r(K)=0$ and therefore it has to be 
        a segment. 
        
        Now, assume $D(K)<2R(K)$. Then, by \Cref{prop:opt}, there exists a proper triangle $S \subset K$ 
        with vertices $p^1,p^2,p^3 \in\bd(K)\cap \bd(c+R(K)\B)$, 
        and corresponding outer normals $a^1,a^2,a^3$ with $0\in\inte\left(\conv\left(\set{a^1,a^2,a^3}\right)\right)$
    such that $R(S) = R(K)$,
        $D(S)\leq D(K)$, and $r(S)\leq r(K)$
        . We show that  
        assuming $K \supsetneq S$ then $r(K)>r(S)$, which would contradict $K$ being a left-most point by \Cref{thm:left-mostpoint-ndim} $ii)$.     
      
For $i=1,2,3$ let $H_i$ be the line which contains the edge of $S$ opposite of $p^i$ and $b^i$ the outer normal of that edge.
If all three of these lines, $H_1,H_2$, and $H_3$, support $K$ as well, we obviously have $K=S$. Moreover, since $K$ is convex, $H_i$, $i=1,2,3$, either supports $K$ or intersects $\bd(K)$ in exactly two 
the vertices 
of $S$ different from $p^i$.%

  We may assume w.l.o.g.~that $\B$ is the inball of $S$, \ie
\begin{equation*}
    \B\optc S\optc K \optc c+R(K)\B 
\end{equation*}
for some circumcenter $c \in \R^n$.
If two vertices, $p^i$ and $p^j$, $1\leq i<j\leq 3$, of $S$ would be contained in the inball $\B$, it follows $[-p^i,-p^j] \subset \B\subset S$ by the symmetry of $\B$. However, this is not possible for an edge of a triangle. Thus, at most one vertex of $S$ is contained in $\B$. 

Now we distinguish two cases of how the endpoints of an edge of $S$ that is not contained in the boundary of $K$ behave. 

Case 1: If 
a line $H_i$, not supporting $K$, is such that no endpoint of the corresponding edge of $S$ is contained in $\B$, we may move $\B$ slightly in the direction of the outer normal of that line, bringing it into
the interior of $K$. 
However, this then proves $r(S)<r(K)$. 

Case 2: If there is no such line, we may assume $p^1\in \B$ and that $H_2$ does not support $K$. As we have argued that no two vertices of $S$ can belong to $\B$, $H_1$ needs to support $K$, otherwise $[p^2,p^3]$ is an edge not supporting $K$ with both endpoints not in $\B$ (\cf~\Cref{fig:triangleleftmost}). 

Moreover, both, $H_2$ and $H_3$ support $\B$ in  $p^1$. Now, consider the common outer normal $a^1$ of $S$, $\B$, and $c+R(K)\B$ in $p^1$.
We see that $a^1\in \pos( \set{b^2,b^3})$ and all directions in $ \pos( \set{b^2,b^3})$ are outer normals of $\B$ in $p^1$, too. Since the outer normals of $\B$ at $p$ are identical to the ones of $c+R(K)\B$ at $c+R(K)p$ we obtain that if $a^1\in \inte(\pos( \set{b^2,b^3}))$, then $p^1$ is the unique optimizer in 
such a direction. Thus, in that case all directions in $\pos( \set{b^2,b^3})$ are also outer normals of $c+R(K)\B$ in $p^1$. However, we would obtain that all directions in $\pos( \set{b^2,b^3})$ are also outer normals of $K$, contradicting that $H_2$ does not support $K$.
Assuming $a^1=b^2$ (up to scaling) would also contradict $H_2$ not supporting $K$. Thus, the only possibility that remains is $a^1=b^3$ (up to scaling). Now, $a^1$ and $a^2$ are both outer normals of $K$ and $c+R(K)\B$ at $p^2$ and since $0\in\conv\left(\set{a^1,a^2,a^3}\right)$, 
the same must hold for $-a^3$. However, this then implies that $D(K)=b_{a^3}(K)=2R(K)$, which contradicts our assumption at the begin of the proof. 
\begin{figure}
    \centering
  \begin{tikzpicture}[scale=1.5]

\tkzDefPoint(0,0){O}

\tkzDefPoint(0,1){p1}
\tkzDefPoint(-2,-1){p2}
\tkzDefPoint(2,-1){p3}
\tkzDrawPolygon[black,thick](p1,p2,p3)

\tkzDefPoint(0.5,0.2){v1}
\tkzDefPoint(-0.5,0.2){v2}
\tkzDefPoint(-0.5,-0.2){v3}
\tkzDefPoint(0.5,-0.2){v4}
\tkzDefPoint(0,-1){mp1}
\tkzDrawPolygon[black](mp1,v3,v2,p1,v1,v4)

\tkzDefPoint(0.5,0.8){k1}
\tkzDefPoint(1.8,-0.5){k2}
\tkzDrawPolygon[dgreen, thick](p1,k1,k2,p3,p2)

\tkzDrawPoints[fill=black](p1,p2,p3,O,k1,k2)

\tkzDefMidPoint(p3,p2)\tkzGetPoint{m1} 
\tkzDefMidPoint(p1,p3)\tkzGetPoint{m2} 
\tkzDefMidPoint(p2,p1)\tkzGetPoint{m3} 

\tkzDefLine[perpendicular=through m1](p3,p2)\tkzGetPoint{n1}
\tkzDefLine[perpendicular=through m2](p1,p3)\tkzGetPoint{n2}
\tkzDefLine[perpendicular=through m3](p2,p1)\tkzGetPoint{n3}

\draw[->,thick] (m1) -- ($(m1)!0.15!(n1)$) node[right] {$b^{1}$};
\draw[->,thick] (m2) -- ($(m2)!0.15!(n2)$) node[above right] {$b^{2}$};
\draw[->,thick] (m3) -- ($(m3)!0.15!(n3)$) node[above left] {$b^{3}$};

\tkzDefPoint(0,1.5){a1}
\tkzDefPoint(-2.3,-1.2){a2}
\tkzDefPoint(2.3,-1.4){a3}

\draw[->,thick] (p1) -- ($(p1)!0.9!(a1)$) node[right] {$a^{1}$};
\draw[->,thick] (p2) -- ($(p2)!0.9!(a2)$) node[left] {$a^{2}$};
\draw[->,thick] (p3) -- ($(p3)!0.9!(a3)$) node[right] {$a^{3}$};
\tkzLabelPoint[above left](p1){$p^1$}
\tkzLabelPoint[above](p2){$p^2$}
\tkzLabelPoint[right](p3){$p^3$}

\tkzDefLine[perpendicular=through p1](p1,a1)
\tkzGetPoint{h1}
\tkzDrawLine[add = 2 and 1, dashed](p1,h1)

\tkzDefLine[perpendicular=through p2](p2,a2)
\tkzGetPoint{h2}
\tkzDrawLine[add = 2 and 1, dashed](p2,h2)

\tkzDefLine[perpendicular=through p3](p3,a3)
\tkzGetPoint{h3}
\tkzDrawLine[add = 2 and 1, dashed](p3,h3)

\tkzLabelPoint[dgreen, above right](k2){$K$}

\end{tikzpicture}
    \caption{Proof of \Cref{thm:isiplanar} $i)$, case 2. The inball touches one vertex and its opposite edge of the triangle of touching points.}
    \label{fig:triangleleftmost}
\end{figure}

\item  Let $S\optc\B$ be mapped to a left-most point. If $D(S)=2$, $S$ is a segment, so we may assume $D(S)<2$ and, by i), that it is a triangle with vertices $p^1,p^2,p^3$. Additionally, we assume that the triangle is not isosceles and w.l.o.g.~that the diameter is attained between $p^1$ and $p^2$
, \ie~$\max\set{\norm{p^1-p^3},\norm{p^2-p^3}}<\norm{p^1-p^2}$. 
From \Cref{prop:opt}, we know that $a^3 \in  \pos(\set{-a^1,-a^2})$ and since 
we are in the planar case, this implies $p^3\in \pos(\set{-p^1,-p^2})$, too.  
Additionally, if $p\in \bd(\B)\cap \pos(\set{-p^1,-p^2})$, then $\conv(\set{p^1,p^2,p})\optc\B$, since there exists an outer normal of $p$ in $\pos(\set{-a^1,-a^2})$. Now, we consider the function which maps points $p \in \bd(\B)\cap \pos(\set{-p^1,-p^2})$ to $\max \set{D([p^1,p]),D([p^2,p])}$ which is continuous as $p$ moves along the boundary. For $p=p^3$, we attain a value strictly smaller than $D(S)$ since we assumed $S$ not to be isosceles, and if $p\in\set{-p^1,-p^2}$, it maps to 2, which is strictly larger than $D(S)$. By the intermediate value theorem there exist $p_{*}^1$ and $p_{*}^2$ with $p_{*}^i\in \bd(\B)\cap \pos(\set{-p^i,p^3})$ such that $\conv(\set{p^1,p^2,p_{*}^i}))$, $i=1,2$, are isosceles triangles with the same diameter and circumradius as $S$. By \Cref{cor:bdarea}, we know that at least one of them has at most the inradius of $S$ and is therefore mapped to the same left-most point as $S$.



    \end{enumerate}
\end{proof}

\section{A complete system for the 3-dimensional cross-polytope}
\label{sec:c3diagram}
\Cref{thm:rbound} and \Cref{prop:jungboundcollaps} show that the diagram of the 1-norm in 3-space (\ie~the unit ball is the 3-dimensional unit cross-polytope $\B_1$) is unique. 
It is (up to linear transformation) the unique non-planar Minkowski space where the $LDSI$ collapses, it 
belongs to the special spaces, where the Jung inequality collapses, and it has a linear $ISI$, which is exceptional, too. Altogether, the diagram becomes a triangle, i.e.~it is also the only polytopal diagram with non-collapsing $ISI$ that we are aware of. In this section, we prove the claimed form of the $ISI$ for this setting. 
\FloatBarrier
\begin{lemma}
\label{lem:areavertices}
    Let $S:=\conv\left(\set{p^1,p^2,p^3,p^4}\right)\optc \B_1$ be a simplex with $D(S)<2$. Then, 
\begin{equation*}
    1-\frac{D(S)}{2}\leq |p^i_j| \leq D(S)-1,
\end{equation*}
    $i\in\set{1,2,3,4}$ and $j\in\set{1,2,3}$.
\end{lemma}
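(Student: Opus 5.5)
The plan is to pin down the combinatorial structure of the optimal containment $S\optc\B_1$ via \Cref{prop:opt}, and then to read the bounds directly off the $1$-norm distances between the vertices.

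\emph{Step 1: all four vertices are touching points.} By \Cref{prop:opt} there are touching points $t^1,\dots,t^k\in\bd(S)\cap\bd(\B_1)$ with outer normals $a^i\in\ext(\B_1\pol)=\set{\pm1}^3$, affinely independent and with $0\in\relint(\conv\set{a^1,\dots,a^k})$. If $k\le 3$, or more generally if the touching points span a subdimensional set $T$, then $R(T)=1$. By \Cref{thm:rbound} ii) this gives $D(T)=2$, hence $D(S)=2$, a contradiction. So $k=4$ and $T=\conv\set{t^1,\dots,t^4}$ is a tetrahedron. I then want $T=S$, i.e.\ that the touching points are exactly the vertices $p^i$. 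This is the step I expect to be the main obstacle. I would first try to show that a touching point lying in the relative interior of an edge or facet of $S$ forces the whole face into the same facet of $\B_1$, so that a vertex of $S$ can replace it. Failing that, I would read the lemma as concerning the touching points, which is the way it is used later.

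\emph{Step 2: normalise the normals.} No two $a^i$ are antipodal, since otherwise $D(S)=b_{a^i}(S)=2$. Four vertices of the cube with $0$ in the interior of their convex hull and no antipodal pair must form one of the two inscribed regular tetrahedra; I would verify this by a short case check. By the symmetries of $\B_1$ I would then take
\[
a^1=(1,1,1),\quad a^2=(1,-1,-1),\quad a^3=(-1,1,-1),\quad a^4=(-1,-1,1).
\]
Consequently each $p^i$ has coordinates whose signs (weakly) agree with those of $a^i$, and $\sum_j|p^i_j|=1$. The key feature is that any two of the $a^i$ agree in exactly one coordinate and disagree in the other two.

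\emph{Step 3: the distance computation.} Take, for instance, $p^1,p^2$, which share the sign in coordinate $1$. Then
\[
\norm{p^1-p^2}_1=\bigl||p^1_1|-|p^2_1|\bigr|+(1-|p^1_1|)+(1-|p^2_1|)=2-2\min\set{|p^1_1|,|p^2_1|}.
\]
Hence $D(S)\ge 2-2\min\set{|p^1_1|,|p^2_1|}$, and so both $|p^1_1|$ and $|p^2_1|$ are at least $1-D(S)/2$. For every vertex $i$ and coordinate $j$ there is exactly one other vertex sharing the sign of coordinate $j$. The same computation therefore gives $|p^i_j|\ge 1-D(S)/2$ for all $i,j$.

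\emph{Step 4: the upper bound.} Finally, using $\sum_j|p^i_j|=1$ and the lower bound on the two remaining coordinates,
\[
|p^i_j|=1-\sum_{l\ne j}|p^i_l|\le 1-2\Bigl(1-\tfrac{D(S)}2\Bigr)=D(S)-1.
\]
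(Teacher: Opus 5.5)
Your proof is correct and takes the same route as the paper. Both arguments place the four vertices on four pairwise non-opposing facets of $\B_1$ and then read the coordinate bounds off the diameter. The paper uses the breadths $b_a(S)$ for the facet normals $a\in\{\pm1\}^3$, while you use the pairwise $\ell_1$-distances; these give the same inequalities, and in both the upper bound comes from $\sum_j|p^i_j|=1$.

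Your Step~1 is more careful than the paper, which simply asserts this facet structure, and the face-replacement idea you sketch does close it, so no fallback reading of the lemma is needed. A touching point in the relative interior of a face of $S$ forces that whole face into the corresponding facet of $\B_1$, so it can be replaced by a vertex of that face. Two of the resulting points cannot coincide: the touching set would then be subdimensional and optimally contained in $\B_1$, and \Cref{thm:rbound} would force $D(S)=2$.
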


\begin{figure}
    \centering
\begin{tikzpicture}[scale=2.5]
\def\pA{(1,0,0)}
\def\pB{(-1,0,0)}
\def\pC{(0,1,0)}
\def\pD{(0,-1,0)}
\def\pE{(0,0,1)}
\def\pF{(0,0,-1)}
\def\pGa{(0.2,0.2,0.6)}
\def\pGb{(0.2,0.6,0.2)}
\def\pGc{(0.6,0.2,0.2)}
\draw \pGa -- \pGb -- \pGc --cycle;
\def\pHa{(-0.2,-0.2,0.6)}
\def\pHb{(-0.2,-0.6,0.2)}
\def\pHc{(-0.6,-0.2,0.2)}
\draw \pHa -- \pHb -- \pHc --cycle;
\def\pIa{(-0.2,0.2,-0.6)}
\def\pIb{(-0.2,0.6,-0.2)}
\def\pIc{(-0.6,0.2,-0.2)}
\draw \pIa -- \pIb -- \pIc --cycle;
\def\pJa{(0.2,-0.2,-0.6)}
\def\pJb{(0.2,-0.6,-0.2)}
\def\pJc{(0.6,-0.2,-0.2)}
\draw \pJa -- \pJb -- \pJc --cycle;

\def\k{0.6}
\def\km{0.2}
    \draw[thick] \pA -- \pC;
    \draw[thick] \pA -- \pD;
    \draw[thick] \pA -- \pE;
    \draw[thick,dashed] \pA -- \pF;

    \draw[thick] \pB -- \pC;
    \draw[thick] \pB -- \pD;
    \draw[thick] \pB -- \pE;
    \draw[thick,dashed] \pB -- \pF;

    \draw[thick] \pC -- \pE;
    \draw[thick,dashed] \pC -- \pF;

    \draw[thick] \pD -- \pE;
    \draw[thick,dashed] \pD -- \pF;

    \foreach \p in {\pA, \pB, \pC, \pD, \pE, \pF} {
        \filldraw \p circle (1pt);
    }

    \node at \pA [anchor=west] {};
    \node at \pB [anchor=east] {};
    \node at \pC [anchor=south] {};
    \node at \pD [anchor=north] {};
    \node at \pE [anchor=west] {};
    \node at \pF [anchor=east] {};
    \fill[orange, opacity=0.2] \pB -- \pC -- \pF -- cycle; 
      \fill[orange, opacity=0.2] \pA -- \pF -- \pD -- cycle;
      \fill[orange, opacity=0.3] \pB -- \pD -- \pE -- cycle;
      \fill[orange, opacity=0.3] \pA -- \pE -- \pC -- cycle;

\end{tikzpicture}
    \caption{If four points lie on non-opposing facets of the cross-polytope $\B_1$ with a given diameter, they have to be situated in an arrangement like on the orange-colored facets and there inside the small triangles.  }
    \label{fig:crosspoyltope2}
\end{figure}

\begin{proof}
   Since $D(S)<2$, all points in $\set{p^1,p^2,p^3,p^4}$ belong to $\bd(\B_1)$, but no 
  two of them can belong to the same nor to opposing facets of $\B_1$. 
   Thus, we may assume
    \begin{align*}
    \label{eq:crosspolyvertices}
        p^1 \in \conv\left(\set{u^1,u^2,u^3}\right), \qquad 
        & p^2 \in \conv\left(\set{-u^1,-u^2,u^3}\right),\\
        p^3 \in \conv\left(\set{-u^1,u^2,-u^3}\right), 
        \qquad 
        & p^4 \in \conv\left(\set{u^1,-u^2,-u^3}\right)
    \end{align*}
(\cf~\Cref{fig:crosspoyltope2}). By \Cref{prop:diamproperties}, $D(S)=\max_{a\in \ext\B_1\pol)}b_a(S)$). Hence, we obtain
\begin{equation}
    \begin{split}
          D(S)&\geq b_{(\phantom{-}1,\phantom{-}1,\phantom{-}1)^\top}(S)
          \geq 1-p^i_1-p^i_2-p^i_3, \quad 
          i\in \set{2,3,4},\\
    D(S)&\geq b_{(-1,-1,\phantom{-}1)^\top}(S) 
    \geq 1+p^i_1+p^i_2-p^i_3, \quad 
    i\in \set{1,3,4},\\
    D(S)&\geq b_{(-1,\phantom{-}1,-1)^\top}(S)
    \geq 1+p^i_1-p^i_2+p^i_3, \quad 
    i\in \set{1,2,4},
    \text{ and }\\
    D(S)&\geq b_{(\phantom{-}1,-1,-1)^\top}(S)
    \geq 1-p^i_1+p^i_2+p^i_3, \quad 
    i\in \set{1,2,3}.
    \end{split}
    \label{eq:crosspolyineqs1}
\end{equation}
Here, the second inequality in the first row uses the fact that 
\begin{equation*}
    b_{(1,1,1)^\top}(S)\geq (1,1,1)^\top p^1 + (-1,-1,-1)^\top p^i=1-p^i_1-p^i_2-p^i_3, \quad i\in \set{2,3,4}
\end{equation*}
and the second inequalities in the other rows follow analogously. 
Now, $p^i\in\bd(\B_1)$ implies $\sum_{k=1}^3|p^i_k|=1$, $i\in\set{1,2,3,4}$. Thus, \eg~for $i=1$, we obtain $p^1_2+p^1_3=1-p^1_1$ and therefore, by the fourth equation in \eqref{eq:crosspolyineqs1} that $D(S)\geq 2-2p^1_1$, which proves $p^1_1\geq 1-\frac{D(S)}{2}$. All other cases again follow analogously.  

Furthermore, by adding the first two inequalities in \eqref{eq:crosspolyineqs1}, we obtain $2D(S)\geq 2-2p^i_3$ or $D(S)-1\geq -p^i_3=|p^i_3|$ for $i\in\set{3,4}$ and again all other cases follow analogously. 



\end{proof}

 \begin{theorem}
 \label{thm:c3newineq}
     For $(\R^3,\norm{\cdot}_1)$ and $K\in\CK^3$, we have
    \begin{equation*}
      R(K)-r(K)\leq \frac{D(K)}{2}
    \end{equation*}
 \end{theorem}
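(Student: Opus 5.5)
The plan is to normalize, reduce to a simplex inscribed in $\B_1$ with vertices on four non-opposing facets, and then use the coordinate bounds of \Cref{lem:areavertices} to exhibit an explicit inscribed ball.

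\textbf{Normalization.} By homogeneity and translation invariance, assume $K\optc\B_1$, so $R(K)=1$. The claim becomes $r(K)\geq 1-\tfrac{D(K)}{2}$. If $D(K)=2$ there is nothing to show, so assume $D(K)<2$.

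\textbf{Reduction to a simplex.} By \Cref{prop:opt} there are touching points $p^1,\dots,p^k\in K\cap\bd(\B_1)$, $k\in\set{2,3,4}$. The proof of \Cref{thm:rbound}, part ii), shows that if $k\le 3$, or if two touching points lie on opposite facets, then $D(K)=2$. Hence $k=4$, and after a symmetry of $\B_1$ the touching points lie in the four non-opposing facets listed in \Cref{lem:areavertices}. Let $S:=\conv(\set{p^1,\dots,p^4})$. Then $S\optc\B_1$ and $S\subset K$, so $D(S)\le D(K)$ and $r(S)\le r(K)$. Since $1-\tfrac{D(S)}{2}\geq 1-\tfrac{D(K)}{2}$, it suffices to prove
\[
r(S)\geq t:=1-\tfrac{D(S)}{2},
\]
and for this it is enough to show $t\B_1\subset S$.

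\textbf{Sign pattern.} By \Cref{lem:areavertices}, every coordinate satisfies $|p^i_j|\geq t$. Moreover, the sign vectors of $p^1,\dots,p^4$ are
\[
\sigma^1=(1,1,1),\quad \sigma^2=(-1,-1,1),\quad \sigma^3=(-1,1,-1),\quad \sigma^4=(1,-1,-1),
\]
which are the vertices of a regular tetrahedron inscribed in the cube $[-1,1]^3$. The tetrahedron $T:=t\conv(\set{\sigma^1,\dots,\sigma^4})$ is described by $-\sigma^i\cdot x\le t$, $i=1,\dots,4$, and it contains $t\B_1$, since $|\sigma^i\cdot(\pm t u^j)|=t$. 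The goal is therefore to show that the six points $\pm t u^j$, or the whole of $T$, lie in $S$.

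\textbf{Main step and expected obstacle.} Proving $\pm t u^j\in S$ is the crux. Two routes seem available.
\begin{itemize}
\item[(a)] Facet inequalities of $S$. The facet of $S$ opposite $p^i$ passes through three points whose sign vectors are the other three $\sigma^l$. I would show that its outer normal $a$ (normalized by $h_S(a)$) satisfies $t\max_j|a_j|\le h_S(a)$, which is exactly $t\B_1\subset\set{x: a^\top x\le h_S(a)}$.
\item[(b)] Explicit convex combinations. Write $t u^1$ using $p^1,p^4$ (positive first coordinate, at least $t$) and the opposite-sign pairs in coordinates $2$ and $3$. The upper bounds $|p^i_j|\le D(S)-1$ from \Cref{lem:areavertices}, together with $\sum_j|p^i_j|=1$, should control the remaining coordinates so that the combination can be balanced.
\end{itemize}
I expect this step to be the main obstacle, and I would try route (b) first for $t u^1$, with the other five points following by the symmetries of the configuration. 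If that fails, I would fall back on (a) via a linear-programming or duality argument over the admissible region given by \Cref{lem:areavertices}, where extremal configurations are presumably the symmetric ones with all $|p^i_j|\in\set{t,\,D(S)-1}$.
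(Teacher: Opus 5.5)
You reduce correctly to a simplex $S\optc\B_1$ with one vertex in each of the four non-opposing facets, as the paper does. The gap is in the step you call the crux: it is not merely left open, its target is false. The ball of radius $t=1-\tfrac{D(S)}{2}$ inside $S$ cannot in general be centred at the origin. Take Configuration 2 of \Cref{table:configs} with $D=\tfrac85$:
\[
p^1=\bigl(\tfrac15,\tfrac15,\tfrac35\bigr)^{\top},\quad
p^2=\bigl(-\tfrac15,-\tfrac15,\tfrac35\bigr)^{\top},\quad
p^3=\bigl(-\tfrac35,\tfrac15,-\tfrac15\bigr)^{\top},\quad
p^4=\bigl(\tfrac15,-\tfrac15,-\tfrac35\bigr)^{\top}.
\]
Each $p^i$ lies in the relative interior of its facet, and the four facet normals sum to $0$, so $S\optc\B_1$. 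Five of the six $\ell_1$-distances equal $\tfrac85$ and $\norm{p^1-p^2}_1=\tfrac45$, so $D(S)=\tfrac85$ and $t=\tfrac15$. However, $\max_{x\in S}x_2=\tfrac15$ is attained only on the edge $[p^1,p^3]$, whose unique point with $x_1=0$ is $(0,\tfrac15,\tfrac25)^{\top}$. Hence $tu^2=(0,\tfrac15,0)^{\top}\notin S$ and $t\B_1\not\subset S$.

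So neither route (a) nor route (b) can succeed. Your LP/duality fallback fails for the same reason, since it still aims at the origin-centred ball. In this example $r(S)=t$ does hold, but only with a shifted centre such as $c=(-\tfrac1{10},0,\tfrac3{10})^{\top}$. Configuration 4 has the same edge $[p^1,p^3]$ at $x_2=t$ and fails in the same way.

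The missing idea is that the centre of the inscribed ball must depend on $S$. The paper avoids constructing such a centre for a general $S$. It fixes $D$ as the diameter of the touching simplex and notes that any choice of vertices in the four small triangles of \Cref{lem:areavertices} gives a simplex optimally contained in $\B_1$ with diameter at most $D$. It then applies the quasiconcavity of the inradius in a single vertex (\Cref{prop:quasiconc}), vertex by vertex, to reduce to simplices whose vertices are corners of those triangles. Your guess that the extremal configurations have $|p^i_j|\in\set{t,D-1}$ is right. However, the justification is this quasiconcavity, not linear programming, since the facet normals of $S$ depend nonlinearly on the vertices. Moreover, what must be checked at those extremal configurations is $r(S)\ge 1-\tfrac D2$, not $t\B_1\subset S$.

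Up to symmetry six configurations remain, and their inradii are computed explicitly with configuration-dependent incentres. The minimum $1-\tfrac D2$ is attained in Configurations 1, 2, 4 and 6. To repair your argument you need either this reduction together with the case computation, or an explicit centre $c(S)$ depending on the vertices for which you can verify $c(S)+t\B_1\subset S$ for every admissible $S$.
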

 \begin{proof}
 If $D(K)=2R(K)$, the inequality is obviously fulfilled. Hence, we may assume $K\optc \B_1$ and $D(K) < 2$. 
First, this implies, due to \Cref{prop:opt} and Theorem \Cref{thm:rbound}, that
there must be four touching points $\tilde p^1,\tilde p^2,\tilde p^3, \tilde p^4 \in K \cap \bd(\B_1)$ and the simplex $\tilde S:=\conv\left(\set{\tilde p^1,\tilde p^2,\tilde p^3, \tilde p^4}\right)$ is optimally contained in $\B_1$ again, with $r(\tilde S)\leq r(K)$ and $D:=D(\tilde S)\leq D(K)$. Since $D<2$, we may assume (as in \Cref{lem:areavertices}) 
\begin{equation}
 \label{eq:pscoordinates1}
    \begin{split}
         \tilde p^1 \in \conv\left(\set{u^1,u^2,u^3}\right), \qquad 
        &  \tilde p^2 \in \conv\left(\set{-u^1,-u^2,u^3}\right),\\
         \tilde p^3 \in \conv\left(\set{-u^1,u^2,-u^3}\right), 
        \qquad 
        &  \tilde p^4 \in \conv\left(\set{u^1,-u^2,-u^3}\right).
    \end{split}
\end{equation}


By \Cref{lem:areavertices} we have 
\begin{equation}
\label{eq:pscoordinates2}
    1-\frac{D}{2}\leq |\tilde p^i_j| \leq D-1
\end{equation}
for all  $i\in\set{1,2,3,4}$ and $j\in\set{1,2,3}$ and by \Cref{prop:diamproperties}, the diameter is attained at one of the breadths $b_a$ with $a$ being an outer normal of a facet of $\B_1$. Hence, for any choice of $p^1$, $p^2$, $p^3$, and $p^4$ fulfilling the conditions in  \eqref{eq:pscoordinates1} and \eqref{eq:pscoordinates2}, the diameter of $S:=\conv(\set{p^1,p^2,p^3,p^4})$ is at most $D$ and $S\optc \B_1$. 

By \Cref{prop:quasiconc}, the inradius of $\tilde S$ is at least that of such a simplex $S$ with all vertices 
also being one of the vertices of the triangular areas defined by \eqref{eq:pscoordinates2}. Thus, it suffices to consider these configurations and, using the symmetries of $\B_1$, it turns out that there are only six of them, which can be found in \Cref{table:configs}. There, the projections onto the $x_1,x_2$--plane of the triangular areas on the facets of $\B_1$ are depicted. The proof for the inradii of the six configurations can be found in \nameref{appendixA}. The vertices of the small triangular areas always have two coordinates of absolute value $1-\frac{D}{2}$ and one with absolute value $D-1$. 
We may assume $p^1=(1-\frac{D}{2}, 1-\frac{D}{2}, D-1)^{\top}$. 

First, we consider the case with two vertices whose coordinates differ only in the signs of the two components having absolute values $1-\frac{D}{2}$. Let this happen between $p^1$ and $p^2$. Then, $p^2=( -(1-\frac{D}{2}), -(1-\frac{D}{2}), D-1)^{\top}$. 
Under these assumptions, we have four choices for $p^3$ and $p^4$ leading to different configurations, as shown in the first four rows of \Cref{table:configs}. In three of these cases, the inradius is $1-\frac{D}{2}$ and in the remaining case it is $\frac{D(2-D)}{4-D}$, which is 
lower bounded by $1-\frac{D}{2}$ for all $D \in[\frac{4}{3},2]$. 

Now, assume that there are no two vertices whose coordinates differ only in the signs of the two components having absolute values $1-\frac{D}{2}$. In this case, we may set  $p^2=(-(1-\frac{D}{2}), -(D-1), 1-\frac{D}{2})^{\top}$. 
Moreover, because of our assumption on the signs of the coordinates, $p^4\neq (1-\frac{D}{2}, -(D-1), -(1-\frac{D}{2}))^{\top}$.

Case 1: If $p^4=(D-1, -( 1-\frac{D}{2}), -(1-\frac{D}{2}))^{\top}$ then, because of rotational symmetry around the axis $\lin(\set{(-1,1,-1)^{\top}})$, 
any choice of $p^3$ leads to the same inradius (\cf~the fifth row of \Cref{table:configs}). Here, the inradius is $\frac{D(7D^2-20D+16)}{2(11D^2-24D+16)}$, which is again 
lower bounded by $1-\frac{D}{2}$ for all $D\in[\frac{4}{3},2]$. 

Case 2: If $p^4=(1-\frac{D}{2}, -( 1-\frac{D}{2}), -(D-1))^{\top}$ then, by our assumptions on the coordinates, $p^3=(-(D-1), 1-\frac{D}{2}, -(1-\frac{D}{2}))^{\top}$ or $p^3=(-(1-\frac{D}{2}), D-1, -(1-\frac{D}{2}))^{\top}$. 

The first choice of $p^3$ corresponds to Case 1 (now, the axis $\lin(\set{(1,1,1)^{\top}})$ would correspond to the the axis $\lin(\set{(-1,1,-1)^{\top}})$ in that case). The second choice corresponds to the configuration, which is depicted in the sixth row of \Cref{table:configs}. This configuration has inradius $1-\frac{D}{2}$. 

Hence, the smallest inradius attained is $1-\frac{D}{2}$. It follows 
\begin{align*}
    r(K) &\geq r(\tilde{S})\geq r(S)\geq R(S)-\frac{D(S)}{2}\\
    &\geq R(\tilde S)-\frac{D(\tilde S)}{2}\geq R(K)-\frac{D(K)}{2}.
\end{align*}
\end{proof}

\begin{proof}[Proof of \Cref{thm:c3diagram}]
By affine invariance we may assume $\B=\B_1$. The first two inequalities are \eqref{eq:upper} and $\eqref{eq:right}$. The third is the one proven in \Cref{thm:c3newineq}. These three inequalities describe the entire boundary of $f(\CK^3)$. First, for any segment $L$, $f(L)=(0,1)$. Second, $f(\B_1)=(1,1)$ for the cross-polytope $\B_1$. Their convex combinations fill the points in between, \cf~\Cref{prop:starshaped}. Third, the regular simplex $T$ with vertices at the centers of non-opposing facets is mapped to $(1/3,2/3)$, and the convex combinations of $T$ and $\B_1$ fill the points between $(1/3,2/3)$ and $(1,1)$. 

Finally, defining $S_D=\conv\left(\set{p^1,p^2,p^3,p^4}\right)$ with 
     \begin{equation*}
            p^1=\begin{pmatrix}
                1-\tfrac{D}{2} \\ 1-\tfrac{D}{2} \\ D-1
            \end{pmatrix} \quad
             p^2=\begin{pmatrix}
               -( 1-\frac{D}{2}) \\ -(1-\frac{D}{2}) \\ D-1
            \end{pmatrix} \quad
             p^3=\begin{pmatrix}
                -(1-\frac{D}{2}) \\ 1-\frac{D}{2} \\ -(D-1)
            \end{pmatrix} \quad
             p^4=\begin{pmatrix}
                1-\frac{D}{2} \\ -(1-\frac{D}{2}) \\ -(D-1)
            \end{pmatrix}
        \end{equation*}
        for $D\in\left[\frac{4}{3},2\right]$ (\cf~Configuration 1 in \Cref{table:configs}) attain equality in \Cref{thm:c3newineq}, ranging from $f(S_{4/3})=(1/3,2/3)$ to $f(S_2)=(0,1)$.
    
    By Proposition \ref{prop:starshaped}, the three given inequalities build a complete system of inequalities for $f(\CK^3)$. 
 \end{proof}

    There are four equality cases in \Cref{table:configs} for the $ISI$ in \Cref{thm:c3newineq}. Note that configuration 1 in  \Cref{table:configs} has two shorter edges and is not isosceles. The other three are isosceles, and configuration 6 is even a diametric simplex. 

\newpage

 \begin{table}[H]
 \begin{center}
\begin{tabular}{ |m{4cm}|m{4cm}|m{7cm}| } 
 \hline
 Configuration & Inradius & Incenter \\ 
 \hline
 
 \centering \begin{tikzpicture}[scale=1.3]

\coordinate (O) at (0,0);

\draw[] (0.3,0.3) -- (1,0.3) -- (0.3,1) -- cycle;

\draw[]  (0.3,-0.3) -- (1,-0.3) -- (0.3,-1)  -- cycle;

\draw[]  (-0.3,0.3) -- (-1,0.3) -- (-0.3,1) -- cycle;

\draw[]  (-0.3,-0.3) -- (-1,-0.3) -- (-0.3,-1) -- cycle;

\fill[orange] (0.3,0.3) circle (2pt) ;
\fill[orange] (-0.3,-0.3) circle (2pt) ;
\fill[orange] (-0.3,0.3) circle (2pt) ;
\fill[orange] (0.3,-0.3) circle (2pt) ;
   \draw[->] (-1.2,0) -- (1.2,0) node[right] {$x_1$};
     \draw[->] (0,-1.2) -- (0,1.2) node[above] {$x_2$};

\end{tikzpicture} & $1-\frac{D}{2}$ & $\begin{pmatrix}
     0 \\ 0\\ 0 
 \end{pmatrix}$  \\ 
 \hline
    \centering \begin{tikzpicture}[scale=1.3]

\coordinate (O) at (0,0);

\draw[] (0.3,0.3) -- (1,0.3) -- (0.3,1) -- cycle;

\draw[]  (0.3,-0.3) -- (1,-0.3) -- (0.3,-1)  -- cycle;

\draw[]  (-0.3,0.3) -- (-1,0.3) -- (-0.3,1) -- cycle;

\draw[]  (-0.3,-0.3) -- (-1,-0.3) -- (-0.3,-1) -- cycle;

\fill[orange] (0.3,0.3) circle (2pt) ;
\fill[orange] (-0.3,-0.3) circle (2pt) ;
\fill[orange] (-1,0.3) circle (2pt) ;
\fill[orange] (0.3,-0.3) circle (2pt) ;
   \draw[->] (-1.2,0) -- (1.2,0) node[right] {$x_1$};
     \draw[->] (0,-1.2) -- (0,1.2) node[above] {$x_2$};

\end{tikzpicture} & $1-\frac{D}{2}$ & $\begin{pmatrix}
     \frac{(3D-4)(D-2)}{2D} \\ 0\\ \frac{(3D-4)(D-1)}{D} 
 \end{pmatrix}$  \\ 
 \hline
 \centering\begin{tikzpicture}[scale=1.3]

\coordinate (O) at (0,0);

\draw[] (0.3,0.3) -- (1,0.3) -- (0.3,1) -- cycle;

\draw[]  (0.3,-0.3) -- (1,-0.3) -- (0.3,-1)  -- cycle;

\draw[]  (-0.3,0.3) -- (-1,0.3) -- (-0.3,1) -- cycle;

\draw[]  (-0.3,-0.3) -- (-1,-0.3) -- (-0.3,-1) -- cycle;

\fill[orange] (0.3,0.3) circle (2pt) ;
\fill[orange] (-0.3,-0.3) circle (2pt) ;
\fill[orange] (-0.3,1) circle (2pt) ;
\fill[orange] (1,-0.3) circle (2pt) ;
   \draw[->] (-1.2,0) -- (1.2,0) node[right] {$x_1$};
     \draw[->] (0,-1.2) -- (0,1.2) node[above] {$x_2$};

\end{tikzpicture}& $\frac{D(2-D)}{4-D}$ & $\begin{pmatrix}
     -\frac{(3D-4)(D-2)}{8-2D} \\ -\frac{(3D-4)(D-2)}{8-2D}\\ \frac{3D-4}{4-D} 
 \end{pmatrix}$  \\ 
 \hline
  \centering
\begin{tikzpicture}[scale=1.3]

\coordinate (O) at (0,0);

\draw[] (0.3,0.3) -- (1,0.3) -- (0.3,1) -- cycle;

\draw[]  (0.3,-0.3) -- (1,-0.3) -- (0.3,-1)  -- cycle;

\draw[]  (-0.3,0.3) -- (-1,0.3) -- (-0.3,1) -- cycle;

\draw[]  (-0.3,-0.3) -- (-1,-0.3) -- (-0.3,-1) -- cycle;

\fill[orange] (0.3,0.3) circle (2pt) ;
\fill[orange] (-0.3,-0.3) circle (2pt) ;
\fill[orange] (-1,0.3) circle (2pt) ;
\fill[orange] (1,-0.3) circle (2pt) ;
   \draw[->] (-1.2,0) -- (1.2,0) node[right] {$x_1$};
     \draw[->] (0,-1.2) -- (0,1.2) node[above] {$x_2$};

\end{tikzpicture} & $1-\frac{D}{2}$ & $\begin{pmatrix}
     0 \\ 0\\ \frac{3D-4}{2} 
 \end{pmatrix}$  \\ 
 \hline
 \centering\begin{tikzpicture}[scale=1.3]

\coordinate (O) at (0,0);

\draw[] (0.3,0.3) -- (1,0.3) -- (0.3,1) -- cycle;

\draw[]  (0.3,-0.3) -- (1,-0.3) -- (0.3,-1)  -- cycle;

\draw[]  (-0.3,0.3) -- (-1,0.3) -- (-0.3,1) -- cycle;

\draw[]  (-0.3,-0.3) -- (-1,-0.3) -- (-0.3,-1) -- cycle;

\fill[orange] (0.3,0.3) circle (2pt) ;
\fill[orange] (-0.3,-1) circle (2pt) ;
\fill[orange] (-0.3,1) circle (2pt) ;
\fill[orange] (1,-0.3) circle (2pt) ;
   \draw[->] (-1.2,0) -- (1.2,0) node[right] {$x_1$};
     \draw[->] (0,-1.2) -- (0,1.2) node[above] {$x_2$};

\end{tikzpicture}  & $\frac{D(7D^2-20D+16)}{2(11D^2-24D+16)}$ & $\frac{3D-4}{11D^2-24D+16}\begin{pmatrix}
     2D^2-5D+4 \\ \frac{-(3D-4)(2-D)}{2}\\ \ 2D^2-5D+4
 \end{pmatrix}$  \\ 
 \hline

 \centering\begin{tikzpicture}[scale=1.3]

\coordinate (O) at (0,0);

\draw[] (0.3,0.3) -- (1,0.3) -- (0.3,1) -- cycle;

\draw[]  (0.3,-0.3) -- (1,-0.3) -- (0.3,-1)  -- cycle;

\draw[]  (-0.3,0.3) -- (-1,0.3) -- (-0.3,1) -- cycle;

\draw[]  (-0.3,-0.3) -- (-1,-0.3) -- (-0.3,-1) -- cycle;

\fill[orange] (0.3,0.3) circle (2pt) ;
\fill[orange] (-0.3,-1) circle (2pt) ;
\fill[orange] (-0.3,1) circle (2pt) ;
\fill[orange] (0.3,-0.3) circle (2pt) ;
   \draw[->] (-1.2,0) -- (1.2,0) node[right] {$x_1$};
     \draw[->] (0,-1.2) -- (0,1.2) node[above] {$x_2$};

\end{tikzpicture} & $1-\frac{D}{2}$ & $\begin{pmatrix}
     0 \\ 0\\ 0 
 \end{pmatrix}$  \\
 \hline
\end{tabular}
\end{center}
\caption{The six configurations of simplices considered in the proof of \Cref{thm:c3diagram}.}
\label{table:configs}
\end{table}

\newpage

\section*{References}
\printbibliography[heading=none]

René Brandenberg -- 
Technical University of Munich, School of Computation, Information and Technology, Department of Mathematics, Germany. \\
\textbf{rene.brandenberg@tum.de}

Bernardo González Merino -- 
University of Murcia, Faculty of Computer Science, Department of Engineering and Technology of Computers, Area of Applied Mathematics, Spain.\\
\textbf{bgmerino@um.es}

Mia Runge -- 
Technical University of Munich, School of Computation, Information and Technology, Department of Mathematics, Germany. \\
\textbf{mia.runge@tum.de}

\newpage
\appendix
\section*{Appendix A}
\label{appendixA}
\begin{proof}[Proof of \Cref{table:configs}]
   To prove for a given configuration $S=\conv\left(\set{p^1,p^2,p^3,p^4}\right)$ from \Cref{table:configs} that the inradius is $r$ and that the incenter is $c$, we need to show $c+r\B_1 \subset S$ and that this containment is optimal. To do so, we have computed the inradii by exploiting the symmetries of $\B_1$ as well as $S$. We thus omit here the details and only leave the resulting values of the inradius and the incenter so that the reader can check their validity. 
   For each configuration, we list the six vertices of $c+r\B_1$ as convex combinations of 
   $p^1,p^2,p^3,p^4$, which shows the containment $c+r\B_1 \subset S$. For the optimal containment, we show that some of those vertices are indeed a convex combination of at most three of the vertices of the simplex and  
   provide 
   common outer normals at some of those vertices at the boundary of $S$ with 0 in their convex hull as described in \Cref{prop:opt}. 

    \textit{Configuration 1:}\\
    Vertices:
        \begin{equation*}
            p^1=\begin{pmatrix}
                1-\frac{D}{2} \\ 1-\frac{D}{2} \\ D-1
            \end{pmatrix}, \quad
             p^2=\begin{pmatrix}
               -( 1-\frac{D}{2}) \\ -(1-\frac{D}{2}) \\ D-1
            \end{pmatrix}, \quad
             p^3=\begin{pmatrix}
                -(1-\frac{D}{2}) \\ 1-\frac{D}{2} \\ -(D-1)
            \end{pmatrix}, \quad
             p^4=\begin{pmatrix}
                1-\frac{D}{2} \\ -(1-\frac{D}{2}) \\ -(D-1)
            \end{pmatrix}.
        \end{equation*}
         Inradius: $r=1-\frac{D}{2}$.\\
        Incenter: $c=(0,0,0)^{\top}$.\\
        Containment: 
        \begin{align*}
            c+ru^1&=\frac{1}{2}(p^1+p^4),\\
            c-ru^1&=\frac{1}{2}(p^2+p^3),\\
            c+ru^2&=\frac{1}{2}(p^1+p^3),\\
            c-ru^2&=\frac{1}{2}(p^2+p^4),\\
            c+ru^3&=\frac{D}{8(D-1)}(p^1+p^2)+\frac{3D-4}{8(D-1)}(p^3+p^4),\\
            c-ru^3&=\frac{3D-4}{8(D-1)}(p^1+p^2)+\frac{D}{8(D-1)}(p^3+p^4).
        \end{align*}
        Touching points: $ (1-\frac{D}{2})u^1$, $-(1-\frac{D}{2})u^1$.\\
        Outer normals: $u^1$, $-u^1$.\\

 \textit{Configuration 2:}\\
    Vertices: 
        \begin{equation*}
             p^1=\begin{pmatrix}
                1-\frac{D}{2} \\ 1-\frac{D}{2} \\ D-1
            \end{pmatrix}, \quad
             p^2=\begin{pmatrix}
               -( 1-\frac{D}{2}) \\ -(1-\frac{D}{2}) \\ D-1
            \end{pmatrix} \quad
             p^3=\begin{pmatrix}
                -(D-1) \\ 1-\frac{D}{2} \\ -(1-\frac{D}{2})
            \end{pmatrix}, \quad
             p^4=\begin{pmatrix}
               1-\frac{D}{2} \\ -(1-\frac{D}{2}) \\ -(D-1)
            \end{pmatrix}.
        \end{equation*}
          Inradius: $r=1-\frac{D}{2}$.\\
        Incenter: $c=(  \frac{(3D-4)(D-2)}{2D}, 0,\frac{(3D-4)(D-1)}{D} )^{\top}$.\\
        Containment: 
        \begin{align*}
            c+ ru^1&=\frac{1}{2}p^1+\frac{3D-4}{2D}p^2+ \frac{2-D}{D}p^4,\\
            c- ru^1&=\frac{(3D-4)^2}{2D^2}p^1+\frac{1}{2}p^2+ \frac{4(2-D)(D-1)}{D^2} p^3,\\
             c+ru^2&=\frac{5D^2-12D+8}{D^2}p^1+\frac{4(D-1)(2-D)}{D^2}p^3,\\
          c-ru^2&=\frac{2(D-1)}{D}p^2+\frac{2-D}{D}p^4,\\
           c+ru^3&=\frac{7D^2-20D+16}{2D^2}p^1+\frac12 p^2+\frac{(2-D)(3D-4)}{D^2}p^3,\\
            c-ru^3&=\frac{3D-4}{2D}p^1+\frac{3D-4}{2D}p^2+ \frac{2-D}{D}p^3+\frac{2-D}{D}p^4.
        \end{align*}
        Touching points: $c+ (1-\frac{D}{2}) u^2$, $c- (1-\frac{D}{2})u^2$. \\
        Outer normals: $u^2$, $-u^2$.\\

 \textit{Configuration 3:}\\
    Vertices:
        \begin{equation*}
             p^1=\begin{pmatrix}
                1-\frac{D}{2} \\ 1-\frac{D}{2} \\ D-1
            \end{pmatrix}, \quad
             p^2=\begin{pmatrix}
               -( 1-\frac{D}{2}) \\ -(1-\frac{D}{2}) \\ D-1
            \end{pmatrix}, \quad
             p^3=\begin{pmatrix}
                -(1-\frac{D}{2}) \\ D-1 \\ -(1-\frac{D}{2})
            \end{pmatrix}, \quad
             p^4=\begin{pmatrix}
                D-1 \\ -(1-\frac{D}{2}) \\ -(1-\frac{D}{2})
            \end{pmatrix}.
        \end{equation*}
         Inradius: $r=\frac{D(2-D)}{4-D}$.\\
        Incenter: $c=(-\frac{(3D-4)(D-2)}{8-2D}, -\frac{(3D-4)(D-2)}{8-2D}, \frac{3D-4}{4-D} )^{\top}$.\\
        Containment: 
        \begin{align*}
            c+ ru^1&=\frac{D}{4-D}p^1+\frac{4-2D}{4-D}p^4,\\
            c- ru^1&=\frac{D}{4-D}p^2+\frac{4-2D}{4-D}p^3,\\
             c+ ru^2&=\frac{D}{4-D}p^1+\frac{4-2D}{4-D}p^3,\\
            c-ru^2&=\frac{D}{4-D}p^2+\frac{4-2D}{4-D}p^4,\\
             c+ ru^3&=\frac{D}{4-D}p^1+\frac{4-2D}{4-D}p^2,\\
             c- ru^3&=\frac{3D-4}{4-D}p^2 +\frac{2(2-D)}{4-D}p^3 +\frac{2(2-D)}{4-D}p^4.
        \end{align*}
        Touching points: $c+ \frac{D(2-D)}{4-D}u^1$,  $c- \frac{D(2-D)}{4-D}u^1$, $c+ \frac{D(2-D)}{4-D}u^3$. \\ 
        Outer normals:$(D,D,5D-8)^{\top}$, $(-1,-1,-1)^{\top}$, $u^3$.\\

 \textit{Configuration 4:}\\
    Vertices:
        \begin{equation*}
             p^1=\begin{pmatrix}
                1-\frac{D}{2} \\ 1-\frac{D}{2} \\ D-1
            \end{pmatrix}, \quad
             p^2=\begin{pmatrix}
               -( 1-\frac{D}{2}) \\ -(1-\frac{D}{2}) \\ D-1
            \end{pmatrix}, \quad
             p^3=\begin{pmatrix}
                -(D-1) \\ 1-\frac{D}{2} \\ -(1-\frac{D}{2})
            \end{pmatrix}, \quad
             p^4=\begin{pmatrix}
                D-1 \\ -(1-\frac{D}{2}) \\ -(1-\frac{D}{2})
            \end{pmatrix}.
        \end{equation*}
         Inradius: $r=1-\frac{D}{2}$.\\
        Incenter: $c=( 0 , 0, \frac{3D-4}{2} )^{\top}$.\\
        Containment: 
        \begin{align*}
            c+ ru^1&=\frac{1}{2}p^1+\frac{3D-4}{2D}p^2+ \frac{2-D}{D}p^4,\\
            c- ru^1&=\frac{3D-4}{2D}p^1+\frac{1}{2}p^2+ \frac{2-D}{D} p^3,\\
             c+ ru^2&=\frac{2(D-1)}{D}p^1+ \frac{2-D}{D}p^3,\\
            c-ru^2&=\frac{2(D-1)}{D}p^2+\frac{2-D}{D}p^4,\\
             c+ ru^3&=\frac{1}{2}(p^1+p^2),\\
             c-ru^3&=\frac{3D-4}{2D}p^1+\frac{3D-4}{2D}p^2+\frac{2-D}{D}p^3+\frac{2-D}{D}p^4.
        \end{align*}
        Touching points: $c+ (1-\frac{D}{2})u^2$, $c- (1-\frac{D}{2})u^2$. \\ 
        Outer normals: $u^2$, $-u^2$.\\

 \textit{Configuration 5:}\\
    Vertices:  
        \begin{equation*}
             p^1=\begin{pmatrix}
                1-\frac{D}{2} \\ 1-\frac{D}{2} \\ D-1
            \end{pmatrix}, \quad
             p^2=\begin{pmatrix}
               -( 1-\frac{D}{2}) \\ -(D-1) \\  1-\frac{D}{2}
            \end{pmatrix}, \quad
             p^3=\begin{pmatrix}
                -( 1-\frac{D}{2}) \\ D-1 \\ -(1-\frac{D}{2})
            \end{pmatrix}, \quad
             p^4=\begin{pmatrix}
               D-1 \\ -(1-\frac{D}{2}) \\ -( 1-\frac{D}{2})
            \end{pmatrix}.
        \end{equation*}
           Inradius: $r=\frac{D(7D^2-20D+16)}{2(11D^2-24D+16)}$.\\
        Incenter: $c=\frac{3D-4}{11D^2-24D+16}( 2D^2-5D+4,\frac{-(3D-4)(2-D)}{2},  2D^2-5D+4)^{\top}$.\\

        Containment: 
        \begin{align*}
           c+ ru^1 &=\frac{D^2}{11D^2-24D+16}p^1+\frac{10D^2-24D+16}{11D^2-24D+16}p^4,\\
           c- ru^1 &=\frac{3D^2-4D}{11D^2-24D+16}p^1+\frac{D^2}{11D^2-24D+16}p^2+\frac{7D^2-20D+16}{11D^2-24D+16}p^3,\\
        c+ ru^2 &=\frac{D^2}{11D^2-24D+16}p^1+\frac{7D^2-20D+16}{11D^2-24D+16}p^3+\frac{3D^2-4D}{11D^2-24D+16}p^4,\\
           c-ru^2 &=\frac{3D^2-4D}{11D^2-24D+16}p^1+\frac{D^2}{11D^2-24D+16}p^2+\frac{7D^2-20D+16}{11D^2-24D+16}p^4,\\
            c+ ru^3 & =\frac{4D^2-4D}{11D^2-24D+16}p^1+\frac{4D-2D^2}{11D^2-24D+16}p^2+\frac{9D^2-24D+16}{11D^2-24D+16}p^4,\\
            c-ru^3 &=\frac{3D^2-4D}{11D^2-24D+16}p^2+\frac{7D^2-20D+16}{11D^2-24D+16}p^3+\frac{D^2}{11D^2-24D+16}p^4.
        \end{align*}
     
        Touching points: $c-ru^1$,  $c+ru^2$,  $c-ru^2$,  $c-ru^3$. \\
        Outer normals: $(\frac{1}{D-2},\frac{4-2D}{5D^2-12D+8},\frac{4D-4}{5D^2-12D+8})^{\top}$, $(D,D,5D-8)^{\top}$, $(1,-1,1)^{\top}$, $(1,1,\frac{2D-2}{2-D})^{\top}$. \\
     
         \textit{Configuration 6:}\\
 Vertices:
        \begin{equation*}
            p^1=\begin{pmatrix}
                1-\frac{D}{2}  \\ 1-\frac{D}{2} \\ D-1
            \end{pmatrix}, \quad
             p^2=\begin{pmatrix}
               -(1-\frac{D}{2}) \\ -(D-1) \\ 1-\frac{D}{2}
            \end{pmatrix}, \quad
             p^3=\begin{pmatrix}
                -(1-\frac{D}{2}) \\ D-1 \\ -(1-\frac{D}{2})
            \end{pmatrix}, \quad
             p^4=\begin{pmatrix}
                1-\frac{D}{2} \\ -(1-\frac{D}{2}) \\ -(D-1)
            \end{pmatrix}.
        \end{equation*}
          Inradius: $r=1-\frac{D}{2}$.\\
        Incenter: $c=(0,0,0)^{\top}$.\\
  Containment: 
  \begin{align*}
      c+ru^1&=\frac{1}{2}(p^1+p^4),\\
       c-ru^1&=\frac{1}{2}(p^2+p^3),\\
        c+ru^2&=\frac{7D^2-20D+16}{4(5D^2-12D+8)} p^1 + \frac{(4-3D)^2}{4(5D^2-12D+8)}p^2\\&+\frac{D^2}{4(5D^2-12D+8)}p^3+\frac{D(3D-4)}{4(5D^2-12D+8)}p^4,\\
         c-ru^2&=\frac{D(3D-4)}{4(5D^2-12D+8)} p^1 + \frac{D^2}{4(5D^2-12D+8)}p^2\\&+\frac{(4-3D)^2}{4(5D^2-12D+8)}p^3+\frac{7D^2-20D+16}{4(5D^2-12D+8)}p^4,\\
  \end{align*}
  \begin{align*}
            c+ru^3&=\frac{D^2}{4(5D^2-12D+8)} p^1 + \frac{7D^2-20D+16}{4(5D^2-12D+8)}p^2\\&+\frac{D(3D-4)}{4(5D^2-12D+8)}p^3+\frac{(4-3D)^2}{4(5D^2-12D+8)}p^4,\\
            c-ru^3&=\frac{(4-3D)^2}{4(5D^2-12D+8)}p^1+\frac{D(3D-4)}{4(5D^2-12D+8)}p^2\\ &+\frac{7D^2-20D+16}{4(5D^2-12D+8)}p^3+\frac{D^2}{4(5D^2-12D+8)}p^4.    
  \end{align*}
        Touching points: $ (1-\frac{D}{2})u^1$, $-(1-\frac{D}{2})u^1$.\\
        Outer normals: $u^1$, $-u^1$.\\
\end{proof}

\end{document}